\numberwithin{equation}{section}
\DeclareFontFamily{OT1}{pzc}{}
\DeclareFontShape{OT1}{pzc}{m}{it}{<-> s*[1.200]pzcmi7t}{}
\DeclareMathAlphabet{\mathscr}{OT1}{pzc}{m}{it}
\newcommand{\R}{\mathbb{R}}
\newcommand{\C}{\mathbb{C}}
\newcommand{\wh}[1]{\widehat{#1}}
\newcommand{\eps}{\varepsilon}
\newcommand{\supp}{\text{supp}}
\newcommand{\HHH}{\mathcal{H}}
\newcommand{\EEE}{\mathbf{E}}
\newcommand{\Z}{\mathbb{Z}}
\newcommand{\qq}{\mathbf{q}}
\newcommand{\indic}{1\!\!1}
\newcommand{\zzz}{\mathbf{z}}
\newcommand{\vvv}{\mathbf{v}}
\renewcommand{\O}{\mathcal{O}}
\newcommand{\mmu}{\bm{\mu}}
\newcommand{\TT}{\mathbf{T}}
\newtheorem{theorem}{Theorem}[section]
\newtheorem{prop}[theorem]{Proposition}
\newtheorem*{prop_sans_no}{Main Proposition}
\newtheorem{lemma}[theorem]{Lemma}
\newtheorem{coro}[theorem]{Corollary}
\newtheorem{proposition}[theorem]{Proposition}
\theoremstyle{definition}
\newtheorem{definition}[theorem]{Definition}
\newtheorem{remark}[theorem]{Remark}
\theoremstyle{remark}
\begin{document}

\title[Endline null form estimates]{Sharp null form estimates on endline geometric conditions of the cone}
\author[J. Yang]{Jianwei Urbain Yang${}^\dag$}
\email{jw-urbain.yang@bit.edu.cn}
\address{Jianwei Urbain Yang, 
Department of Mathematics, Beijing Institute of Technology, Beijing 100081, P. R. China}

\thanks{$^\dag$Department of Mathematics, Beijing Institute of Technology. The author is  supported by NSFC grant No. 11901032 and Research fund program for young scholars of Beijing Institute of Technology.}

\date{}

\subjclass[2010]{42B15, 35L05, 42B37}

\keywords{Null form estimates,  Wave equations, Endline mixed norm}

\begin{abstract}
	We prove $\HHH^{\alpha_1}\times\HHH^{\alpha_2}\to L^q_tL^r_x$ null form estimates for solutions to homogeneous wave equations with  $(q,r)$ on the endline of the condition concerning geometry of the cone, except the critical index. 
	This extends the previous endpoint result of
	\emph{Tao},  Math. Z. 238,  no. 2, 215--268, (2001)  in symmetric-norms  to mixed-norms  and	improves the local-in-time result of \emph{Tataru}, MR1979953, to be global in the  setting of  constant variable coefficient equations, as well as the sharp off-endline 
    estimates established by \emph{Lee} and \emph{Vargas},
    Amer. J. Math
    130 (2008), no. 5, 1279--1326, to the borderline  with respect to the cone condition. 
	Our proof is based on the
     multiplier theory in mixed-norms,
    which ultimately reduces the question to 	
    a \emph{uniform} endline bilinear  restriction estimates
	including high-low frequency interactions for a family of conic type surfaces depending on a  parameter $\sigma$, which
	converges to the oblique cone as $\sigma\to 0$.
	We prove this uniform estimate by using 
	the enhanced version of the induction-on-scale method. 
\end{abstract}

\maketitle

\section{Introduction}\label{S:introduction}
\subsection{Motivation and background}
Let $n\ge 2$ be a fixed integer and 
$\phi$, $\psi$ be solutions of the
homogeneous wave equation in $\R^{1+n}=\R_t\times\R^n_x$
\begin{equation}
\label{eq:wq}
\Box \phi=0,\;\quad\Box \psi=0,\;\quad (\Box=-\partial_t^2+\Delta_x,\;t\in\R,\;x\in\R^n)
\end{equation}
subject to the initial conditions at $t=0$
$$
\phi[0]:=\bigl(\phi(0,\cdot),\partial_t\phi(0,\cdot)\bigr)
=(\phi_0,\phi_1),\quad
\psi[0]:=\bigl(\psi(0,\cdot),\partial_t\psi(0,\cdot)\bigr)
=(\psi_0,\psi_1)\,.
$$
For $\alpha_1,\alpha_2\in\R$,  define
$$
\bigl\|\phi[0]\bigr\|_{\HHH^{\alpha_1}}
=\Bigl(\|\phi_0\|_{\dot{H}^{\alpha_1}}^2+\|\phi_1\|^2_{\dot{H}^{\alpha_1-1}}\Bigr)^{\frac{1}{2}},\;
\bigl\|\psi[0]\bigr\|_{\HHH^{\alpha_2}}
=\Bigl(\|\psi_0\|_{\dot{H}^{\alpha_2}}^2+\|\psi_1\|^2_{\dot{H}^{\alpha_2-1}}\Bigr)^{\frac{1}{2}},
$$
where $\dot{H}^\alpha=\dot{H}^\alpha(\R^n)$ is the homogeneous $L^2-$based Sobolev space of order $\alpha$.\medskip

Let $D_0, D_+,D_-$ be the Fourier multipliers defined by
$$
\widetilde{D_0 f}(\tau,\xi)
=|\xi|\, \widetilde{f}(\tau,\xi),
$$
$$
\widetilde{D_+ f}(\tau,\xi)
=\bigl( |\tau|+|\xi|\bigr)\,\widetilde{f}(\tau,\xi),
$$
$$
\widetilde{D_- f}(\tau,\xi)
=\bigl| |\tau|-|\xi|\bigr|\,\widetilde{f}(\tau,\xi),
$$
where $\,\widetilde{\cdot}\,$ denotes the space-time 
Fourier transform in  sense of distributions on $\R^{1+n}$ and 
 $(\tau,\xi)$ are the Fourier variables corresponding to $(t,x)$.\medskip
 
 We are interested in the validity of the null form estimates 
 for $\phi$ and $\psi$
 \begin{equation}
 \label{eq:NF}
 \bigl\| D_0^{\beta_0} D_+^{\beta_+} D_-^{\beta_-} (\phi\psi)\bigr\|_{L^q_t(\R;L^r_x(\R^n))}
 \le C\,\bigl\|\phi[0]\bigr\|_{\HHH^{\alpha_1}}\, \bigl\|\psi[0]\bigr\|_{\HHH^{\alpha_2}},
 \end{equation} 
 for some finite constant $C$,
 with $1\le q,r\le \infty$ such that 
 the following conditions on $(q,r,\alpha_1,\alpha_2,\beta_0,\beta_+,\beta_-)$ 
 are imposed:\medskip
 
$\bullet$ Scaling invariance
\begin{equation}
\label{eq:LV-2}
\beta_0+\beta_++\beta_-=\alpha_1+\alpha_2+\frac{1}{q}-n\Bigl(1-\frac{1}{r}\Bigr),
\end{equation}

$\bullet$ Geometry of the cone
\begin{equation}
\label{eq:LV-3}
\frac{1}{q}\le \frac{n+1}{2}\Bigl(1-\frac{1}{r}\Bigr),\;\;\;\;
\frac{1}{q}\le\frac{n+1}{4},
\end{equation}

$\bullet$ Concentration along null directions
\begin{equation}
\label{eq:LV-4}
\beta_-\ge \frac{1}{q}-\frac{n-1}{2}\Bigl(1-\frac{1}{r}\Bigr),
\end{equation}

$\bullet$ Low frequencies in $(++)$ interaction
\begin{equation}
\label{eq:LV-5}
\beta_0\ge \frac{1}{q}-n\Bigl(1-\frac{1}{r}\Bigr),
\end{equation}
\begin{equation}
\label{eq:LV-6}
\beta_0\ge \frac{2}{q}-(n+1)\Bigl(1-\frac{1}{r}\Bigr),
\end{equation}
\begin{equation}
\label{eq:LV-15}
\beta_0\ge \frac{2}{q}-n\Bigl(1-\frac{1}{r}\Bigr)-\frac{1}{2},
\end{equation}

$\bullet$ Low frequencies in $(+-)$ interaction
\begin{equation}
\label{eq:LV-7}
\alpha_1+\alpha_2\ge \frac{1}{q},
\end{equation}
\begin{equation}
\label{eq:LV-8}
\alpha_1+\alpha_2\ge \frac{3}{q}-n\Bigl(1-\frac{1}{r}\Bigr),
\end{equation}

$\bullet$ Interaction between high and low frequencies 
\begin{equation}
\label{eq:LV-9}
\alpha_1,\alpha_2\le \beta_-+\frac{n}{2},
\end{equation}
\begin{equation}
\label{eq:LV-10}
\alpha_1,\alpha_2\le \beta_-+\frac{n}{2}-\frac{1}{q}+\frac{n-1}{2}\Bigl(\frac{1}{2}-\frac{1}{r}\Bigr),
\end{equation}
\begin{equation}
\label{eq:LV-11}
\alpha_1,\alpha_2\le \beta_-+\frac{n}{2}-\frac{1}{q}+n\Bigl(\frac{1}{2}-\frac{1}{r}\Bigr),
\end{equation}
\begin{equation}
\label{eq:LV-12}
\alpha_1,\alpha_2\le \beta_-+\frac{n}{2}-\frac{1}{q}+n\Bigl(\frac{1}{2}-\frac{1}{r}\Bigr)+\Bigl(\frac{1}{2}-\frac{1}{q}\Bigr),
\end{equation}
\begin{equation}
\label{eq:LV-13}
\alpha_1,\alpha_2\le \beta_-+\frac{n+1}{2}-\frac{1}{q},
\end{equation}
\begin{equation}
\label{eq:LV-14}
\alpha_1,\alpha_2\le \beta_-+\frac{n}{2}-\frac{1}{q}+n\Bigl(\frac{1}{2}-\frac{1}{r}\Bigr)+\frac{1}{r}-\frac{1}{q},
\end{equation}

$\bullet$ Extra conditions for low frequencies of $(+-)$ interactions in low dimensions
\begin{equation}
\label{eq:LRV-1.16}
\alpha_1+\alpha_2\ge \frac{3}{q}+\frac{1}{r}-2,\quad n=3\,,
\end{equation}
\begin{equation}
\label{eq:LRV-1.17}
\alpha_1+\alpha_2\ge \frac{3}{q}+\frac{1}{2r}-\frac{5}{4},\quad n=2\,.
\end{equation}

That the conditions \eqref{eq:LV-2}-\eqref{eq:LV-6} 
and \eqref{eq:LV-7}-\eqref{eq:LV-12} are necessary 
for \eqref{eq:NF} to hold was described first 
by Foschi and Klainerman \cite{FoKl}, and 
it is conjectured  by the authors \cite{FoKl} that 
for $n\ge 2$ and $1\le q,r\le\infty$, 
these conditions should also be sufficient\footnote{Conjecture 14.16 in \cite{FoKl}.}.
The condition \eqref{eq:LV-4} is 
related to Lorentz invariance of the cone \cite{FoKl,TV-2}. 
In \cite{LeeVargas}, Lee and Vargas observed that 
in order for \eqref{eq:NF} to be true, \eqref{eq:LV-13}\eqref{eq:LV-14} and \eqref{eq:LV-15} 
are needed as well. 
Moreover, by testing a more refined example 
in the low frequency interactions for the $(+-)$ case, 
Lee, Rogers and Vargas \cite{LeeRogersVargas} proved that
when $n=3,2$, \eqref{eq:NF} requires an additional conditions
\eqref{eq:LRV-1.16} and \eqref{eq:LRV-1.17} respectively.\smallskip

It is convenient to classify the conditions \eqref{eq:LV-2}-\eqref{eq:LRV-1.17} into three categories. 
We reserve the same terminology for  \eqref{eq:LV-2} and \eqref{eq:LV-3}, to which we shall refer as 
the first and second categories.  
We shall call \eqref{eq:LV-3} 
the \emph{cone condition} below for brevity. 
The conditions \eqref{eq:LV-4}-\eqref{eq:LRV-1.17} 
are packed  up as the third category, 
to which we refer as the \emph{multiplier conditions}.
\smallskip

 In the case when $q=r=2$, Foschi and Klainerman \cite[Theorem 1.1]{FoKl} has determined the exact conditions on $(\alpha_1,\alpha_2,\beta_0,\beta_+,\beta_-)$ for estimates \eqref{eq:NF} to be true in the $L^2_{t,x}(\R^{1+n})$ norm for all $n\ge 2$.

For  cases  beyond 
 bilinear-$L^2_{t,x}$ setting, 
the null form estimate 
encircles in some sense 
the bilinear Fourier restriction estimates 
for surfaces of disjoint conic type. 
For frequency localized waves
without Fourier multipliers,  bilinear estimates  with $q=r<2$ 
in the two dimensional case 
was first established by Bourgain  \cite{Bo95}  
under  separateness
of the frequency variables. 
This is a genuine bilinear restriction estimate on the cone,
 a remarkable  progress towards 
the conjecture of Klainerman and Machedon  \cite{FoKl,Wolff99}. 
The result was improved later by 
Tao, Vargas and Vega \cite{TVV} and 
Tao-Vargas \cite{TV-1,TV-2}, 
which shapes  nowadays the 
standard bilinear method to the restriction and 
Kakeya conjectures, see \cite{DeBook} for more investigations.
The first sharp result is obtained by Wolff \cite{Wolff99} 
on the cone, except the endpoint case, 
which is  settled by Tao \cite{TaoMZ}, 
by building up the wave-table theory  and exploiting energy concentrations in the physical space along
 null directions of the cone.  
Extensions of this result to the mixed-norms  on the endline $\frac{2}{q}=(n+1)\Bigl(1-\frac{1}{r}\Bigr)$
in  \eqref{eq:LV-3} with the exception of the critical index  $(q_c,r_c)$, which by definition reads
\begin{equation}
\label{eq:cri-ind}
q_c=\max\Bigl(1,\frac{4}{n+1}\Bigr),\quad r_c=\min\Bigl(2,\frac{n+1}{n-1}\Bigr),
\end{equation}
is obtained by Temur \cite{Temur}, 
improving the previous off-endline result 
due to Lee and Vargas \cite{LeeVargas}
to the borderline case. 
Note that the critical index is out of reach 
by the current method\footnote{We conjecture the two dimensional case should be available.}. 
Indeed, it shares a level of the same difficulty 
with the endpoint multilinear restriction conjecture of Bonnett-Carbery-Tao \cite{BCT}, 
a very difficult open question. 
Even as a weaker result, 
the endpoint multilinear Kakeya inequality 
can only be established through the intricate 
algebraic topological method by Guth \cite{guth}. 
A much simpler version of the proof is given by 
Carbery and Valdimarsson \cite{CaV}.
We find the endpoint Fourier restriction theorems become more interesting recently. We refer to \cite{GiMe} for its connexion with the  unrectifiability of measures and \cite{HKL,FPV,Hab} for their role in the Calder\'on problem.
\smallskip

The null form estimates \eqref{eq:NF} 
in $q=r\ge 2$ with nontrivial multipliers is first established by  Klainerman and Tataru \cite{KT}, while
the  $q=r<2$ case first  by Tao and Vargas \cite{TV-2}. 
More refined $L^p$-null form results in symmetric norms 
with optimal conditions were established by Tao \cite{TaoMZ}, based
on the endpoint  bilinear restriction theorem on the light cone
including the interaction between high-low frequencies.  
In \cite{LeeVargas}, Lee and Vargas proved 
the sharp null form estimates under the scaling
condition \eqref{eq:LV-2} and  that
\eqref{eq:LV-3}-\eqref{eq:LV-14}  
are all valid with strict inequalities for all $n\ge 4$, 
and for $n=2,3$ with a gap  
$\frac{4}{n+1}\le q\le \frac{4}{n}$ when $2<r\le \infty$. 
This gap is supplied  by Lee, Rogers and Vargas \cite{LeeRogersVargas} when $n=3$ 
incorporated with the new  condition \eqref{eq:LRV-1.16}, 
with strict inequality, 
whereas only a partial result 
was obtained in \cite{LeeRogersVargas} 
for the two dimensional case, 
under an apparently stronger condition $\alpha_1+\alpha_2>\frac{3}{q}-1$ when $r>2$.
This is related to an open problem concerning the endpoint bilinear Strichartz estimate for one-dimensional Schr\"odinger equations.\medskip

Note that all of the above results 
concerning \eqref{eq:NF} are established  
in the spacetime norms $L^q_tL^r_x$ 
with $(q,r)$ being \emph{off} the endline 
of the cone condition \eqref{eq:LV-3} and that 
all the multiplier conditions are assumed to hold 
with strict inequalities, only
except that in \cite[Theorem 17.3]{TaoMZ}, 
an endpoint result was obtained for symmetric norms $q=r\ge\frac{n+3}{n+1}$, 
and in this case, the low frequency 
$(++)$ interaction condition is allowed to be taken as an equality. 
This result was extended to mixed-norms  
on the endline by Tataru \cite{Tataru} for  
second order hyperbolic operators 
with rough coefficients, where the estimates are 
obtained locally in time. 
One of the difficulties for the global result in mixed-norms  on the endline
is the obstacle of using  Lorentz invariance. \medskip

The purpose of this paper is to prove 
the null form estimates \eqref{eq:NF} for all $(q,r)$ 
on the endline of the cone condition \eqref{eq:LV-3} except the critical index $(q_c,r_c)$, 
with  the low  frequency 
$(++)$ interaction condition  allowed to take  equality, and all the other relevant sharp multiplier conditions 
hold  with strict inequalities. 

\begin{theorem}
	\label{thm:-main}
	Let $n\ge 2$ and $1\le q,r\le\infty$.
	Assume the following conditions hold
	\begin{equation}
	\label{eq:LV-2-el}
	\beta_0+\beta_++\beta_-=
	\alpha_1+\alpha_2-q^{-1}\Bigl(\frac{n-1}{n+1}\Bigr),
	\end{equation}
	\begin{equation}
	\label{eq:LV-3-el}
	\frac{1}{q}= \frac{n+1}{2}\Bigl(1-\frac{1}{r}\Bigr),\;\;\;\;
	\frac{1}{q}<\min\Bigl(1,\frac{n+1}{4}\Bigr),
	\end{equation}
	\begin{equation}
	\label{eq:LV-6-el}
	\beta_0\ge 0,
	\end{equation}
	\begin{equation}
	\label{eq:LV-4-el}
	\beta_-> \frac{2}{q(n+1)},
	\end{equation}
	\begin{equation}
	\label{eq:LV-8-el}
	\alpha_1+\alpha_2> q^{-1}\Bigl(\frac{n+3}{n+1}\Bigr),
	\end{equation}
	\begin{equation}
	\label{eq:LV-11-el}
	\alpha_1,\alpha_2< \beta_-+q^{-1}\Bigl(\frac{n-1}{n+1}\Bigr),
	\end{equation}
	\begin{equation}
	\label{eq:LV-12-el}
	\alpha_1,\alpha_2< \beta_-+\frac{1}{2}-\frac{2}{q(n+1)}.
	\end{equation}
	Then, there is a constant $C$ depending possibly on $(q,r,\alpha_1,\alpha_2,\beta_0,\beta_+,\beta_-)$ such that  \eqref{eq:NF} holds for all 
	$(\phi[0],\psi[0])\in
	\HHH^{\alpha_1}(\R^n)
	\times\HHH^{\alpha_2}(\R^n)$.
\end{theorem}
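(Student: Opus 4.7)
The plan is to reduce Theorem~\ref{thm:-main} to a \emph{uniform} endline bilinear Fourier restriction estimate by means of mixed-norm multiplier theory, and then to prove that restriction estimate via an enhanced induction-on-scale argument.

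First I would perform a spatio-temporal Littlewood--Paley decomposition of both $\phi$ and $\psi$, and split the product $\phi\psi$ according to the relative position of the two input frequency supports on the cone, separating the $(++)$ interaction (both waves on the same component, output in the interior of the light cone) from the $(+-)$ interaction (waves on opposite components). The multiplier conditions \eqref{eq:LV-6-el}--\eqref{eq:LV-12-el} are designed precisely to absorb the powers of $D_0,D_+,D_-$ appearing after dyadic summation in each regime. Using multiplier theory adapted to mixed $L^q_tL^r_x$ norms, one can convert each dyadic bilinear estimate into a bilinear restriction estimate for the extension operators associated with pieces of the cone. Since $\beta_0\ge 0$ is allowed as an equality, no $\varepsilon$-loss in the high-frequency scale can be tolerated; this is exactly the defining feature of being on the endline. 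In the $(+-)$ regime, after a parabolic rescaling and an affine tilt one is no longer on the symmetric cone but rather on a family of ``oblique'' conic surfaces parametrized by $\sigma\in(0,1]$ converging as $\sigma\to 0$ to the truly oblique cone.

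The core of the proof is then the uniform bilinear restriction estimate: for this family of conic surfaces one needs the sharp Tao-type endline bound with a constant uniform in $\sigma$ and, crucially, covering the high-low frequency separation regime. I would attack it through an enhanced version of Tao's induction-on-scale and wave-table argument: decompose the surface into caps of size $R^{-1/2}$, construct a wave-table that quantifies energy concentration along null directions of the surface, and close the induction by combining this with a bilinear Kakeya-type transversality input. The enhancement consists in adapting caps, wave-tables, and the Whitney decomposition to the anisotropic geometry of the $\sigma$-surfaces, so that every geometric constant is tracked and remains bounded as $\sigma\to 0$.

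The hard part will be this uniform endline estimate, for three intertwined reasons: (i) being precisely at the endpoint leaves no room for logarithmic slack in the scale $R$; (ii) covering the high-low regime without Lorentz invariance, which is the tool used in \cite{TaoMZ} to collapse this case to the balanced one but is unavailable in genuine mixed norms; (iii) controlling every implicit constant as the surface degenerates when $\sigma\to 0$. To deal with (ii) I would replace Lorentz boosts by a quantitative, scale-dependent localization argument combined with an anisotropic Whitney decomposition adapted to the oblique geometry; to handle (iii) I would keep the $\sigma$-dependence explicit throughout the induction so that no constant blows up in the limit. The irreducible obstruction at the critical index $(q_c,r_c)$ is expected, mirroring the endpoint multilinear Kakeya/restriction barrier already noted in the introduction.
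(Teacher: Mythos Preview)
Your outline is essentially the paper's own strategy: reduce via Littlewood--Paley and mixed-norm multiplier theory to a uniform endline bilinear restriction estimate for a $\sigma$-family of conic surfaces, then prove that estimate by adapting Tao's wave-table induction-on-scale with all constants tracked uniformly in $\sigma$. Two small corrections will sharpen your plan. First, the oblique $\sigma$-family does not arise specifically from the $(+-)$ regime; it comes from the Lee--Vargas angular rescaling used to handle small angular separation in \emph{both} $(++)$ and $(+-)$ interactions (the Main Proposition covers both cases simultaneously). Second, your explanation of why $\beta_0\ge 0$ is delicate is off: the $\mu^\varepsilon$ loss in the high-low regime is in fact tolerated (it is absorbed by the strict inequalities \eqref{eq:LV-11-el}--\eqref{eq:LV-12-el}), and $\beta_0$ only enters in the low-low $(++)$ case where $\mu\sim 1$; there the equality $\beta_0=0$ is handled by observing that $D_0^{\beta_0}D_+^{-\beta_0}$ is a Mihlin--H\"ormander multiplier bounded on $L^q_tL^r_x$ for $1<q,r<\infty$, together with a Klainerman--Tataru argument for the very-low-output piece, while the endpoint $(q,r)=(\infty,1)$ is treated separately by a Coifman--Meyer paraproduct bound.
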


	On the endline of  \eqref{eq:LV-3}, 
	only a partial components of the necessary conditions 
	in \eqref{eq:LV-2}-\eqref{eq:LRV-1.17} that matter. 
	We selected them for sake of reading. The strict inequalities \eqref{eq:LV-4-el}--\eqref{eq:LV-12-el} are responsible for the summability on various of dyadic geometric series.
	In certain off-endline cases 
	such as $(q,r)=(2,2)$ and $(\infty,2)$, 
	the condition associated to  interactions of  low frequencies in the $(++)$ case  being strict inequality is  also
	\emph{necessary} \cite{FoKl}. 
	Moreover, in one of these two cases, 
	certain special choices of $(\alpha_1,\alpha_2,\beta_-)$ 
	on the equalities of some of the multiplier conditions 
	are known to be \emph{inadmissible}.
\smallskip

	The strict inequalities in \eqref{eq:LV-4-el} and \eqref{eq:LV-8-el} are used in the proof  to tackle the fake
	singularity  by angular compression, which is related to
    the cone property (c.f. Example 14.3 and Example 14.9 of \cite{FoKl}). It is employed as a stepstone for
  the bilinear Whitney  type dyadic decomposition along the angular direction incorporated with a rescaling argument.
  This strategy is of course rather crude.
	To push   these conditions to equalities,   
   one may have to prove certain bilinear square-function estimates with respect to 
	angular decompositions.
\smallskip

Theorem \ref{thm:-main} extends the  null form estimates 
at the endpoint in the symmetric norm  of Tao \cite{TaoMZ} 
to the whole endline apart from the critical index $(q_c,r_c)$, 
with the low-frequency interaction (++) condition \eqref{eq:LV-6-el}
being able to attain the equality. 
It also extends the local-in-time estimates of 
Tataru \cite{Tataru} to be global-in-time 
for the constant coefficient wave equations. 
In particular, it can be added to the picture of
sharp null form estimates 
off  the   endline, attributed to Lee-Vargas \cite{LeeVargas} and Lee-Rogers-Vargas \cite{LeeRogersVargas}, where the condition on $\beta_0$ was required to  hold with strict inequality.

\subsection{Endline bilinear estimates for a family of conic type surfaces} 
To prove Theorem \ref{thm:-main}, 
we adopt a hybrid  of the  
dyadic decomposition in the frequency space 
introduced  in \cite{TV-2} and \cite{TaoMZ,LeeVargas}, 
which allows to  reduce \eqref{eq:NF}  
 to a \emph{uniform} endline bilinear  restriction
estimates in Theorem \ref{pp:sigma-0}  below for a family of conic type surfaces depending on a small parameter $\sigma>0$.

This {uniform} bilinear estimate
is  interesting in its own right. 
As a by-product,
we investigate by the end of this paper
 how one can combine
its proof  with the method of descent to get the endpoint  bilinear restriction estimates on the unit sphere, an open question raised by Foschi and Klainerman \cite[Conjecture 17.2
]{FoKl}, where the sharp non-endpoint 
case can be included into the elliptic type surfaces and has already been settled by Tao \cite{TaoGFA}.
Unlike the question on  paraboloids  in \cite{Y22}, one has to control the effects of certain perturbations on the phase functions.
\smallskip

To state our uniform bilinear estimates, we first introduce some notations.
 For each small number $\sigma>0$, let  $\mathscr{E}_{\sigma}$ be the   class of $\R-$valued functions  $\Phi_\sigma$ defined on  $\R^n$ satisfying the following conditions:
\begin{itemize}
	\item[-] $\Phi_\sigma\in C^\infty(\R^n\setminus\{0\})$  is homogeneous of order one,
	\item[-] $\Phi_\sigma (\mathbf{0},1)=0$, $\;\nabla_{\xi'}\Phi_\sigma(\mathbf{0},1)=\mathbf{0}\in\R^{n-1},\; \nabla_{\xi'}^2\Phi_\sigma(\mathbf{0},1)={\rm Id}_{\R^{n-1}}$,
	\item[-] rank $\nabla_{\xi'}\Phi_\sigma(\xi',\xi_n)=n-1$, for all $\xi\in\R^n\setminus\{0\}$
\end{itemize}
 with $\xi:=(\xi',\xi_n)\in\R^{n-1}\times\R$,
such that on  $\mathcal{ N}=\{\xi\in\R^n\,:\,|\xi'|\le3 |\xi_n|\}$  and for any multi-indices $\alpha=(\alpha_1,\ldots,\alpha_n)$
with  $ \alpha_j\in\Z\cap [0,\infty)$ $\forall j$, there exists a finite constant $C=C_{\alpha}>0$ depending only on $n$ and $\alpha$ for which we have
\begin{equation}
\label{eq:cond-error-ctrl}
\Bigl|\partial^\alpha_\xi\Bigl(\Phi_\sigma(\xi)-\frac{|\xi'|^2}{2\xi_n}\Bigr)\Bigr|\le C\, \sigma^{2}\,|\xi|^{1-|\alpha|},\;\forall\; \xi\in\mathcal{ N}\cap (\R^n\setminus \{0\}),
\end{equation}
where  $|\alpha|:=\alpha_1+\cdots+\alpha_n$.\smallskip

Let $\Omega \subset\mathcal{ N}$ be a compact set away from the origin and
define 
$$
S_{\Omega}^{\Phi_\sigma}:f\mapsto \int_\Omega e^{i(x\cdot\xi+t\Phi_\sigma(\xi))}\wh{f}(\xi)\,d\xi\,,
$$
initially on Schwartz functions $\mathcal{S}(\R^n)$.
Then, $S^{\Phi_\sigma}_{\R^n}=e^{it\Phi_\sigma(D)}$
with $D=-i\nabla_x$.
This corresponds to the adjoint Fourier restriction operator\footnote{The Stein operator.} on
 the conic surface $$\mathcal{ C}^{\Phi_\sigma}_{\Omega}:=\{(\Phi_\sigma(\xi),\xi):\xi\in\Omega\}\subset\R^{1+n},$$ a Monge patch given
 by the graph of the function $\Phi_\sigma$ over $\Omega$. 
 For brevity, we shall refer to the given $\Phi_\sigma$ as a \emph{Monge function}.

\begin{theorem}
	\label{pp:sigma-0}
	Let $n\ge 2$ and $\varSigma_1,\varSigma_2$ be defined as 
	$$
	\varSigma_j=\{\xi\in\R^n\,:\,1\le (-1)^{j-1} \xi_{n-1},\,
	\xi_n\le 2,\, |\xi''|\le 1\},\quad j=1,2,
	$$
	where $\xi=(\xi'',\xi_{n-1},\xi_n)\in\R^{n-2}\times\R\times\R$.
	Then, there exists  $\sigma_0>0$, such that for any $\eps>0$, and  $(q,r)$  satisfying  
	$$
	\frac{1}{q}<\min\Bigl(1,\frac{n+1}{4}\Bigr),\quad
	\frac{1}{q}=\frac{n+1}{2}\Bigl(1-\frac{1}{r}\Bigr)\,,
	$$
    with	$1\le q,r\le\infty$, 
	there exists $C=C_{\varSigma_1,\varSigma_2,q,r,\sigma_0,\eps}$ such that 
	\begin{equation}
	\label{eq:B-sig}
	\bigl\|\bigl(S^{\Phi_\sigma}_{\varSigma_1} f\bigr)\bigl(S^{\Phi_\sigma}_{\mu\varSigma_2} g\bigr)\bigr\|_{L^q_t(\R;L^r_x(\R^n))}
	\le C\, \mu^{\max\bigl(\frac{1}{q}-\frac{1}{2},0\bigr)+\eps}\,
	\|f\|_{L^2}\|g\|_{L^2},
	\end{equation}
	holds
	for all $\mu\ge 1$,  $f, g\in \mathcal{ S}(\R^n)$
	and all $\Phi_\sigma\in\mathscr{E}_\sigma$, $\forall\,\sigma\in(0,\sigma_0]$.
\end{theorem}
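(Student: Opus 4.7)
The plan is to adapt the wave-table/induction-on-scale method of Tao to simultaneously accommodate (i) the mixed-norm endline exponents $(q,r)$, (ii) the high-low frequency parameter $\mu\ge 1$, and (iii) uniformity in $\sigma\in(0,\sigma_0]$. The induction parameter is the spacetime scale $R$ to which I localize the output of the bilinear operator. After a parabolic rescaling adapted to the geometry of $\Phi_\sigma$ near the base point $(\mathbf{0},1)$, the claim reduces to proving, for every large dyadic $R$, a constant $C(R,\eps,\mu)$ that grows at most like $\mu^{\max(1/q-1/2,0)+\eps}$ in $\mu$ and is uniform in $\sigma$.

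The first major step is to set up a wave-packet decomposition adapted to $\Phi_\sigma$: I cover $\varSigma_1$ by caps of radius $R^{-1/2}$ and $\mu\varSigma_2$ by caps of radius $(R\mu^{-1})^{-1/2}$ at the ambient scale, and decompose $f=\sum_T f_T$, $g=\sum_{T'}g_{T'}$ into wave packets supported on tubes whose axes are the normals to $\mathcal{ C}^{\Phi_\sigma}$ at the corresponding cap centres. The decay estimate \eqref{eq:cond-error-ctrl} guarantees that the Gauss map of $\mathcal{ C}^{\Phi_\sigma}$ is an $O(\sigma^2)$-perturbation of the Gauss map of the paraboloid on $\mathcal{ N}$; consequently the $L^2$ almost-orthogonality, tube-localization, and spatial concentration constants for the wave packets are uniform in $\sigma\in(0,\sigma_0]$ provided $\sigma_0$ is chosen sufficiently small.

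Each tube-tube interaction $(T,T')$ is then classified by its transversality. The diagonal separation $|\xi_{n-1}|\ge 1$ between $\varSigma_1$ and $\varSigma_2$ yields a uniform lower bound on the transversality angle. For the transverse pairs I sum using Cauchy--Schwarz together with the sharp endline linear Stein--Tomas bound for $\mathcal{ C}^{\Phi_\sigma}$; on the endline $1/q=(n+1)(1-1/r)/2$ the scaling closes exactly, and the required linear bound is inherited from the paraboloid by stability under the $C^\infty$-perturbation controlled by \eqref{eq:cond-error-ctrl}. For the tangential pairs I use the wave-table construction to replace each wave packet at scale $R$ by an ensemble of wave packets at scale $R^{1-\delta}$ and invoke the induction hypothesis, gaining a factor $R^{-c\delta}$ per step. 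The high-low factor $\mu^{\max(1/q-1/2,0)+\eps}$ emerges from an angular Whitney decomposition of $\varSigma_1$ into $\mu^{-1}$-caps followed by the rescaling that maps each $\mu^{-1}$-cap $\times\,\mu\varSigma_2$ pair to a unit-scale bilinear problem, combined with the scaling Jacobian and the $\sim\mu^{n-1}$ count of caps.

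The main obstacle is making these three ingredients cooperate: on the endline no H\"older or scaling step yields a free gain, so every loss must be absorbed either by the strict inequality $1/q<\min(1,(n+1)/4)$, which keeps us away from the critical index and yields geometric summability in the dyadic scale, or by the $\eps$ in $\mu^{\eps}$, which absorbs a logarithmic number of induction steps. The second delicate point is the uniformity in $\sigma$: the wave-table construction and the transversality analysis depend on quantitative bounds for the second fundamental form of $\mathcal{ C}^{\Phi_\sigma}$ and its derivatives, so I must verify at each step that the dependence on $\Phi_\sigma$ enters only through $C^k$ norms controlled uniformly on $\mathcal{ N}$ by \eqref{eq:cond-error-ctrl}. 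Once these points are secured, choosing $\delta=\delta(\eps)$ sufficiently small closes the induction and yields the announced bound \eqref{eq:B-sig}.
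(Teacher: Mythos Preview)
Your proposal has a genuine gap at the heart of the argument. You write that for the ``tangential'' pairs you will pass from scale $R$ to scale $R^{1-\delta}$ via the wave-table construction and ``gain a factor $R^{-c\delta}$ per step.'' That is the Wolff mechanism, and it is precisely what \emph{fails} on the endline $\frac{1}{q}=\frac{n+1}{2}(1-\frac{1}{r})$: once $(q,r)$ lies on this line there is no H\"older interpolation or scaling slack left to produce any power of $R^{-1}$. You seem aware of this (``no H\"older or scaling step yields a free gain''), but then you propose to absorb the loss using the strict inequality $\frac{1}{q}<\min(1,\frac{n+1}{4})$. That inequality only keeps $(q,r)$ away from the critical index $(q_c,r_c)$; it does \emph{not} move you off the endline, and it does not manufacture an $R^{-c\delta}$ gain. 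Its actual role in the paper is much more modest: it ensures $q>1$ so that the $\ell^q$ almost-orthogonality over disjoint subcubes (Lemma~\ref{lem:Te}) holds.

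What the paper does instead---and what your sketch is missing entirely---is Tao's energy-concentration dichotomy. One introduces the quantity $\EEE_{r,Q}(F^\sigma,G^\sigma)$ measuring simultaneous $L^2$ concentration of both waves on a disk of radius $r$, and runs a two-pronged argument. If the energy is \emph{not} concentrated at a certain scale $r_\delta$, the induction hypothesis applied through wave tables picks up a factor $\EEE_{r,C_0Q}(F^\sigma,G^\sigma)^{1/q'}\le(1-\delta)^{1/q'}$; this is where the universal constant strictly less than one comes from, and it requires $q>1$. If the energy \emph{is} concentrated on a small disk, one uses the $P_D^{\Phi_\sigma}$ localization operators and the Huygens-type propagation (Lemma~\ref{lem:conctr}) to show the bilinear output is essentially supported on a purple conic set $\mathbf{\Lambda}^{\Phi_\sigma}_{\mathsf{purple}}$, and then exploits that normals to $\mathcal{C}^{\Phi_\sigma}$ form a codimension-one set (the Kakeya null property) together with the opposite-colour energy estimate (Lemma~\ref{lem:opposite}). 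The wave tables are used not to drop to scale $R^{1-\delta}$ but to drop by a \emph{fixed} factor $2^{-C_0}$ while preserving the non-concentration information (Proposition~\ref{pp:persist}), and the ``no-waste'' bilinear $L^2$-Kakeya estimate (Proposition~\ref{pp:C-0 quilt}) replaces your Cauchy--Schwarz plus linear Stein--Tomas step. Also note that your transverse/tangential classification is moot here: $\varSigma_1$ and $\varSigma_2$ are already uniformly separated in $\xi_{n-1}$, so all tube pairs are transverse; the difficulty is purely the endpoint summation, not angular separation. Finally, the $\mu^{\eps}$ loss in the paper does not come from an angular Whitney decomposition of $\varSigma_1$ into $\mu^{-1}$-caps as you suggest, but from iterating the concentration argument $O(k)$ times with $\mu=2^k$; your proposed mechanism for the $\mu$-dependence would not close.
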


Theorem \ref{pp:sigma-0}  is a \emph{stability} result of the endline bilinear estimate for 
the oblique cone  $\mathcal{ C}_{\mathtt{obl}}:=\bigl\{(\frac{|\xi'|^2}{2\xi_n},\xi',\xi_n): 1\le \xi_n\le 2, |\xi'|\le 3\bigr\}$  under small perturbations as the conic type surfaces given by the graph of Monge functions in $\cup_{\sigma\in (0,\sigma_0]}\mathscr{E}_\sigma$ provided $\sigma_0$ is small enough. \smallskip

	The $\mu^\eps$-loss arises from the iterative argument for the essential concentration of the waves on conic regions. 
	It is responsible for the requirement 
	on   the  conditions \eqref{eq:LV-11-el}\eqref{eq:LV-12-el} 
    with strict inequalities in Theorem \ref{thm:-main}.   
    To push them  to equalities,
    one  has to 
    remove the $\mu^\eps$ loss first. 
   Once this having been done,  
    it is also necessary
    to establish certain vector-valued bilinear adjoint restriction estimates, as pointed out in \cite{TV-2,TaoMZ}, to replace the crude  argument in Section \ref{sec:2}.

	  The $\mu^\varepsilon$-loss is irrelevant to the condition $\beta_0\ge 0$, since \eqref{eq:LV-6-el}
	 will only be used in the low-frequency $(++)$ case where $\mu\sim 1$. 
	 This inequality is required to be strict  in the off-endline case \cite{LeeVargas} and it is allowed to be
	 taken as equality in the endpoint result  \cite{TaoMZ}. To get  the endline estimates \eqref{eq:NF} with all $\beta_0\ge 0$ for the $r>1$ case,
	 we shall combine an argument going back to Klainerman-Tataru \cite{KT}, also used in \cite{TV-2,LeeVargas}, with a Mihlin-H\"ormander multiplier estimate  to remove the   strict inequality  postulated in \eqref{eq:LV-6}.
	 The $L^\infty_tL^1_x-$case will be treated separately by  a classical $L^2\times L^2\to 
	L^1$ paraproduct estimate, easily deduced from the  Coifman-Meyer argument. Note that
	 this strict inequality condition on $\beta_0$ does not cause any
	 wastage for the sharp  results of \cite{LeeVargas} in the off-endline case. \medskip

Theorem \ref{pp:sigma-0} is
a \emph{uniform} endline bilinear restriction estimates 
for a family of surfaces of disjoint conic type 
satisfying the axioms of Tao and Vargas in \cite{TV-1,TV-2}, 
including high-low frequency interactions as  \cite{TaoMZ}. 
We imposed more restrictive conditions
here in order to simplify  the argument.
It is plausible that one may relax the $\mathscr{E}_\sigma$ conditions.
Notice that  even for the limiting case when $\sigma=0$,  \eqref{eq:B-sig} is highly nontrivial.
Indeed, \eqref{eq:B-sig} in symmetric norms  with $q=r=\frac{n+3}{n+1}$ on $\mathcal{ C}_{\mathtt{obl}}$ can be easily deduced  from Theorem 1.1 of \cite{TaoMZ} via  changing variables
$$
(x',x_n,t)\to \Bigl(x',\frac{x_n+t}{\sqrt{2}},\frac{x_n-t}{\sqrt{2}}\Bigr),
$$
with $
x=(x',x_n)\in\R^{n-1}\times\R$ and 
$
(\xi',\xi_n)\to\bigl(\xi',\sqrt{2}\,\psi(\xi',\xi_n)\bigr)
$
with
$$
\psi(\xi',\xi_n)=\frac{|\xi'|^2}{2\xi_n}-\xi_n,\quad \xi_n\ne 0.
$$
This  argument clearly  fails when $q\neq r$, 
and we have to give a direct proof of \eqref{eq:B-sig}  by adapting the induction method in \cite{TaoMZ}. 
\smallskip

The reason of  resorting 
to the  uniform bilinear estimates of Theorem \ref{pp:sigma-0} is to 
get over the obstruction in using the Minkowski-conformal invariance  of the cone under   $Conf^+(\R^{n+1})$ in mixed-norms.

This invariance employed in the symmetric-norms does not work in the mixed-norms when $q\ne r$, since  transferring 
between Euclidean and the null coordinates
is impossible, due to  intertwining of
the temporal and spatial variables 
simultaneously  via linear transformations.
To overcome this, Lee and Vargas \cite[$\S$4.2,  P.1313]{LeeVargas}
adopted a different way of rescaling,
leading naturally to  a family of operators, smoothly depending on the level of
angular separateness, and proved a uniform bilinear estimate  for these operators for all $(q,r)$ satisfying the cone condition \eqref{eq:LV-3}  with \emph{strict} inequalities.

Our proof is a spontaneity of this  approach  and 
Theorem \ref{pp:sigma-0} slightly
generalizes  the result for specific family of Fourier extention  operators in spirit of Lee-Vargas to Monge functions in the class $\mathscr{E}_\sigma$, not only with an enlarged class of surfaces but also including
the endline case  except the critical index $(q_c,r_c)$.

\subsection{Outline of the proof}
We briefly explain our proof of Theorem \ref{thm:-main}. 
By symmetry, we first reduce the null form estimates for solutions of wave equations to the $(++)$ and $(+-)$ cases associated to one-sided half waves in the 
canonical way as \cite{FoKl,TV-2,TaoMZ,LeeVargas,LeeRogersVargas}.

For the $L^\infty_tL^1_x-$case,  following  \cite{FoKl}, we  write  the products of the two waves into a bilinear operator and use  the translation-modulation invariance along with the Plancherel theorem  to reduce the estimate to a  bilinear 
$L^2(\R^n)\times L^2(\R^n)\to L^1(\R^n)$ multiplier estimate which is known from the Coifman-Meyer theory.

For each $(q,r)$ on the endline with $1<r<r_c$,
we shall adopt the standard Littlewood-Paley  decomposition, reducing  to the following three cases: 
$$\mathit{(a)}.\;\text{low-low } (++), \;\,\mathit{(b)}.\;\text{low-low } (+-), \; \, 
\mathit{(c)}.\;\text{high-low } (+\pm)  \,,$$
where the `low-low' is short for the low-low frequency interactions and the  `high-low' stands for the interactions of high-low frequencies.

 All of these cases can be handled under the conditions of Theorem \ref{thm:-main} by using the Main Proposition in Section \ref{sec:2} below. To handle Case $(a)$, we consider the interactions of the two waves creating relatively low and very low frenquencies. For the first one, we use the bilinear Whitney decomposition along the angular directions to deal with  the angular compressions where the transversality condition depends on the level of colinearity, to which we use a rescaling to the Main Proposition. For the second, we use  an argument of Klainerman-Tataru \cite{KT} to find  enough amount of transversality.   In both cases,  the condition $\beta_0\ge 0$ allows us to use a Mihlin-H\"ormander type multiplier estimate in the mixed norms \cite{AI}. To handle  Case $(b)$, besides of the bilinear Whitney in the angular directions,  we shall also make use of the decomposition along  null directions, due  to Lee and Vargas \cite{LeeVargas}. The  case $(c)$ includes the high-low frequency interactions, which can be handled in a unified way. We shall use the condition \eqref{eq:LV-11-el}  for $q\ge 2$ and \eqref{eq:LV-12-el} for $q<2$.
 To complete the proof, we  reduce the Main Proposition to Theorem \ref{pp:sigma-0} by using the non-isotropic rescaling of Lee-Vargas  \cite{LeeVargas}.
\medskip

The major part of the paper is devoted to Theorem \ref{pp:sigma-0}, for which we shall use the 
enhanced induction-on-scale method of Tao \cite{TaoMZ}. Instead of  dealing with a \emph{fixed} lightcone there, we get
a uniform estimate for a family of surfaces, fairly well-behaved thanks to the $\mathscr{E}_\sigma$ conditions for all $\sigma\in(0,\sigma_0]$, provided $\sigma_0$ fixed small.

Recall that  in \cite{TaoMZ}, there are two key ingredients
in proving the endpoint 
theorems for the lightcone.

The first one is the  propagation 
of waves  along null directions, or more precisely  Huygens' principle for wave equations, which allows to explore spatial localizations by means of an appropriate truncation operator which preserves the wave structure on the frequency space.
As in \cite{TaoMZ}, we shall use a microlocal version of this property.
Consequences of this property are two aspects.  One is the energy estimate on the conic regions of opposite colours, where the first use of  the transversality condition takes place. A crucial fact is the Kakeya null property, that the normal vector at any point of the cone are always in the lightray directions. In particular, the normal field  is a submanifold of the unit sphere $\mathbb{S}^n$ of codimension one.  This is used to tackle the case when the energy of the  waves simultaneously  concentrate in a small disk.  The other one is the essential concentration of waves in the physical spacetime along conic sets which allows to wrest  a universal constant strictly smaller than one.   

The second  ingredient is the wave table theory  based on a careful wave-packet decomposition and  is more technically involved. The use of wave tables draws on three novelties compared with  
 \cite{TaoGFA,Wolff99}.
The first one is  equipping an auxiliary small constant
 to the previous versions of \cite{TaoGFA} by an averaging argument, such that the small constant can be
flexibly chosen
to deal with the energy-concentrated case by using iteration.
The second novelty is that in the construction of wave tables, one takes in more geometric information of interactions between two waves of opposite colours in the physical spacetime, especially that the wave table for a wave is constructed with respect to the other wave of the opposite colour, carefully modifying the initial data by using a linear transformation,  depending on the opposite colour wave, of the plain wave-packets in the tube summations.  This along with separated supports of quilts allows 
to get a pigeonhole-free version of the bilinear $L^2-$Kakeya type estimate, removing the otherwise logarithmic loss, which is acceptable in the non-endpoint case  incorporated with the $\eps-$removal lemma, however would block the approach to the endpoint estimates.
The third one is concerning the products of the two waves, of
which one is localized to high frequency and the wave table for the low frequency wave can be localized in the physical spacetime to quilts  at a depth reciprocal to the high frequency,  consistent with the uncertainty principle.

Based mainly on the above two innovations, the induction on scale method of Wolff \cite{Wolff99} is  enhanced by Tao  \cite{TaoMZ} to get the endpoint bilinear restriction estimate and our task is to verify this strategy for a familiy of conic type surfaces. To this end, we rely on the exploitation of the gain
in non-concentration of energy analogous to the analysis 
for Proposition 3.6 in \cite{TaoMZ} in order to bring in a universal small constant strictly less than one. To get the 
uniform estimate for the family of conic surfaces in question, 
we find that in this step, which only plays an auxiliary role,  it suffices to apply the  argument as in \cite{Y22} for any fixed $\Phi_\sigma$.
\medskip

The paper is organized as follows. In Section \ref{sec:2}, we  prove Theorem \ref{thm:-main} by reducing it  to Theorem \ref{pp:sigma-0} via Littlewood-Paley theory involving spacetime multipliers in mixed-norms,
The proof of Theorem \ref{pp:sigma-0} will be carried out through Section 3 to Section 5.  In Section 6, we  invest a possible way of combining the method 
of descent with the argument for Theorem \ref{pp:sigma-0} to get  the endpoint case of the Foschi-Klainerman conjecture on the bilinear Fourier restriction estimate on the unit sphere.

\subsection*{Acknowledgements} 
The author is supported by NSFC grant No. 11901032 and Research fund program for young scholars of Beijing Institute of Technology.

\section{Proof of Theorem \ref{thm:-main}  via the Main Proposition }\label{sec:2}
In this section, 
we first reduce Theorem \ref{thm:-main} to 
the main proposition below
by using a hybrid  of the  dyadic decomposition in \cite{TV-2} and \cite{TaoMZ,LeeVargas}, which allows us to take the equality in \eqref{eq:LV-6-el}.
The main proposition is then proved
by using   a non-isotropic rescaling argument and Theorem \ref{pp:sigma-0}.

\subsection{Reduction to the main proposition}
We start with fixing some notations. For each $n\ge 2$, define the forward/backward cone as $\mathcal{C}^\pm=\{(\pm |\xi|,\xi)\,:\, \xi\in\R^n\}$.
Given $\Omega\subset\R^n$, we use
$\mathcal{C}^\pm(\Omega)=\{(\pm|\xi|,\xi):\xi\in\Omega\}$
to denote the \emph{lift} of $\Omega$ to $\mathcal{C}^\pm$.

A function $u:\R^{1+n}\to \mathbb{C}$ is said to be a $(+)$ \emph{wave} (resp. a $(-)$ \emph{wave}) if  $\widetilde{u}$ is an $L^2$-measure supported on $\mathcal{C}^+$ (resp. on  $\mathcal{C}^-$).

Let $u$ be a $(+)$ wave and $v$ be a $(-)$ wave.
The energy of $u$ and $v$ are defined as 
$$
\EEE(u)=\|u(t,\cdot)\|_2^2,\; \EEE(v)=\|v(t,\cdot)\|_2^2.
$$
By Plancherel's theorem, we see that
$\EEE(u)$, $\EEE(v)$ are independent of $t$.\smallskip

Fix  $\sigma\in (0,1)$ and write $\xi=(\xi'',\xi_{n-1},\xi_n)$ with $\xi''=(\xi_1,\ldots,\xi_{n-2})$. Define
$$
\Gamma^\pm_\sigma=\bigl\{(\tau,\xi'',\xi_{n-1},\xi_n)\,:\;\tau=\pm |\xi|, \,1\le \pm\xi_n\le 2,\; \sigma\le \xi_{n-1}<2\sigma,\,|\xi''|\le\sigma \bigr\}.
$$

In this subsection, we prove Theorem \ref{thm:-main}  
assuming the following result.

\begin{prop_sans_no}
	Let $n\ge 2$.
	For any $\eps>0$ and $1\le q,r\le\infty$ such that 
	$$
	\frac{1}{q}<\min\Bigl(1,\frac{n+1}{4}\Bigr),\quad
	\frac{1}{q}=\frac{n+1}{2}\Bigl(1-\frac{1}{r}\Bigr),
	$$
	there exists a finite constant $C=C_{\eps,q,r,n}$
	such that we have 
	\begin{equation}
	\label{eq:BL}
		\|uv\|_{L^q_t(\R;L^r_x(\R^n))}
	\le C \mu^{\max\bigl(\frac{1}{q}-\frac{1}{2},\,0\bigr)+\eps}\sigma^{-\frac{4}{q(n+1)}}\EEE(u)^{1/2} \EEE(v)^{1/2},
	\end{equation}
     for all $u$ and $v$ being either  $(+)$ and $(-)$ waves such that 
	\begin{equation}
	\label{eq:sup-1}
	\supp\;\widetilde{u}\subset \Gamma^+_\sigma,\;
	\supp\;\widetilde{v}\subset\,\mu\,\Gamma^-_\sigma,
	\end{equation}
	or being both $(+)$ waves such that
	\begin{equation}
	\label{eq:sup-2}
	\supp\;\widetilde{u}\subset \Gamma^+_\sigma,\;
	\supp\;\widetilde{v}\subset\,-\mu\,
	\Gamma^-_\sigma,
	\end{equation}
	for all dyadic number $\mu \ge 1$ and all $ \sigma\in (0,\,1)$.
\end{prop_sans_no}

\begin{remark}
	 When $q=\infty$ and $r=1$, we may take $\varepsilon=0$ in \eqref{eq:BL} by using the Cauchy-Schwarz inequality and energy estimates. In fact, the $\eps$-loss might be removed for $q\ge 2$ by using the same method for $(q,r)=(2,\frac{n+1}{n})$ and interpolate with $(q,r)=(\infty,1)$. We will not elaborate this.
\end{remark}

\begin{remark}
	\label{rk++}
	The method of the  proof for the case 
	\eqref{eq:sup-2} can be modified easily to yield the 
	same bilinear estimate \eqref{eq:BL} for $u$, $v$ being both $(+)$ waves with the  frequency supports condition that 
	$
	\supp\,\widetilde{u}\subset\Gamma^+_{\sigma}
	$
	and $\supp\,\widetilde{v}$ is contained in 
	$$
	\mu\bigl\{(\tau,\xi'',\xi_{n-1},\xi_n)\,:\;\tau= |\xi|, \,1\le -\xi_n\le 2,\; \sigma\le \xi_{n-1}<2\sigma,\,|\xi''|\le\sigma \bigr\}.
	$$
	In this case,  the factor  $\sigma^{-\frac{4}{q(n+1)}}$ on the right side of \eqref{eq:BL} may be  discarded.
\end{remark}

Assuming  the  main proposition,
we  prove  Theorem \ref{thm:-main}.
Let $\phi$ and $\psi$ be the solutions to  
\eqref{eq:wq} with initial data $\phi[0]$ and $\psi [0]$ respectively.
Denote 
\begin{equation}
\label{eq:phipsi}
\phi^\pm(t)=e^{\pm i tD_0}\phi_0^\pm,\;
\psi^\pm(t)=e^{\pm i tD_0}\psi_0^\pm,
\end{equation}
as the one-sided solutions (half-waves)
with 
$$
\phi_0^\pm=\frac{1}{2}\Bigl(\phi_0\pm (iD_0)^{-1}\phi_1\Bigr),\;\,
\psi_0^\pm=\frac{1}{2}\Bigl(\psi_0\pm (iD_0)^{-1}\psi_1\Bigr)\,.
$$
Then, we have
$\phi=\phi^++\phi^-$, $\psi=\psi^++\psi^-$ and
\begin{align*}
\bigl\|\phi[0]\bigr\|_{\HHH^{\alpha_1}}=&\,\sqrt{2}\,\Bigl(\| D_0^{\alpha_1}\phi_0^+\|_2^2+\| D_0^{\alpha_1}\phi_0^-\|_2^2\Bigr)^\frac{1}{2},\\
\bigl\|\psi[0]\bigr\|_{\HHH^{\alpha_2}}=&\,\sqrt{2}\,\Bigl(\| D_0^{\alpha_2}\psi_0^+\|_2^2+\| D_0^{\alpha_2}\psi_0^-\|_2^2\Bigr)^\frac{1}{2}.
\end{align*}
Writing $$\phi\psi=\phi^+\psi^++\phi^+\psi^-+\phi^-\psi^++\phi^{-}\psi^{-}$$
and  using the triangle inequality,
it is enough, after taking conjugation,
to prove the estimate \eqref{eq:NF}
with $\phi \psi$ being equal to $\phi^+\psi^+$ and $\phi^+\psi^-$,
to which we refer respectively as the $(++)$ and the $(+-)$ case.  
We are thus reduced to 
\begin{equation}
\label{eq:1st-redction}
\| D_0^{\beta_0} D_+^{\beta_+} D_-^{\beta_-}(\phi^+\psi^{\pm})\|_{L^q_t(\R;L^r_x(\R^n))}\le C
\| D_0^{\alpha_1}\phi_0^+\|_{L^2(\R^n)}\|D_0^{\alpha_2} \psi_0^{\pm}\|_{L^2(\R^n)},
\end{equation}
for  some constant $C=C_{\varTheta}$, which may depend on ${\varTheta}:=(q,r,\beta_0,\beta_+,\beta_-,\alpha_1,\alpha_2)$,  satisfying the conditions in Theorem \ref{thm:-main}.\smallskip

As \cite{TV-2}, it is convenient to recast \eqref{eq:1st-redction} 
into   $C^{\pm}_{\alpha_1,\alpha_2}(Q^1,Q^2,D)$ below.
\begin{definition}
	\label{def:2.2}
	Let $\alpha_1,\alpha_2\in\mathbb{R}$, 
	$Q^1,Q^2\subset \mathbb{R}^n$ and 
	$D$ be the Fourier multiplier on $\mathbb{R}^{1+n}$ 
	associated to a symbol $m(\tau,\xi)$. 
	For  $(q,r)\in[1,\infty]^2$ satisfying \eqref{eq:LV-3-el}, 
	define $C^\pm_{\alpha_1,\alpha_2}(Q^1,Q^2,D)$ 
	to be the best constant $C$ such that
	\begin{equation}
	\label{eq:defrr}
		\| D(\phi^+\psi^\pm)\|_{L^q_t(\R;L^r_x(\R^n))}\le C \|\phi_0^+\|_{\dot{H}^{\alpha_1}(\R^n)}
	\|\psi_0^\pm\|_{\dot{H}^{\alpha_2}(\R^n)}
	\end{equation}
	holds for all $\phi_0^+$,$\psi_0^\pm$ 
	whose Fourier transforms 
	are supported in $Q^1,Q^2$ respectively, where $\phi^+,\psi^\pm$ are given by \eqref{eq:phipsi}.
\end{definition}

The optimal constant $C^\pm_{\alpha_1,\alpha_2}$ 
depends  on the given exponents $(q,r)$. 
We suppress the  expression for this dependence  
in the notation for brevity.

With this terminology, 
\eqref{eq:1st-redction} can be written as 
\begin{equation}
\label{eq:2nd-reduction}
C^\pm_{\alpha_1,\alpha_2}(\R^n,\R^n, D_0^{\beta_0}D_+^{\beta_+}D_-^{\beta_-})
\lesssim_{\;{\varTheta}} 1\,.
\end{equation}

\begin{remark}
	As in \cite{TV-2}, 
	the bilinear restriction theorem on the cone \cite{Wolff99,TaoMZ,Temur} 
	can be reformulated as
	$C^\pm_{0,0}(Q^1,Q^2,1)\lesssim 1$ 
	for compact subsets $Q^1,Q^2\subset\R^n$ 
	being appropriately separated.
	Thus, it is reasonable to integrate the bilinear restriction theorem into the more general null form theory.
\end{remark}

\begin{remark}
	If we let  $^{\flat}{C}^\pm_{\;\alpha_1,\alpha_2}
	(\R^n,\R^n, D_0^{\beta_0}D_+^{\beta_+}D_-^{\beta_-})$ 
	have the same meaning
	 in
     Definition \ref{def:2.2} as the optimal constant of the estimate \eqref{eq:defrr}
    for $q=r=\frac{n+2}{n}$ with 
	$\alpha_1+\alpha_2=\frac{n}{n+2}$, 
	which satisfies  \eqref{eq:LV-3} in the one-dimensinally reduced endline condition,
    then
	the Machedon-Klainerman conjecture for the 
	Schr\"odinger equation in the $(n-1) $ dimension 
	is implied by the following null form estimate 
	\begin{equation}
	\label{eq:mfd}
	^\flat{C}^\pm_{\;\alpha_1,\alpha_2}(\R^n,\R^n, D_0^{\beta_0}D_+^{\beta_+}D_-^{\beta_-})\lesssim 1\,,
	\end{equation}
	for some suitable  $\beta_0,\beta_+,\beta_-$
	satisfying \eqref{eq:LV-2}. This 
    is called \emph{the method of descent}, introduced in   \cite[Proposition 17.5]{TaoMZ}.
\end{remark}
\begin{remark}
	The estimate \eqref{eq:mfd} remains open, while for the $\alpha_1+\alpha_2>\frac{n}{n+2}$ case, it is included  in a general result proved in \cite{LeeVargas}.
	Although a proof for   \eqref{eq:mfd} is not known,
	an adaptation of 
	the  method  in \cite{TaoMZ}
    yields an intermediate result weaker than \eqref{eq:mfd} but strong enough to imply  
	the endline estimates on paraboloids \cite{Y22}.
  For the endpoint estimate on the unit sphere, we need deal with perturbations on the phase functions as in Theorem \ref{pp:sigma-0}. 
\end{remark}

To show \eqref{eq:2nd-reduction}, we consider the case $q=\infty$ and $r=1$ first. Following \cite{FoKl}, let 
\begin{align*}
W_+(\eta,\zeta)=&\,
\frac{|\eta+\zeta|^{\beta_0}(|\eta|+|\zeta|)^{\beta_+}(|\eta|+|\zeta|-|\eta+\zeta|)^{\beta_-}}{|\eta|^{\alpha_1}|\zeta|^{\alpha_2}},\\
W_-(\eta,\zeta)=&\,
\frac{|\eta+\zeta|^{\beta_0+\beta_+}(|\eta+\zeta|-||\eta|-|\zeta||)^{\beta_-}}{|\eta|^{\alpha_1}|\zeta|^{\alpha_2}}\quad.
\end{align*}
Define the bilinear multiplier operators $B_{(++)}$ and $B_{(+-)}$ as
$$
B_{(+\pm)}(f,g)(x)=\iint_{\R^{2n}}e^{ix\cdot(\eta+\zeta)}W_\pm(\eta,\zeta) f(\eta )g(\zeta)\,d\eta d\zeta.
$$
By translation invariance and Plancherel's theorem, \eqref{eq:2nd-reduction} follows from 
\begin{equation}
\label{eq:bilinear}
\|B_{(+\pm)}\|_{L^2(\R^n)\times L^2(\R^n)\to L^1(\R^n)}\lesssim 1.
\end{equation}
Noting that $W_\pm$ is smooth on $\R^{2n}\setminus\{0\}$ and  homogeneous of order zero by \eqref{eq:LV-2-el},  the  classical  Coifman-Meyer method yields \eqref{eq:bilinear}. We omit the details.\medskip

Next, we consider the  $r>1$ case.
We shall write $\phi\psi$ 
standing for $ \phi^+\psi^+$ or $\phi^+\psi^-$, 
depending on being 
either in the $(++)$ or the $(+-)$ case.

For each $\lambda\in 2^{\Z}$, let $A_\lambda:=\{\xi\in\R^n:|\xi|\sim \lambda\}$. 
Using  Littlewood-Paley,
we write  
$$
\phi(t)=\sum_{\nu\in 2^\Z} \phi_\nu(t),\quad
\psi(t)=\sum_{\mu\in 2^\Z} \psi_\mu(t),
$$
where the partial Fourier transforms 
$\wh{\phi}_\nu(t)$, $\wh{\psi}_\mu(t)$  of $\phi_\nu(t),\psi_\mu(t)$ with respect to 
the spatial variables 
are supported respectively in $A_\nu, A_\mu$
for every $t$. 
For  \eqref{eq:2nd-reduction},
by using the Minkowski inequality, Schur's test and  symmetry, it suffices to show 
\begin{equation}
\label{eq:Rd-1}
C^\pm_{\alpha_1,\alpha_2}(A_\nu,A_\mu, D_0^{\beta_0}D_+^{\beta_+}D_-^{\beta_-})
\lesssim_{{\varTheta},\varepsilon} \Bigl(\frac{\mu}{\nu}\Bigr)^{-\varepsilon},
\quad\forall\,\mu\ge\nu\,,
\end{equation}
for some small $\eps>0$, 
with  ${\varTheta}$  in \eqref{eq:1st-redction}. 
Scaling by  \eqref{eq:LV-2-el},  we reduce \eqref{eq:Rd-1} to
\begin{equation}
\label{eq:Rd-2}
C^\pm_{\alpha_1,\alpha_2}(A_1, A_\mu, D_0^{\beta_0} D_+^{\beta_+}D_-^{\beta_-})\lesssim \mu^{-\varepsilon}\,,\quad \forall\,\mu\ge 1.
\end{equation}
Let $\mathbf{D}:=D_0^{\beta_0}D_+^{\beta_+}D_-^{\beta_-}$ 
and $\mathbf{m}=\mathbf{m}(\tau,\xi)$ be the symbol of the multiplier
$\mathbf{D}$, 
$$
\mathbf{m}(\tau,\xi)=|\xi|^{\beta_0}(|\tau|+|\xi|)^{\beta_+}
\bigl| |\tau|-|\xi|\bigr|^{\beta_-}.
$$
Then
$\widetilde{\mathbf{D}(\phi_1\,\psi_\mu)}(\tau,\xi)=\mathbf{m}(\tau,\xi)\;\bigl(\widetilde{\phi}_1*\widetilde{\psi}_\mu\bigr)(\tau,\xi)$  vanishes outside the set
\begin{equation}
\label{eq:set-m}
\supp (\mathbf{m})\cap\bigl\{(\,|\zeta|\pm |\eta|,\zeta+\eta\,):\zeta\in A_1,\eta \in A_\mu\bigr\}\,,
\end{equation}
depending on being in  the $(++)$ or the $(+-)$ case.
In the $(++)$ case, we may confine $\mathbf{m}$ 
inside the region $|\xi|\le \tau$ by triangle inequality, whereas in the $(+-)$ case, 
we may impose the condition $|\tau|\le |\xi|$.
\smallskip

Now, we introduce the $P_\pm$ operator as in \cite{TV-2} 
corresponding to the $(++)$ and $(+-)$ cases respectively. 

In the $(++)$ case, let $P_+$ be  a smooth multiplier with symbol  supported on $|\xi|\ll \tau$ and  write
$$
\mathbf{D}(\phi_1\psi_\mu)=
P_+ \,\mathbf{D}(\phi_1^+\psi_\mu^+)+(I-P_+)\, \mathbf{D}(\phi_1^+\psi_\mu^+),$$
where the first term on the right side vanishes unless $\mu\sim 1$. It is easy to see that  as long as the second term does not vanishes identically,  $(I-P_+)$ is frequency-localized  to the region $|\xi|\sim\tau$. 

In the $(+-)$ case, we have 
$|\tau|+|\xi|\sim |\xi|\lesssim \mu$. 
Letting $P_-$ be given by a smooth multiplier with frequency localized on $|\tau|+|\xi|\ll \mu$, we may  write
$$
\mathbf{D}(\phi_1\psi_\mu)=
P_- \,\mathbf{D}(\phi_1^+\psi_\mu^-)+(I-P_-)\, \mathbf{D}(\phi_1^+\psi_\mu^-),$$
where the first term on the right side vanishes unless $\mu\sim 1$, whereas the second one is truncated in the frequency space to the region $|\tau|+|\xi|\sim \mu$.

By  triangle inequalities, 
 \eqref{eq:Rd-2} is reduced  to 
\begin{align}
\label{eq:+++}
&C^+_{\alpha_1,\alpha_2}(A_1, A_\mu, P_+\,\mathbf{D})
\lesssim \mu^{-\varepsilon}\,,\\
\label{eq:---}
&C^-_{\alpha_1,\alpha_2}(A_1, A_\mu, P_-\,\mathbf{D})
\lesssim \mu^{-\varepsilon}\,,\\
\label{eq:-+-}
&C^\pm_{\alpha_1,\alpha_2}(A_1, A_\mu, (I-P_\pm)\,\mathbf{D})
\lesssim \mu^{-\varepsilon}\,,
\end{align}
uniformly for all $\,\mu\ge 1$.
The $(I-P_\pm)$ part corresponds to
 the high-low frequency interactions, which 
 can be handled in a unified way. \medskip

We prove 
\eqref{eq:+++}\eqref{eq:---} and \eqref{eq:-+-} using  the main proposition.\medskip

\noindent{\bf $\bullet$ Proof of \eqref{eq:+++}:} We have $\mu\sim 1$ otherwise $P_+\mathbf{D}=0$.  This corresponds to the low frequency interactions in the $(++)$ case. 
\medskip

Using $1<q,r<\infty$ and the  Mihlin-H\"ormander 
theorem for mixed-norms, the operator $D_0^{\beta_0} D_+^{-\beta_0}$
is bounded on $L^q_t(\R,L^r_x(\R^n))$, see for example \cite[Theorem 7]{AI}. Here, we have used  $\beta_0
\ge 0$ and that the symbol of the multiplier $D_0^{\beta_0} D_+^{-\beta_0}$ is smooth on $\R^{1+n}_{\tau,\xi}\setminus\{0\}$ and is  homogeneous of order zero. 
Therefore,  it suffices to show  \eqref{eq:+++} with $\beta_0=0$.\medskip

Without loss of generality, we may take $\mu=1$  after suitably modifying the structure constant if necessary as in \cite{TV-2}.

Fix a small constant $\gamma_0\in 2^\Z$ with $0<\gamma_0\ll 1$
and decompose
$$\{(\tau,\xi):|\xi|\lesssim 1\}=\bigcup_{\substack{\gamma_0\le\gamma\le 1\\ \gamma\;\text{dyadic}}}\Omega_\gamma$$ 
where $\Omega_\gamma:=\{(\tau,\xi):|\xi|\sim\gamma\}$ for each $\gamma>\gamma_0$ and $\Omega_{\gamma_0}=\{(\tau,\xi):|\xi|\le \gamma_0\}$.

Accordingly, decompose $P_+ $ by the  inhomogeneous Littlewood-Paley partition of the identity operator
$$
P_+=\sum_{\gamma_0\le \gamma\,\le\, 1} \Delta_\gamma,
$$
where $\Delta_\gamma=P_+\Delta_\gamma$ is the multiplier adapted to $\Omega_\gamma$ and $\Delta_\gamma\sim 1$ if $\gamma>\gamma_0$.
Using  Minkowski's inequality, we may reduce \eqref{eq:+++} (with $\mu=1$) to showing that 
\begin{equation}
\label{eq:+++2}
C^+_{\alpha_1,\alpha_2}(A_1,A_1,\Delta_\gamma D_+^{\beta_+} D_-^{\beta_-})\lesssim 1\,,\forall\;  \gamma\in [\gamma_0,1]\,.
\end{equation}

For each $\gamma>\gamma_0$ and each dyadic $\sigma$ with $0<\sigma\le\,1$, we decompose $A_1$
into finitely overlapping projective sectors $\varGamma$ with angle $\sigma$. By using the bilinear Whitney type decomposition (c.f. \cite{TVV,TV-1,LeeVargas}) and Schur's test, we may reduce \eqref{eq:+++2} to showing that there exist some small $\varepsilon>0$ such that
\begin{equation}
\label{eq:+++4}
C^+_{\alpha_1,\alpha_2}(\varGamma,\varGamma',
\Delta_\gamma D_+^{\beta_+}D_-^{\beta_-})\lesssim \sigma^{\varepsilon},
\end{equation}
holds uniformly for all $\varGamma,\varGamma'$ with angle $\sigma$ such that 
$\sphericalangle (\varGamma
,\varGamma')\sim \sigma$. Here, $\sphericalangle (\varGamma
,\varGamma')$ denotes the angular distance between $\varGamma$ and $\varGamma'$.

Write $\varGamma=-\Gamma\cup \Gamma$ with $\Gamma$, $-\Gamma$ being the antipode
components of $\varGamma$ and likewise for $\varGamma'=-\Gamma'\cup\Gamma'$, where the angle between $\Gamma,\Gamma'$ belongs to $(0,\frac{\pi}{2})$.
 
By triangle's inequality and symmetry,  \eqref{eq:+++4} 
follows from
\begin{equation}
\label{eq:+++4-gd}
C^+_{\alpha_1,\alpha_2}(\Gamma,\pm\Gamma',
\Delta_\gamma D_+^{\beta_+}D_-^{\beta_-})\lesssim \sigma^{\varepsilon},
\end{equation}

To prove \eqref{eq:+++4-gd}, we first consider  the 
 cases  $\gamma>\gamma_0$.
Discarding the harmless operators $\Delta_\gamma D_+^{\beta_+-\beta_-}\sim 1$, we are reduced to 
\begin{equation}
\label{eq:+++5}
C^+_{\alpha_1,\alpha_2}(\Gamma,\pm\Gamma', |\Box|^{\beta_-})\lesssim \sigma^{\varepsilon}
\end{equation}
where $|\Box|:=D_+D_-$.

To proceed, we recall the following technical lemma on the multiplier estimates for $|\Box|$
 taken from \cite{LeeVargas}.

\begin{lemma}
	\label{lem:LV-2.6}
	Let $\beta_-\in\C$. Suppose either that $u$ and $v$ are $(+)$ and $(-)$ waves with \eqref{eq:sup-1} or that both $u$ and $v$ are $(+)$ waves such that \eqref{eq:sup-2} holds.
	Then, for all $1\le q,r\le\infty$, $\mu\ge 1$ and $\sigma\in (0,1)$, we have for any $N$, there is a finite constant $C_N$ independent of $\beta_-$ such that
	\begin{multline*}
	\quad	\||\Box|^{\beta_-}(uv)\|_{L^q_tL^r_x}\le C_N (1+|\beta_-|)^N(\mu\sigma^2)^{\mathbf{Re}(\beta_-)}\\
	\times	\sum_{(\Bbbk_1,\Bbbk_2)\in \Z^n\times\Z^n}(1+|\Bbbk_1|+|\Bbbk_2|)^{-N}\|u_{\Bbbk_1}v_{\Bbbk_2}\|_{L^q_tL^r_x},\quad
	\end{multline*}
	where for each $(\Bbbk_1,\Bbbk_2)\in\Z^n\times\Z^n$, $u_{\Bbbk_1}$ and $v_{\Bbbk_2}$ are $(+)$ and $(+)$/$(-)$ waves, having the same support property
	in the frequency space with $u,v$ and satisfying that
	$$
	\EEE(u_{\Bbbk_1})=\EEE(u),\quad \EEE(v_{\Bbbk_2})=\EEE(v).
	$$
\end{lemma}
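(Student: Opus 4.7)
The plan is to exploit the fact that on the spacetime frequency support of $\widetilde{uv}$, the symbol $\bigl||\tau|-|\xi|\bigr|^{\beta_-}$ is essentially constant of magnitude $(\mu\sigma^2)^{\mathbf{Re}(\beta_-)}$, and to convert the remaining smooth factor into a rapidly convergent series of spatial translations of $u$ and $v$ via a Fourier expansion.

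First, I would pin down the size of $\bigl||\tau|-|\xi|\bigr|$ on $\supp(\widetilde{u}*\widetilde{v})$. Any point in this support has the form $(\tau,\xi)=(|\xi_1|+\varepsilon|\xi_2|,\xi_1+\xi_2)$ with $\varepsilon=\pm1$ according to whether $v$ is a $(-)$ or a $(+)$ wave, and with $\xi_1\in\Gamma^+_\sigma$ and $\xi_2\in\varepsilon\mu\,\Gamma^-_\sigma$. Expanding $|\xi|^2=|\xi_1|^2+2\,\xi_1\cdot\xi_2+|\xi_2|^2$ and using the defining inequalities of $\Gamma^\pm_\sigma$ (which force the angle between $\xi_1$ and the nearest parallel direction to $\xi_2$ to be comparable to $\sigma$, while $|\xi_1|\sim 1$ and $|\xi_2|\sim\mu$), one finds $\bigl||\tau|-|\xi|\bigr|\sim\mu\sigma^2$ with smooth dependence on $(\xi_1,\xi_2)$ throughout the relevant region.

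Next, I factor out this scale and write
\[
\bigl||\tau|-|\xi|\bigr|^{\beta_-}=(\mu\sigma^2)^{\beta_-}\,m_{\sigma,\mu}(\xi_1,\xi_2;\beta_-),
\]
where $m_{\sigma,\mu}$ is smooth and, after the anisotropic rescaling that turns the frequency supports of $u$ and $v$ into $O(1)$ sets (dilating by $\sigma^{-1}$ in the transverse directions and $1$ in $\xi_n$ for $u$, and by $(\mu\sigma)^{-1}$ and $\mu^{-1}$ for $v$), has all derivatives $O(1)$ uniformly in $\sigma\in(0,1)$ and $\mu\ge 1$. The bounds on the derivatives grow at most polynomially, as $(1+|\beta_-|)^N$ for derivatives of order $N$, by the complex power rule $\partial^\alpha z^{\beta_-}=\beta_-(\beta_--1)\cdots z^{\beta_--|\alpha|}$. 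I would then extend $m_{\sigma,\mu}$ to a smooth compactly supported function on a product of unit cubes in the rescaled variables via a fixed cutoff and expand it in a Fourier series:
\[
m_{\sigma,\mu}(\xi_1,\xi_2;\beta_-)=\sum_{\Bbbk_1,\Bbbk_2\in\Z^n}c_{\Bbbk_1,\Bbbk_2}(\beta_-)\,e^{i\Bbbk_1\cdot\widetilde{\xi}_1+i\Bbbk_2\cdot\widetilde{\xi}_2},
\]
with $\widetilde{\xi}_j$ the rescaled frequencies. Integration by parts using the derivative bounds above gives $|c_{\Bbbk_1,\Bbbk_2}(\beta_-)|\lesssim_N (1+|\beta_-|)^N(1+|\Bbbk_1|+|\Bbbk_2|)^{-N}$.

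Finally, each plane-wave factor $e^{i\Bbbk_j\cdot\widetilde{\xi}_j}$ is a modulation on the Fourier side in $\xi_j$ and thus corresponds on the physical side to a spatial translation of the initial data of $u$ or $v$, producing waves $u_{\Bbbk_1}$ and $v_{\Bbbk_2}$ with identical frequency supports and identical energies. Substituting the expansion into $|\Box|^{\beta_-}(uv)$ and applying the triangle inequality in $L^q_tL^r_x$ yields the claimed bound. The main technical obstacle I anticipate is the uniform smoothness of $m_{\sigma,\mu}$ in the rescaled coordinates, together with the accurate tracking of the $(1+|\beta_-|)^N$ factor across repeated differentiation; once these are in hand, the Fourier-series step and its translation interpretation are essentially routine.
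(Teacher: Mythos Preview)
Your approach is correct and is exactly the standard argument (the paper does not prove this lemma but simply cites it from Lee--Vargas \cite{LeeVargas}, where the proof proceeds precisely by factoring out the constant scale of the symbol and Fourier-expanding the residual smooth bilinear multiplier into spatial modulations). One small slip to fix: the symbol of $|\Box|=D_+D_-$ is $\bigl|\tau^2-|\xi|^2\bigr|=(|\tau|+|\xi|)\bigl||\tau|-|\xi|\bigr|$, not $\bigl||\tau|-|\xi|\bigr|$; on the support one has $|\tau|+|\xi|\sim\mu$ and $\bigl||\tau|-|\xi|\bigr|\sim\sigma^2$, whose product gives the claimed $\mu\sigma^2$, so your size computation needs this extra factor rather than asserting $\bigl||\tau|-|\xi|\bigr|\sim\mu\sigma^2$ directly.
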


By using Lemma \ref{lem:LV-2.6} and \eqref{eq:LV-4-el}, we may reduce \eqref{eq:+++5} with $\eps$ small enough to
\begin{equation}
\label{eq:+++6}
C^+_{\alpha_1,\alpha_2}(\Gamma,\pm\Gamma',1)\lesssim \sigma^{-\frac{4}{q(n+1)}},
\end{equation}
which, 
after a suitable spatial rotation,
follows from \eqref{eq:BL} of the Main Proposition
and Remark \ref{rk++}.
\smallskip

It remains to consider  the $\Delta_{\gamma_0}$ term.
In this case, instead of using Lemma \ref{lem:LV-2.6} and the bilinear Whitney, we shall use an argument as  in   \cite{KT,TV-2}.
Divide $A_1$ into essentially disjoint cubes $Q$
of sidelength $\gamma_0$. Let $m_{\gamma_0}$ be the symbol of $\Delta_{\gamma_0}$ and denote $B(0,R)$ as the ball of radius $R>0$ centered at the origin of $\R^n$. We write $Q\sim  Q'$ if there exists $\zeta\in Q, \eta\in Q'$ such that $m_{\gamma_0}(|\zeta|+|\eta|,\zeta+\eta)\ne0$. Then, there is a universal constant $C$ such that $Q\sim Q'$ only when $Q+Q'\subset B(0,C\gamma_0)$. By taking $\gamma_0$ smaller if necessary, $Q\sim Q'$ only when $Q$ and $Q'$ are almost opposite. 
Consequently,  we have
$$
\sup_{Q}\,{\rm Card}\{Q':Q\sim Q'\}+\sup_{Q'}\,{\rm Card}\{Q:Q\sim Q'\}\lesssim \,1.
$$
By using Schur's test, we may reduce \eqref{eq:+++2} to 
\begin{equation}
\label{eq:+++3}
C^+_{\alpha_1,\alpha_2}(Q,Q',\Delta_{\gamma_0}D_+^{\beta_+}D_-^{\beta_-})\lesssim \,1\,,
\end{equation}
uniformly for $\gamma_0$-cubes $Q\sim Q'$.

Noting that 
$D_+\sim D_-\sim 1$, we have by Bernstein's inequality
\begin{align*}
	C^+_{\alpha_1,\alpha_2}(Q,\,Q',\,\Delta_{\gamma_0}
	D_+^{\beta_+}D_-^{\beta_-})
	\lesssim C^+_{\alpha_1,\alpha_2} (Q,\, Q',\, 1).
\end{align*}
Thus, it suffices to show
\begin{equation}
\label{eq:+++3-gf}
C^+_{\alpha_1,\alpha_2}(Q,\,Q',\,1)\lesssim \,1,
\end{equation}
uniformly for $\gamma_0$-cubes $Q\sim Q'$.

By using rotation in the spatial variable, we may place the sets $Q$ and $Q'$ as  in Remark \ref{rk++} so that \eqref{eq:+++3-gf} follows.
 The proof of \eqref{eq:+++} is thus complete.
\smallskip

\noindent{\bf $\bullet$ Proof of \eqref{eq:---}:} We are in the low-low frequency interaction case with $\mu\sim1$.
Moreover, we have $D_0\sim D_+$. Hence by
the standard multiplier estimate (see \cite{AI}), we may reduce \eqref{eq:---} to
\begin{equation}
\label{eq:---1}
C^-_{\alpha_1,\alpha_2}(A_1,A_1,P_-D_+^{\beta_0+\beta_+}D_-^{\beta_-})\lesssim 1\,.
\end{equation}
%where we have taken $\mu=1$ without loss of generality.

Decompose
$\{(\tau,\xi):|\tau|+|\xi|\lesssim 1\}=\cup_{\gamma>0}\;\Omega_\gamma^+$, where for each  $\gamma\in(0,1]\cap 2^\Z$, 
$$
\Omega_\gamma^+:=\{(\tau,\xi):|\tau|+|\xi|\sim \gamma\}.
$$
For each $\gamma$, partition
$
\Omega_\gamma^+=\cup_{0<\lambda\le \gamma}\; \Omega^{-}_{\lambda,\gamma}
$ with $\lambda$ being dyadic, and 
$$
\Omega_{\lambda,\gamma}^-=\{(\tau,\xi)\in\Omega_\gamma^+: \bigl||\tau|-|\xi|\bigr|\sim \lambda\}.
$$
Let $\Delta_\gamma^+$ and $\Delta_{\lambda,\gamma}^-$
be the smooth multipliers adapted to $\Omega_\gamma^+$
and $\Omega_{\lambda,\gamma}^-$ respectively such that we may write
$$
P_-D_+^{\beta_0+\beta_+} D_-^{\beta_-}=\sum_{\substack{0<\gamma\,\le\, 1\\ \gamma\;\text{dyadic}}}\;
\sum_{\substack{0<\lambda\,\le\gamma\\ \lambda\;\text{dyadic}}}\gamma^{\beta_0+\beta_+}
\lambda^{\beta_-}\;\Delta_\gamma^+\Delta_{\lambda,\gamma}^-\,.
$$
By  Minkowski's inequality, \eqref{eq:---1} is reduced to showing for some $\varepsilon>0$ 
\begin{equation}
\label{eq:---2}
C_{\alpha_1,\alpha_2}^-(A_1, A_1, \Delta_\gamma^+\Delta_{\lambda,\gamma}^{-})
\lesssim \gamma^{-(\beta_0+\beta_++\beta_-)+\varepsilon}
\Bigl(\frac{\lambda}{\gamma}\Bigr)^{-\beta_-+\varepsilon}\,,
\end{equation}
holds uniformly for all $\lambda\le \gamma\le 1$.

Let $m^{+-}_{\lambda,\gamma}(\tau,\xi)$ be the symbol of 
$\Delta_\gamma^+\Delta_{\lambda,\gamma}^-$ such that $m^{+-}_{\lambda,\gamma}$ is a smooth bump function adapted to $\{(\tau,\xi):|\xi|+|\tau|\sim \gamma, ||\tau|-|\xi||\sim\lambda\}$. Partition $A_1$ into essentially disjoint sectors $\varGamma$ of length $\sim 1$ in the radial direction and of angular width $\sim\sqrt{\gamma\lambda}$. 
 Write $\varGamma'\sim\varGamma''$ if there exist $\zeta\in\varGamma'$, $\eta\in\varGamma''$ such that $m^{+-}_{\lambda,\gamma}(|\zeta|-|\eta|,\zeta+\eta)\ne 0$.

 Since the $\gamma\sim 1$ case is easier and can be handled by slightly adjusting the  argument, we only deal with the  $\gamma\ll 1$ case. 

Using the  elementary relation
$$
|\zeta+\eta|-||\zeta|-|\eta||\sim \frac{|\zeta|\cdot|\eta|}{|\zeta+\eta|}\;\theta(\zeta,-\eta)^2,
$$
where $\theta(\zeta,-\eta)$ denotes the angle formed between $\zeta$ and $-\eta$ (c.f. \cite{TV-2}), we have $\varGamma'\sim\varGamma''$ only when 
$
\sphericalangle(\varGamma',-\varGamma'')\lesssim \sqrt{\gamma\lambda}
$. By Schur's test,  \eqref{eq:---2} reduces to 
\begin{equation}
\label{eq:---3}
C^-_{\alpha_1,\alpha_2}(\varGamma',\varGamma'',\Delta_\gamma^+\Delta_{\lambda,\gamma}^-)\lesssim
\gamma^{-(\beta_0+\beta_++\beta_-)+\varepsilon}
\Bigl(\frac{\lambda}{\gamma}\Bigr)^{-\beta_-+\varepsilon}\,,
\end{equation}
uniformly for all $\varGamma'\sim\varGamma''$.\smallskip

For each dyadic $\sigma$ with $0<\sigma\lesssim\sqrt{\gamma\lambda}$, decompose $A_1$ as before
into finitely overlapping projective sectors $\{\Gamma\cup (-\Gamma)\}$ with angle $\sigma$.  Using the bilinear Whitney type decomposition and Schur's test, it suffices to show
\begin{equation}
\label{eq:---4}
C^-_{\alpha_1,\alpha_2}(\varGamma'\cap \Gamma',\varGamma''\cap(-\Gamma''),\Delta_\gamma^+\Delta_{\lambda,\gamma}^-)\lesssim
\sigma^{\varepsilon}
\gamma^{-(\beta_0+\beta_++\beta_-)+\varepsilon}
\Bigl(\frac{\lambda}{\gamma}\Bigr)^{-\beta_-+\varepsilon}\,,
\end{equation}
for $\sphericalangle(\Gamma',\Gamma'')\sim \sigma$.
To this end, we  decompose  $\Gamma'\times(-\Gamma'')$ as follows.\smallskip

For each $j\in\mathcal{J}_\sigma:=\{0,1,\ldots,\log_2\sigma^{-1}\}$ with $\sigma\in (0,1)\cap 2^\Z$, we write $\Gamma'=\cup_{\Lambda'_j}\Lambda'_j$, where each $\Lambda'_j$ is a segment on $\Gamma'$ of dimensions $\underbrace{\sigma\times\cdots\times\sigma}_{n-1\text{  times}}\times2^{j}\sigma$. For each $\Lambda'_j$, set
$$
\Lambda_j''=\{\xi\in(-\Gamma'')\;:\;\mathsf{dist}\bigl(\xi,-\Lambda'_j\bigr)\in [2^{j-1}\sigma,2^j\sigma)\},\quad \forall \;j\ge1,
$$ 
and $$\Lambda''_0=\{\xi\in(-\Gamma'')\;:\;\mathsf{dist}\bigl(\xi,-\Lambda_0\bigr)\le\sigma\},$$
so that we have 
$$
\Gamma'\times(-\Gamma'')=\bigcup_{j=0}^{\log_2\frac{1}{\sigma}}\bigcup_{\Lambda_j',\Lambda_j''}\Lambda'_j\times\Lambda_j''\,.
$$
By  Schur's test, \eqref{eq:---4}  reduces to 
\begin{equation}
\label{eq:---5}
C^-_{\alpha_1,\alpha_2}(\Lambda_j',\Lambda_j'',\Delta_\gamma^+\Delta_{\lambda,\gamma}^-)\lesssim
\sigma^{\varepsilon}
\gamma^{-(\beta_0+\beta_++\beta_-)+\varepsilon}
\Bigl(\frac{\lambda}{\gamma}\Bigr)^{-\beta_-+\varepsilon}\,,\quad \forall\;j\in\mathcal{J}_\sigma\,,
\end{equation}
where the logarithmic loss $\log\frac{1}{\sigma}$ is safely absorbed to $\sigma^\varepsilon$ by slightly adjusting  $\varepsilon$.

By using  $\Delta_\gamma^+\sim \gamma^{-(\beta_0+\beta_+)}D_+^{\beta_0+\beta_+}$
and $\Delta_{\lambda,\gamma}\sim \lambda^{-\beta_-}D_-^{\beta_-}$,  \eqref{eq:---5}  reduces to
\begin{equation}
\label{eq:---6}
C^-_{\alpha_1,\alpha_2}(\Lambda_j',\Lambda_j'',
D_+^{\beta_0+\beta_+}D_-^{\beta_-})\lesssim
\sigma^{3\varepsilon}
\,,\quad \forall\;j\in\mathcal{J}_\sigma\,,
\end{equation}
where we have used $\sigma\lesssim \sqrt{\gamma\lambda}$ and $\gamma\le 1$.

In view of the definition of $\Lambda_j',\Lambda_j''$ for $j\ge 1$ and the angular separation for the $j=0$ case, it is readily deduced that $ D_+\sim 2^j\sigma$ on $\mathcal{C}^+(\Lambda_j')+\mathcal{C}^-(\Lambda_j'')$. Therefore, by Bernstein's estimates, Lemma \ref{lem:LV-2.6} and \eqref{eq:BL}, we have
\begin{equation}
\label{eq:---7}
C^-_{\alpha_1,\alpha_2}(\Lambda_j',\Lambda_j'',D_+^{\beta_0+\beta_+}D_-^{\beta_-})\lesssim
2^{j(\beta_0+\beta_+-\beta_-)}\sigma^{\beta_0+\beta_++\beta_--\frac{4}{q(n+1)}}.
\end{equation}

If $\beta_0+\beta_+-\beta_-\ge0$,  by using  $2^j\sigma\lesssim 1$, the right side of \eqref{eq:---7}
is bounded by
\begin{align*} (2^j\sigma)^{(\beta_0+\beta_+-\beta_-)}\,\sigma^{2\beta_--\frac{4}{q(n+1)}}\lesssim \sigma^{3\varepsilon}
\end{align*}
for $\eps$ sufficiently small,
where we have used \eqref{eq:LV-4-el} with strict inequality.

If $\beta_0+\beta_+-\beta_-<0$,
discarding $2^{j(\beta_0+\beta_+-\beta_-)}\lesssim1$, and  using \eqref{eq:LV-2-el} \eqref{eq:LV-8-el} with strict inequality, the right side of \eqref{eq:---7}
is bounded by 
\begin{align*}
\,\sigma^{\alpha_1+\alpha_2-\frac{1}{q}\bigl(\frac{n-1}{n+1}+\frac{4}{n+1}\bigr)}\lesssim
\sigma^{\alpha_1+\alpha_2-\frac{1}{q}\bigl(\frac{n+3}{n+1}\bigr)}
\lesssim \sigma^{3\varepsilon}.
\end{align*}
Combining these two cases, we obtain \eqref{eq:---6}
and complete the proof of \eqref{eq:---}.
\medskip

\noindent{\bf $\bullet$ Proof of \eqref{eq:-+-}:}
We shall use \eqref{eq:LV-11-el} when $q\ge 2$ and use 
\eqref{eq:LV-12-el} when $q_c<q<2$. We may assume $\mu\gg 1$, whilst the $\mu\sim 1$ case  can be handled using the same method as for \eqref{eq:+++} and \eqref{eq:---}.

Using $\mu\gg 1$ and $(I-P_\pm)$, we have  $|\xi|\sim |\tau|+|\xi|\sim \mu$ for both the $(++)$ and the $(+-)$ cases. 
In fact, in the $(++)$ case, this follows from $|\xi|\sim\tau\sim\mu$, while in the $(+-)$ case, it follows from $|\tau|\le |\xi|$ and $|\tau|+|\xi|\sim \mu$.

By using the Bernstein estimate,
we may reduce \eqref{eq:-+-} to
\begin{equation}
\label{eq:-+-1}
C^\pm_{\alpha_1,\alpha_2}(A_1,A_\mu,(I-P_\pm)D_-^{\beta_-})\lesssim \mu^{-(\beta_0+\beta_+)-\varepsilon}.
\end{equation}
Noting that 
$$
\bigl|||\zeta|\pm|\eta||-|\zeta+\eta|\bigr|\lesssim \frac{|\zeta|\cdot|\eta|}{|\zeta|+|\eta|}\lesssim 1,
$$
for all $(\zeta,\eta)\in A_1\times A_\mu$,
we may decompose as before
$$
D_-^{\beta_-}=\sum_{\substack{\gamma\lesssim 1\\ \gamma\;\text{dyadic}}}\gamma^{\beta_-}\Delta_\gamma^-.
$$
By using the standard multiplier estimate, we are reduced to 
\begin{equation}
\label{eq:-+-2}
C^\pm_{\alpha_1,\alpha_2}(A_1,A_\mu,\Delta_\gamma^-)\lesssim \gamma^{-\beta_-+\varepsilon}\mu^{-(\beta_0+\beta_+)-\varepsilon}\,,
\end{equation}
for some small $\varepsilon>0$.

Partition $A_1=\cup_\varGamma \varGamma$, $A_\mu=\cup_{\varGamma_\mu}\varGamma_\mu$
where $\{\varGamma\}, \{\varGamma_\mu\}$ are collections of disjoint sectors of angular width $\sim \gamma^{\frac{1}{2}}$ and of radial width  $\sim 1$ and $\sim\mu$ respectively. Let $m_\gamma^-$ be the symbol of $\Delta_\gamma^-$. We write $\varGamma\sim\varGamma_\mu$ if there exists $\zeta\in\varGamma$ and $\eta\in\varGamma_\mu$ such that $m^-_\gamma(|\zeta|\pm|\eta|,\zeta+\eta)\ne 0$. Then, it is clear that $\varGamma\sim\varGamma_\mu$
only when $\sphericalangle(\varGamma,\pm\varGamma_\mu)\lesssim \gamma^{1/2}$, and
by using Schur's test once more,  we are reduced  to 
\begin{equation}
\label{eq:-+-3}
C^\pm_{\alpha_1,\alpha_2}(\varGamma,\varGamma_\mu,\Delta_\gamma^-)\lesssim \gamma^{-\beta_-+\varepsilon}\mu^{-(\beta_0+\beta_+)-\varepsilon},
\end{equation}
uniformly for all $\varGamma\sim\varGamma_\mu$.

Given $\varGamma,\,\varGamma_\mu$ such that $\varGamma\sim\varGamma_\mu$,
we may assume  that the operator $\Delta_\gamma^-$ in \eqref{eq:-+-3} is localized  on a neighborhood of the set $\{(|\zeta|\pm|\eta|,\zeta+\eta):(\zeta,\eta)\in\varGamma\times\varGamma_\mu\}$.

We shall  sketch the proof of \eqref{eq:-+-3} below. Let $\phi$ and $\psi_\mu$ be such that the spacetime Fourier transforms $\widetilde{\phi}$ and $\widetilde{\psi}_\mu$ are supported on $\mathcal{C}^+(\varGamma)$ and $\mathcal{C}^\pm(\varGamma_\mu)$ respectively.

For each $\mu\ge 1$, applying the standard bilinear Whitney type decomposition in the circular directions for $\phi\psi_\mu$ as above (see also \cite{TV-1,TaoMZ,LeeVargas}), we may write
$$
\phi\psi_\mu=\sum_{\substack{0<\sigma\lesssim\gamma^{1/2}\\ \sigma\;\text{dyadic}}}
\;\, \sum_{\Gamma,\Gamma':\sphericalangle(\Gamma,\pm\Gamma')\sim\sigma}\phi_{\Gamma}\,\psi_{\mu,\Gamma'}\,,
$$
where $\Gamma,\Gamma'$ are sectors of the forward cone or the forward and backward cones,  depending on being either in the $(++)$ or $(+-)$ case,
with  $\mathsf{Ang}(\Gamma)$, $\mathsf{Ang}(\Gamma')\sim \sigma\,,$
where $\mathsf{Ang}$ stands for the aperture angle of  $\Gamma,\Gamma'$. 
We slightly abused notations here by letting $\Gamma$ be on the cone rather than that 
in the proof for the other two cases  \eqref{eq:+++}\eqref{eq:---}, where instead of $\Gamma\subset A_1$, its lifts $\mathcal{C}^\pm(\Gamma)$ are subsets of the cone.

By using Minkowski's inequality, Cauchy-Schwarz and Plancherel's theorem, we may reduce  \eqref{eq:-+-3} further to 
\begin{equation}
\label{eq:rd-3}
\| D_0^{\beta_0} D_+^{\beta_+} D_-^{\beta_-} (\phi_\Gamma \psi_{\mu,\Gamma'})\|_{L^q_tL^r_x}
\le C\,\Bigl(\frac{\sigma}{\mu}\Bigr)^{2\eps}
\mu^{\alpha_2}\,
\EEE(\phi_{\Gamma})^{\frac{1}{2}}
\EEE(\psi_{\mu,\Gamma'})^{\frac{1}{2}},
\end{equation}
where we have used $D_0\sim D_+\sim\mu$ and $\Delta_\gamma^-\sim \gamma^{-\beta_-}D_-^{\beta_-}$.

Exploring the spatial rotation for $\phi_\Gamma$ and $\psi_{\mu,\Gamma'}$, we may assume that 
\begin{align*}
(++)\quad & (\,\Gamma,\,\Gamma'\,)=(\,\Gamma^+_\sigma,-\mu\Gamma^-_{\sigma}),\\
(+-)\quad &
(\,\Gamma,\,\Gamma'\,)=(\,\Gamma^+_\sigma,\,\;\mu\,\Gamma^-_{\sigma}\,)\,.
\end{align*}

Note that  $\widetilde{\phi_{\Gamma}}*\widetilde{\psi_{\mu,\Gamma'}}$
is supported in the region where we have $D_0\sim D_+\sim\mu$. To show \eqref{eq:rd-3}, it suffices to show
\begin{equation}
\label{eq:h-l-1}
\bigl\||\Box|^{\beta_-}(\phi_{\Gamma}\psi_{\mu,\Gamma'})\bigr\|_{L^q_tL^r_x}
\le C\Bigl(\frac{\sigma}{\mu}\Bigr)^{2\eps}
\mu^{-(\beta_0+\beta_+-\beta_-)+\alpha_2}
\EEE(\phi_{\Gamma})^\frac{1}{2}\EEE(\psi_{\mu,\Gamma'})^\frac{1}{2}
\end{equation}
with $(\Gamma,\Gamma')$ being equal  either to $(\Gamma^+_\sigma,-\mu\Gamma^-_\sigma)$ or to $(\Gamma^+_\sigma,\mu\Gamma^-_\sigma)$. 

In view of \eqref{eq:LV-2-el} and Lemma \ref{lem:LV-2.6}, we may reduce \eqref{eq:h-l-1}  to 
\begin{equation}
\label{eq:h-l-2}
\bigl\|(\phi_{\Gamma}\psi_{\mu,\Gamma'})
\bigr\|_{L^q_tL^r_x}
\le C\sigma^{-2\beta_-+2\eps}\mu^{\beta_--\alpha_1+\frac{n-1}{q(n+1)}-2\eps}
\EEE(\phi_{\Gamma})^\frac{1}{2}\EEE(\psi_{\mu,\Gamma'})^\frac{1}{2}.
\end{equation}

 By using the conditon  \eqref{eq:LV-4-el} with strict inequality and taking $\eps$ small enough depending on $\beta_-$ and $q$, we may reduce \eqref{eq:h-l-2} to 
\begin{equation}
\label{eq:h-l-3}
\bigl\|(\phi_{\Gamma}\psi_{\mu,\Gamma'})\bigr\|_{L^q_tL^r_x}
\le C\sigma^{-\frac{4}{q(n+1)}}
\mu^{\beta_--\alpha_1
	+\frac{n-1}{q(n+1)}-2\eps}
\EEE(\phi_{\Gamma})^\frac{1}{2}\EEE(\psi_{\mu,\Gamma'})^\frac{1}{2},
\end{equation}
with $(\Gamma,\Gamma')$ being equal to either $(\Gamma^+_\sigma,-\mu\Gamma^-_\sigma)$ or $(\Gamma^+_\sigma,\mu\Gamma^-_\sigma)$. \smallskip

 To show \eqref{eq:h-l-3}, we consider the following two cases:

$-$ Case A. $2\le q<\infty$\,;

$-$  Case B. $q_c<q<2$\,.\smallskip

In Case A, we have by using \eqref{eq:BL}
$$
\bigl\|(\phi_{\Gamma}\psi_{\mu,\Gamma'})\bigr\|_{L^q_tL^r_x}
\le C\sigma^{-\frac{4}{q(n+1)}}\mu^{\eps}\,
\EEE(\phi_{\Gamma})^\frac{1}{2}\EEE(\psi_{\mu,\Gamma'})^\frac{1}{2}\,.
$$
Using \eqref{eq:LV-11-el} with strict inequality, we have
$$
\beta_--\alpha_1+\frac{n-1}{q(n+1)}>3\eps
$$
by taking $\eps$ small enough. This yields \eqref{eq:h-l-3}.\medskip

In Case B, we have by using the Main Proposition
$$
\bigl\|(\phi_{\Gamma}\psi_{\mu,\Gamma'})\bigr\|_{L^q_tL^r_x}
\le C\sigma^{-\frac{4}{q(n+1)}}\mu^{\frac{1}{q}-\frac{1}{2}+\eps}
\EEE(\phi_{\Gamma})^\frac{1}{2}\EEE(\psi_{\mu,\Gamma'})^\frac{1}{2}\,.
$$
Using \eqref{eq:LV-12-el}  with strict inequality, we have
$$
\beta_--\alpha_1+\frac{n-1}{q(n+1)}-2\eps>\frac{1}{q}-\frac{1}{2}+\eps.
$$
by taking $\eps$ small enough. Thus, we have \eqref{eq:h-l-3} and hence \eqref{eq:-+-}. \medskip

Collecting \eqref{eq:+++}\eqref{eq:---} and \eqref{eq:-+-}, we have proved Theorem \ref{thm:-main}.
It remains to prove the Main Proposition.
\subsection{Reduction to Theorem \ref{pp:sigma-0}}
It suffices to prove \eqref{eq:BL}  for $(q,r)$ 
on the endline $\frac{2}{q}=(n+1)\Bigl(1-\frac{1}{r}\Bigr)$ 
apart from  the critical index $(q_c,r_c)$ defined in \eqref{eq:cri-ind}.

To handle the large angle case $\sigma\sim 1$, we shall use the following result, which is simply a restatement of the main proposition. We take $\sigma=\frac{1}{8}$ to illustrate the idea without loss of generality.
\begin{prop}
	\label{pp:sigma-1}
	Let $n\ge 2$.  For any $\eps>0$ and $1\le q,r\le\infty$ such that 
	$$
	\frac{1}{q}<\min\Bigl(1,\frac{n+1}{4}\Bigr),\quad
	\frac{1}{q}=\frac{n+1}{2}\Bigl(1-\frac{1}{r}\Bigr)\,,
	$$
	there exists a finite constant $C=C_{\eps,q,r,n}$ such that we have
	$$
	\|uv\|_{L^q_t(\R; L^r_x(\R^{n}))}\le \mu^{\max\bigl(\frac{1}{q}-\frac{1}{2},0\bigr)+\eps}
	\EEE(u)^{\frac{1}{2}} \EEE(v)^\frac{1}{2}	$$
	for all $\mu\ge 1$ and all $u,v$ being either $(+)$ and $(-)$ waves with 
	\begin{equation}
	\label{eq:spt-sigma-1-A}
	\supp\,\widetilde{u}\subset \,\Gamma^+_{\frac{1}{8}},\quad
	\supp\,\widetilde{v}\subset\,\mu\,\Gamma^-_{\frac{1}{8}},
	\end{equation}
	or being $(+)$ waves satisfying the conditions
	\begin{equation}
	\label{eq:spt-sigma-1-B}
	\supp\,\widetilde{u}\subset \,\Gamma^+_{\frac{1}{8}},\quad
	\supp\,\widetilde{v}\subset\,-\mu\,\Gamma^-_{\frac{1}{8}}.
	\end{equation}
\end{prop}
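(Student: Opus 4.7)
The plan is to deduce Proposition~\ref{pp:sigma-1} from Theorem~\ref{pp:sigma-0} via a Lee--Vargas style non-isotropic rescaling that identifies each of the two frequency sectors with a graph patch of a Monge function in $\mathscr{E}_\sigma$.

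As a preliminary step, I would reduce the $(+-)$ case \eqref{eq:spt-sigma-1-A} to the $(++)$ case \eqref{eq:spt-sigma-1-B}. Given a $(-)$ wave $v$ with $\supp\,\widetilde v\subset\mu\Gamma^-_{1/8}$, the substitution $\eta=-\xi$ shows that $\overline v$ is a $(+)$ wave with spacetime Fourier support in $-\mu\Gamma^-_{1/8}$ (as a subset of the forward cone) and the same energy as $v$; since $|uv|=|u\overline v|$, the norm $\|uv\|_{L^q_tL^r_x}$ is preserved. Both cases thereby reduce to estimating $\|uv\|_{L^q_tL^r_x}$ for two $(+)$ waves with spacetime Fourier supports on $\Gamma^+_{1/8}$ and $-\mu\Gamma^-_{1/8}$.

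Next I would apply the frequency change of variables $\xi=T\tilde\xi$ with $T=\mathrm{diag}(\sigma,\ldots,\sigma,1)$ and $\sigma=1/8$, which sends $\Gamma^+_\sigma$ onto $\varSigma_1$ and $-\mu\Gamma^-_\sigma$ onto $\mu\varSigma_2$. The cone phase becomes $|T\tilde\xi|=\tilde\xi_n+\sigma^2\Phi(\tilde\xi)$ with $\Phi(\tilde\xi):=\sigma^{-2}(|T\tilde\xi|-\tilde\xi_n)$. Absorbing the linear piece $\tilde\xi_n$ into the translation $x_n\mapsto x_n+t$, and rescaling via $\tilde t=\sigma^2 t$, $(\tilde x'',\tilde x_{n-1})=\sigma(x'',x_{n-1})$, turns $u$ and $v$ into $\sigma^{n-1}S^\Phi_{\varSigma_1}\tilde f$ and $\sigma^{n-1}S^\Phi_{\mu\varSigma_2}\tilde g$. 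A direct Taylor expansion on $\mathcal{N}$ gives
\[
\Phi(\tilde\xi)-\frac{|\tilde\xi'|^2}{2\tilde\xi_n}=O(\sigma^2|\tilde\xi|),
\]
with the analogous $O(\sigma^2|\tilde\xi|^{1-|\alpha|})$ bounds on higher derivatives; combined with the direct verification that $\Phi$ is homogeneous of degree one and satisfies $\Phi(\mathbf 0,1)=0$, $\nabla_{\tilde\xi'}\Phi(\mathbf 0,1)=\mathbf 0$, $\nabla_{\tilde\xi'}^2\Phi(\mathbf 0,1)=\mathrm{Id}$, this places $\Phi$ in $\mathscr{E}_\sigma$.

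Finally, I would account for the Jacobians: $\|\tilde f\|_2^2=\sigma^{-(n-1)}\EEE(u)$ (and similarly for $\tilde g$), while
\[
\|uv\|_{L^q_tL^r_x}=\sigma^{(n-1)(2-1/r)-2/q}\,\|\tilde u\tilde v\|_{L^q_tL^r_x}.
\]
Substituting the endline relation $2/q=(n+1)(1-1/r)$ collapses the combined exponent of $\sigma$ in the overall bound to $-4/(q(n+1))$, which is a harmless constant at $\sigma=1/8$; applying Theorem~\ref{pp:sigma-0} to $\tilde u\tilde v$ then produces the claimed estimate. The main point of care is the calibration of the fixed angle $1/8$ against the threshold $\sigma_0$ provided by Theorem~\ref{pp:sigma-0}: should $\sigma_0<1/8$, one must either rerun the argument with $\sigma$ replaced by a smaller constant, or split the two sectors into $O(1)$ angular pieces of size $\sigma_0$ and apply the argument on each piece. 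Either option only multiplies the final constant by a dimensional factor, so this is a purely cosmetic adjustment.
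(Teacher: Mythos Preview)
Your proof takes a genuinely different route from the paper. The paper does \emph{not} deduce Proposition~\ref{pp:sigma-1} from Theorem~\ref{pp:sigma-0}; rather, it indicates (see the remark after Proposition~\ref{pp:4.1-tao} and the remark closing Section~\ref{sec:pf-sigma-0}) that Proposition~\ref{pp:sigma-1} is obtained by running the entire induction-on-scale machinery of Sections~\ref{sect:w-p-d}--\ref{sec:pf-sigma-0} directly on the light cone, i.e.\ by adapting Tao's wave-table argument to mixed norms for the fixed surface $\{\tau=|\xi|\}$. In the paper's architecture, Proposition~\ref{pp:sigma-1} is precisely the complement of the Lee--Vargas rescaling: the rescaling handles $\sigma\le\sigma_0$ via Theorem~\ref{pp:sigma-0}, and Proposition~\ref{pp:sigma-1} handles the remaining range $\sigma\sim 1$ by a separate, parallel argument.

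This is also where your approach develops a real gap. The rescaling you describe is exactly the one the paper uses for small $\sigma$, and it places the phase $\Phi$ in $\mathscr{E}_{1/8}$; but Theorem~\ref{pp:sigma-0} only asserts the estimate for $\Phi_\sigma\in\mathscr{E}_\sigma$ with $\sigma\le\sigma_0$, where $\sigma_0$ is existentially quantified and may well be much smaller than $1/8$. Your proposed repair of splitting each sector into $O(1)$ pieces of angular width $\sigma_0$ does not close this: any pair of such pieces, one from $\Gamma^+_{1/8}$ and one from $-\mu\Gamma^-_{1/8}$, is still separated by angle $\sim 1/8$, not $\sim\sigma_0$. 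Rescaling by $\sigma_0$ then produces pieces separated by $\sim 1/(8\sigma_0)\gg 1$, which do not fit into the fixed sets $\varSigma_1,\varSigma_2$ of Theorem~\ref{pp:sigma-0}; rescaling instead by the separation $\sim 1/8$ lands you back at $\Phi_{1/8}\in\mathscr{E}_{1/8}$, outside the guaranteed range. Rotations on the cone preserve angular separation, so they cannot help either. The only honest way out is to rerun the induction-on-scale proof for the specific surface at hand---which is precisely what the paper does, and is no longer a deduction from Theorem~\ref{pp:sigma-0} as stated. Your reduction of the $(+-)$ case to the $(++)$ case and the Jacobian bookkeeping are correct, but the core step does not go through as a ``cosmetic adjustment.''
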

This is simply an extension of Theorem 1.1 in \cite{TaoMZ} to the mixed-norms with  $\mu=2^k$ for all $k\ge 1$ there. We remark that if $k\sim 1$, this result with $q\le 2$ has been established in \cite{Temur}, where the argument is technically simpler since one needs not deal with the high-low frequency interactions for $k\gg 1$.\medskip

The essential part of the proof for the main proposition is to handle the small angular case $\sigma\ll 1$. We shall reduce the question to Theorem \ref{pp:sigma-0}
by using the angular rescaling as in \cite{LeeVargas}.
Taking conjugate if necessary  $|uv|=|u\bar{v}|$, it suffices to consider the case when $u$ and $v$ are both $(+)$ waves satisfying \eqref{eq:sup-2}.

Let $\sigma_0>0$ be given by Theorem \ref{pp:sigma-0} 
and $$\theta(\xi')=\sqrt{1+|\xi'|^2}-1,\quad \xi'\in \R^{n-1}.$$ For each $\sigma\in (0,\sigma_0]$, define
 $$\Phi_\sigma(\xi)=\xi_n \sigma^{-2}\theta(\sigma\xi'/\xi_n).$$
It is easy to see that $\Phi_\sigma$ satisfies the conditions in $\mathscr{E}_\sigma$.
 \smallskip

Denote by $f(x)=u(0,x)$ and $g(x)=v(0,x)$ and let
$$f_\sigma(x)=\sigma^{-\frac{n-1}{2}}f(\sigma^{-1}x',x_n),\quad
g_\sigma(x)=\sigma^{-\frac{n-1}{2}}g(\sigma^{-1}x',x_n),$$
where $x=(x',x_n)$ with $x'\in\R^{n-1}$
and $x_n\in\R$, so that 
$$
\|f_\sigma\|_{L^2(\R^n)}=\EEE(u)^{1/2},\;
\|g_\sigma\|_{L^2(\R^n)}=\EEE(v)^{1/2}\,.
$$
Changing variables 
$$x_n\to x_n-t,\; (\xi',\xi_n)\to (\sigma\xi',\xi_n),\; (x',x_n,t)\to (\sigma^{-1}x',x_n,\sigma^{-2}t),$$ 
we have
$$
\|uv\|_{L^q_tL^r_x}\lesssim \sigma^{-\frac{2}{q}+(n-1)\bigl(1-\frac{1}{r}\bigr)}
\bigl\|\bigl(S^{\Phi_\sigma}_{\varSigma_1} f_\sigma\bigr)\bigl(S^{\Phi_\sigma}_{\mu\varSigma_2} g_\sigma\bigr)\bigr\|_{L^q_tL^r_x},
$$
where $\varSigma_1$ and $\varSigma_2$ fulfill the condition in Theorem \ref{pp:sigma-0}.
Applying the uniform bilinear estimate in Theorem \ref{pp:sigma-0} for all $\sigma\le \sigma_0\ll 1$, we obtain 
$$
\|uv\|_{L^q_tL^r_x}\le C_\eps \mu^{\max\bigl(\frac{1}{q}-\frac{1}{2},0\bigr)+\eps}\Bigl(\frac{1}{\sigma}\Bigr)^{\frac{2}{q}-\frac{n-1}{r'}} \EEE(u)^\frac{1}{2}\EEE(v)^{\frac{1}{2}},
$$
with $C_\eps$ independent of $\sigma$.
Therefore, to complete the proof of Theorem \ref{thm:-main}, it suffices to show Theorem \ref{pp:sigma-0} and Proposition \ref{pp:sigma-1}.\smallskip

In the rest part of the paper, we will only prove Theorem \ref{pp:sigma-0} with full details, and make remarks on the necessary modifications  to get Proposition \ref{pp:sigma-1}.
We shall assume $\mu=2^k$ for some integer $k\ge 1$.

\section{Fundamental properties of waves}
\label{sect:w-p-d}

For any $\sigma\in (0,\sigma_0]$ with $\sigma_0$ small to be fixed. Take $\Phi_\sigma\in \mathscr{E}_\sigma$ and let
\begin{equation}
\label{eq:rnb}
F_1^\sigma(x,t)=S_{2^m\varSigma_1}^{\Phi_\sigma}(t) f_1(x),\;
F_2^\sigma(x,t)=S_{2^k\varSigma_2}^{\Phi_\sigma}(t) f_2(x)\,,
\end{equation}
where $f_1,f_2\in\mathcal{ S}(\R^n)$.

We shall call  $F^\sigma_1$ and $F^\sigma_2$
the \emph{red} and \emph{blue} $\Phi_\sigma-$waves of frequency  $2^m$ and $2^k$ respectively. 
Let $\mathcal{C}^{\Phi_\sigma}=\{(\Phi_\sigma(\xi),\xi):\xi\in\R^n\}$ and denote
$$
\mathcal{C}_1^{\Phi_\sigma,2^m}=\{(\Phi_\sigma(\xi),\xi):\xi\in
2^m\varSigma_1\},\quad
\mathcal{C}^{\Phi_\sigma,2^k}_2=\{(\Phi_\sigma(\xi),\xi):\xi\in 2^k\varSigma_2\}\,,
$$
where $\varSigma_1$ and $\varSigma_2$ are defined  in Theorem \ref{pp:sigma-0}.
The spacetime Fourier transforms of $F^\sigma_1$ and $F^\sigma_{2}$ are supported respectively on $\mathcal{C}_1^{\Phi_\sigma,2^m}$ and $\mathcal{C}_2^{\Phi_\sigma,2^k}$.

When there is no need to distinguish the color, we shall also call $F^\sigma$ a $\Phi_\sigma-$\emph{wave} or simply a \emph{wave}. The \emph{energy} of a wave $F^\sigma(x,t)$ is defined as before by letting
$$
\EEE(F^\sigma):=\|F^\sigma(\cdot,t)\|_{L^2(\R_x^{n})}^2\,\,.
$$

We shall prove, through Section 3 to Section 5, that there exists a $\sigma_0>0$ such that $\forall\,\eps>0$ and $(q,r)$ satisfying the conditions in Theorem \ref{pp:sigma-0}, the bilinear estimate
\begin{equation}
\label{eq:bil-unif}
\|F^\sigma_1F^\sigma_2\|_{L^q_tL^r_x}\lesssim 2^{k\gamma(q,\eps)}
\EEE(F^\sigma_1)^{1/2}\EEE(F^\sigma_2)^{1/2},
\end{equation}
with $\gamma(q,\eps):=\max\Bigl(\frac{1}{q}-\frac{1}{2},0\Bigr)+\eps$,
 holds for all
 the red and blue $\Phi_\sigma$-waves $F^\sigma_1, F^\sigma_{2}$ of frequency $1$ and $2^k$, 
 uniformly with respect to 
  $\Phi_\sigma\in\mathscr{E}_\sigma$ and $\sigma\in(0, \sigma_0]$. The implicit constant
  in \eqref{eq:bil-unif} depends at most on $
 {\varSigma_1,\varSigma_2,\sigma_0,q,r,\eps}$.

\subsection{The law of propagation }
Let $\Xi^{\Phi_\sigma}_j=\bigl\{-\nabla\Phi_\sigma(\xi)\,:\, \xi\in \varSigma_j\bigr\}$ 	for  $j=1,2$. Then $\Xi_1^{\Phi_\sigma}, \Xi_2^{\Phi_\sigma}$ are disjoint
subsets of   an $(n-1)$ dimensional  hypersurface in $\R^n$.

Let ${\Xi}_j^{\Phi_\sigma,*}=\,\Xi_j^{\Phi_\sigma}$ be a small neighborhood of  $\Xi_j^{\Phi_\sigma}$  with the enlargement constant being independent of $\sigma_0$ such that if we  let
$$
\mathbf{ \Lambda}_j^{\Phi_\sigma}(\zzz
_0):=\zzz_0+\bigl\{t\bigl(\omega,1\bigr); \,t\in\R\,, \omega\in\,
{\Xi}^{\Phi_\sigma,*}_j\bigr\},
$$
with $\zzz_0=(x_0,t_0)$, then
$\bm{\Lambda}_1^{\Phi_\sigma}(\zzz_0)$
and $\bm{\Lambda}_2^{\Phi_\sigma}(\zzz_0)$
are  conic hypersurfaces in $\R^{n+1}$ meeting transversely for all $\Phi_\sigma\in\mathscr{E}_\sigma$ and $\sigma\le\sigma_0$ by taking $\sigma_0$  small enough.

\begin{prop}
	\label{pp:KKK}
 For each $j=1,2$, let $\varSigma_j^*$ be a small neighborhood of $\varSigma_j$ such that there exists a universal constant $c_0>0$ such that if we let $$\bigl(\Xi_j^{\Phi_\sigma,*}\bigr)^c:=\bigl\{-\nabla\Phi_\sigma(\xi):\xi\in\R^n,\,|\xi|=1\bigr\}\setminus \Xi_j^{\Phi_\sigma,*},$$
 and  $a_j\in C_0^\infty(\varSigma_j^*)$
 with $a_j(\xi)=1$ for all $\xi\in\varSigma_j$, then 
 $$
\min_{\iota=\pm} \angle \bigl(\iota(\omega,1), (-\nabla\Phi_\sigma(\xi),1)\bigr)\ge c_0
 $$
 for all $\omega\in \bigl(\Xi_j^{\Phi_\sigma,*}\bigr)^c$
 and 
 $\xi\in\supp\,a_j$.
Let	$$
	\mathcal{K}_j^{\Phi_\sigma}(x,t)=\int
	e^{ i (x\cdot \xi+t\Phi_\sigma(\xi))}
	a_j(\xi)\;d\xi\,.
	$$
	Then,
	\begin{equation}
	\label{eq:K-*}
	S_{\varSigma_j}^{\Phi_\sigma}(t)f(x)=\bigl[ \mathcal{K}_j^{\Phi_\sigma}(\cdot,t)*f\bigr](x),
	\end{equation}
	with
	\begin{equation}
	\label{eq:KKK}
	\bigl|\mathcal{K}_j^{\Phi_\sigma}(x,t)\bigr|\lesssim_{M}
	\bigl(1+\mathsf{dist}\bigl((x,t),\,{\mathbf{ \Lambda}_j^{\Phi_\sigma}}\;\bigr)\bigr)^{-M}
	\end{equation}
	for all $(x,t)\in\R^{n+1}$  and all integers $M\ge 1$.
	Here $\mathbf{ \Lambda}^{\Phi_\sigma}_j:=\mathbf{ \Lambda}^{\Phi_\sigma}_j(\zzz)_{\restriction_{\zzz=\mathbf{0}}}$ and the implicit constant in \eqref{eq:KKK} depends on $c_0$, but is uniform in $\sigma\in(0,\sigma_{ 0}]$.
\end{prop}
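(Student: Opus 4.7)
The plan is twofold. The convolution identity \eqref{eq:K-*} is immediate from Fourier inversion: the Fourier transform of $\mathcal{K}_j^{\Phi_\sigma}(\cdot, t) \ast f$ is $a_j(\xi) e^{it\Phi_\sigma(\xi)}\widehat{f}(\xi)$, which coincides with $\widehat{S^{\Phi_\sigma}_{\varSigma_j}(t) f}(\xi) = \chi_{\varSigma_j}(\xi) e^{it\Phi_\sigma(\xi)}\widehat{f}(\xi)$ whenever $\widehat f$ is supported in $\varSigma_j$, since $a_j \equiv 1$ there. The substance of the proposition is the kernel decay \eqref{eq:KKK}, which I will extract by a non-stationary phase analysis of the defining oscillatory integral.

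Write $\Psi(\xi) := x\cdot\xi + t\Phi_\sigma(\xi)$ and $\omega(\xi) := -\nabla\Phi_\sigma(\xi)$, so that $\nabla_\xi \Psi(\xi) = x - t\omega(\xi)$, and set $d := 1 + \mathrm{dist}((x,t), \mathbf{\Lambda}_j^{\Phi_\sigma})$. The crucial geometric input is the lower bound
\[
|\nabla_\xi \Psi(\xi)| \ge \mathrm{dist}\bigl((x,t), \mathbf{\Lambda}_j^{\Phi_\sigma}\bigr), \qquad \xi \in \supp a_j.
\]
The transversality hypothesis forces every $(-\nabla\Phi_\sigma(\xi), 1)$ with $\xi \in \supp a_j$ to lie in an angular $c_0$-neighborhood of the generator directions $\{(\omega, 1) : \omega \in \Xi_j^{\Phi_\sigma, *}\}$ of the cone $\mathbf{\Lambda}_j^{\Phi_\sigma}$; shrinking $\varSigma_j^*$ very slightly if needed, one may arrange $\omega(\xi) \in \Xi_j^{\Phi_\sigma, *}$ itself, so that $t(\omega(\xi), 1) \in \mathbf{\Lambda}_j^{\Phi_\sigma}$ and $|\nabla_\xi\Psi(\xi)| = \bigl|(x,t) - t(\omega(\xi), 1)\bigr|_{\R^{n+1}} \ge \mathrm{dist}((x,t), \mathbf{\Lambda}_j^{\Phi_\sigma})$.

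I then integrate by parts $M$ times against the operator $L := (i|\nabla_\xi\Psi|^2)^{-1}\nabla_\xi\Psi\cdot\nabla_\xi$, which preserves $e^{i\Psi}$. The higher-order derivatives of $\Psi$ appearing in $(L^t)^M a_j$ are of size $|t|$ times seminorms of $\Phi_\sigma$, uniformly bounded on $\supp a_j$ across the class $\mathscr{E}_\sigma$ in view of \eqref{eq:cond-error-ctrl}. In the easy regime $|t| \lesssim d$, a direct count yields the desired $|\mathcal{K}_j^{\Phi_\sigma}(x,t)| \lesssim_M d^{-M}$. In the opposite regime $|t| \gg d$ --- corresponding to $(x,t)$ being close to $\mathbf{\Lambda}_j^{\Phi_\sigma}$ in angle but large in norm --- I rescale $(x,t) = R \mathbf{e}$ with $R = |(x,t)|$ and combine a stationary-phase estimate of Morse-lemma type around the critical points of $\mathbf{e}_x\cdot\xi + \mathbf{e}_t\Phi_\sigma(\xi)$ (contributing $O(R^{-n/2})$) with integration by parts away from these points (contributing $O((R\sin\theta)^{-M}) = O(d^{-M})$, where $\theta$ is the angular distance from $\mathbf{e}$ to the cone directions).

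The main obstacle is precisely this regime $|t| \gg d$: the naïve IBP bound degrades as $(|t|/d^2)^M$, and to recover the clean $d^{-M}$ decay one must supplement IBP with the stationary-phase contribution and match the two pieces at the transition scale $|\xi - \xi_*| \sim R^{-1/2}$. Uniformity of all constants in $\sigma \in (0, \sigma_0]$ is guaranteed by the uniform bounds on $\partial^\alpha \Phi_\sigma$ on $\supp a_j$ (by \eqref{eq:cond-error-ctrl}) together with the $\sigma$-independent transversality constant $c_0$, so that neither the integration by parts nor the stationary phase step degenerates as $\sigma \to 0$.
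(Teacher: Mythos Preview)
You are right that the paper's one-line ``integrating by parts'' does not transparently handle the regime $|t|\gg d$: there the higher derivatives $\partial^{\alpha}\Psi=t\,\partial^{\alpha}\Phi_\sigma=O(|t|)$ dominate the gradient bound $|\nabla\Psi|\ge d$, and the naive count indeed degrades to $(|t|/d^{2})^{M}$. The paper is relying on standard oscillatory-integral lore and suppressing this point.

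Your proposed fix, however, does not close either. Two issues. First, the Hessian $\nabla^{2}\Psi=t\,\nabla^{2}\Phi_\sigma$ has rank exactly $n-1$ (Euler's identity puts $\xi$ in its kernel), so the stationary-phase exponent over an $n$-dimensional $\xi$-ball is $(n-1)/2$, not $n/2$. Second, and fatally, the scheme you describe---partition $\xi$-space into $\{|\xi-\xi_{*}|\lesssim R^{-1/2}\}$ and its complement, bound the near piece by $O(R^{-(n-1)/2})$ via stationary phase and the far piece by $O(d^{-M})$ via IBP, then add---cannot deliver $d^{-M}$ for large $M$. On the near ball the phase oscillates only $O(1)$, so the contribution really is of size $R^{-(n-1)/2}$, and this is \emph{not} dominated by $d^{-M}$ once $M>(n-1)/2$: take $n=2$, $d=10$, $R=|t|=10^{3}$, $M=10$, so $R^{-1/2}\approx 3\cdot 10^{-2}$ while $d^{-M}=10^{-10}$. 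No matching of scales repairs this, because the two bounds are being \emph{summed}, not multiplied.

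What actually works is to exploit homogeneity to \emph{factor} the integral rather than partition it. Write $\xi=(\eta\xi_{n},\xi_{n})$, so that $\Psi=\xi_{n}\bigl(x'\cdot\eta+x_{n}+t\,\Phi_\sigma(\eta,1)\bigr)$. Stationary phase in the $(n-1)$ variables $\eta$ (Hessian $\sim t\,\nabla_{\eta}^{2}\Phi_\sigma(\eta,1)$, now genuinely nondegenerate) yields a factor $|t|^{-(n-1)/2}$ and reduces the $\xi_{n}$-integral to one with \emph{linear} phase $\xi_{n}\psi(\eta_{c})$, where the critical value $\psi(\eta_{c})=x'\cdot\eta_{c}+x_{n}+t\,\Phi_\sigma(\eta_{c},1)$ vanishes exactly on $\mathbf{\Lambda}_{j}^{\Phi_\sigma}$ and is comparable to $d$ off it (the normal line to $\mathbf{\Lambda}_{j}^{\Phi_\sigma}$ at the relevant generator is spanned by $(\xi_{*},\Phi_\sigma(\xi_{*}))$, and $\psi(\eta_{c})$ is essentially the component of $(x,t)$ along it). A one-dimensional IBP in $\xi_{n}$ then gives $|\psi(\eta_{c})|^{-M}\sim d^{-M}$. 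The two gains \emph{multiply}, yielding $|\mathcal{K}_{j}^{\Phi_\sigma}|\lesssim|t|^{-(n-1)/2}d^{-M}\le d^{-M}$, uniformly in $\sigma$ by the $\mathscr{E}_{\sigma}$ bounds.
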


\begin{proof}
	By definition, we have \eqref{eq:K-*}. If $(x,t)$ is in a $O(1)-$neighbourhood of $\mathbf{ \Lambda}^{\Phi_\sigma}_j$, then  \eqref{eq:KKK} follows from  the Hausdorff-Young inequality. 
	Let $\mathsf{dist}((x,t),\mathbf{ \Lambda}^{\Phi_\sigma}_j)\ge C$ with   large $C$.
	Define
	$\displaystyle
	L=\frac{(x+t\nabla\Phi_\sigma(\xi))\cdot \nabla_\xi}{i\,|x+t\nabla\Phi_\sigma(\xi)|^2},
	$
	Then 
	$$
	L^M e^{ i (x\cdot \xi+{t}\Phi_\sigma(\xi))}
	=e^{i (x\cdot \xi+{t}\Phi_\sigma(\xi))},\quad \forall \; M\ge 1.
	$$
	Integrating by parts,
	we get \eqref{eq:KKK}.
	The proof is complete.
\end{proof}

\subsection{Energy estimates on conic regions of opposite colour}

\begin{lemma}
	\label{lem:opposite}
	Let $F^\sigma_1, F^\sigma_{2}$ be  red and blue waves   of frequency $2^m$ with $m\ge 0$ and let  $\mathbf{ \Lambda}_j^{\Phi_\sigma}(\zzz_0,r)$ be an $O(r)-$neighbourhood of $\mathbf{ \Lambda}_j^{\Phi_\sigma}(\zzz_0)$ with $r>0$ and  $\zzz_0=(x_0,t_0)$.
	 Then,  we have
	\begin{equation}
	\label{eq:opp}\| F^\sigma_j\|_{L^2(\mathbf{ \Lambda}^{\Phi_\sigma}_k(\zzz_0,r))}\lesssim  r^{1/2}
	\EEE(F_j^\sigma)^{1/2},\;\quad \forall\; j,k\in \{1,2\},\; j\ne k\,,
	\end{equation}
	for all $\zzz_0\in\R^{n+1}$ and  $r\gtrsim 2^{-m}$.
	The implicit constant in \eqref{eq:opp} might depend on $\sigma_0$ but is uniform with respect to $\sigma$ such that $0<\sigma\le \sigma_0$.
\end{lemma}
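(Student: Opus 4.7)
The plan is to reduce the estimate to frequency one by parabolic scaling and then foliate the conic neighborhood $\mathbf{\Lambda}_k^{\Phi_\sigma}(\mathbf{z}_0,r)$ by null lines along directions $\omega' \in \Xi_k^{\Phi_\sigma,*}$, which are transverse to the directions of propagation of $F_j^\sigma$. Because $\Phi_\sigma$ is homogeneous of degree one, the rescaling $(x,t) \mapsto (2^{-m}x, 2^{-m}t)$, $\xi \mapsto 2^m\eta$ turns a frequency-$2^m$ wave into a frequency-$1$ wave and sends the tube $\mathbf{\Lambda}_k^{\Phi_\sigma}(\mathbf{z}_0,r)$ to $\mathbf{\Lambda}_k^{\Phi_\sigma}(2^m\mathbf{z}_0, 2^m r)$; tracking the factor $2^{-m(n+1)}$ coming from $dx\,dt$ against the energy scaling $\EEE \mapsto 2^{mn}\EEE$ shows that \eqref{eq:opp} at frequency $2^m$ with $r\gtrsim 2^{-m}$ is equivalent to the corresponding statement at frequency $1$ with $r\gtrsim 1$, on which I focus below.

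For the unit-frequency case, fix a smooth unit normal field $\nu(\omega')$ to $\mathbf{\Lambda}_k^{\Phi_\sigma}$ along each generator and parameterize a small enlargement of $\mathbf{\Lambda}_k^{\Phi_\sigma}(\mathbf{z}_0, r)$ by
\begin{equation*}
(s, \omega', \rho) \longmapsto \mathbf{z}_0 + s(\omega', 1) + \rho\,\nu(\omega'), \qquad (s,\omega',\rho) \in \mathbb{R} \times \Xi_k^{\Phi_\sigma,*} \times (-r, r).
\end{equation*}
By the transversality of the associated tangent frame, the Jacobian of this map is bounded above and below uniformly in $\sigma \le \sigma_0$, so matters reduce to controlling, for each $(\omega',\rho)$, the $L^2(\mathbb{R}_s)$-norm of
$G_{\omega',\rho}(s) := F_j^\sigma\bigl(\mathbf{z}_0 + s(\omega',1) + \rho\,\nu(\omega')\bigr)$. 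A direct substitution of the Fourier representation of $F_j^\sigma$ gives
\begin{equation*}
G_{\omega',\rho}(s) = e^{i\Theta(\omega',\rho)}\int_{\varSigma_j} e^{is\psi(\xi)}\,A_{\omega',\rho}(\xi)\,d\xi, \qquad \psi(\xi) := \omega'\cdot\xi + \Phi_\sigma(\xi),
\end{equation*}
where $A_{\omega',\rho}$ is $\hat{f}_j$ times a unimodular phase. The critical ingredient is the uniform lower bound $|\nabla\psi(\xi)| = |\omega' + \nabla\Phi_\sigma(\xi)| \gtrsim 1$: since $\omega' \in \Xi_k^{\Phi_\sigma,*}$ while $-\nabla\Phi_\sigma(\xi) \in \Xi_j^{\Phi_\sigma}$ for $\xi \in \varSigma_j$, the perturbative control \eqref{eq:cond-error-ctrl} keeps $\Xi_1^{\Phi_\sigma,*}$ and $\Xi_2^{\Phi_\sigma,*}$ disjoint for $\sigma_0$ small. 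Pushing forward by $\xi\mapsto\psi(\xi)$ via the coarea formula, followed by Cauchy--Schwarz on level sets (whose induced $(n-1)$-dimensional measure is uniformly bounded because $\varSigma_j$ is compact) and Plancherel in $s$, yields
\begin{equation*}
\int_{\mathbb{R}}|G_{\omega',\rho}(s)|^2\,ds \;\lesssim\; \int_{\varSigma_j} |\hat{f}_j(\xi)|^2\,d\xi \;=\; \EEE(F_j^\sigma),
\end{equation*}
uniformly in $(\omega',\rho)$.

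Integrating this one-dimensional bound over $\omega' \in \Xi_k^{\Phi_\sigma,*}$ (a set of bounded measure) and $\rho \in (-r,r)$ produces the desired $r\,\EEE(F_j^\sigma)$, which is exactly \eqref{eq:opp} after taking square roots. The main technical hurdle is ensuring that the Jacobian of the parameterization, the lower bound $|\nabla\psi|\gtrsim 1$, and the measure of $\Xi_k^{\Phi_\sigma,*}$ can all be chosen uniformly across $\Phi_\sigma \in \mathscr{E}_\sigma$ for every $\sigma \in (0,\sigma_0]$; each of these follows from the compactness of $\varSigma_1, \varSigma_2$ together with \eqref{eq:cond-error-ctrl}, which forces the map $\xi \mapsto -\nabla\Phi_\sigma(\xi)$ to be a small $C^\infty$-perturbation of its $\sigma\to 0$ limit and therefore preserves the disjointness of $\Xi_1^{\Phi_\sigma,*}$ and $\Xi_2^{\Phi_\sigma,*}$ once $\sigma_0$ is fixed small enough.
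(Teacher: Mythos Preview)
Your scaling reduction to frequency one is correct, and so is the one-dimensional trace estimate along a single null line of the opposite colour. The gap is in the change of variables. The parameterization
\[
(s,\omega',\rho)\longmapsto \mathbf{z}_0 + s(\omega',1)+\rho\,\nu(\omega')
\]
does \emph{not} have uniformly bounded Jacobian: differentiating in the $(n-1)$ directions tangent to $\Xi_k^{\Phi_\sigma,*}$ brings down a factor of $s$ each time, so the Jacobian is comparable to $|s|^{n-1}$ for $|s|\gg r$. Hence
\[
\int_{\mathbf{\Lambda}_k^{\Phi_\sigma}(\mathbf{z}_0,r)}|F_j^\sigma|^2\,dx\,dt\;\sim\;\int_{\Xi_k^{\Phi_\sigma,*}}\int_{-r}^{r}\int_{\R}|G_{\omega',\rho}(s)|^2\,|s|^{n-1}\,ds\,d\rho\,d\omega',
\]
and your bound $\int_\R|G_{\omega',\rho}(s)|^2\,ds\lesssim\EEE(F_j^\sigma)$ does not control the extra weight $|s|^{n-1}$. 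Geometrically, the generators of the cone spread apart linearly in $|s|$, so the cross-section of the $r$-neighbourhood at height $|t|\sim|s|$ has volume $\sim r|s|^{n-1}$; a foliation indexed by a set of \emph{bounded} $(n-1)$-dimensional measure cannot match that growth.

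The paper avoids this by a $TT^*$ argument: one squares $\|F_j^\sigma\|_{L^2(\mathbf{\Lambda}_k^{\Phi_\sigma}(\mathbf{z}_0,r))}$, expresses the result through the kernel $\mathcal{K}_j^{\Phi_\sigma}$ of Proposition~\ref{pp:KKK}, splits the double time integral into $|t-\tilde t|\lesssim r$ and $|t-\tilde t|\gg r$, and in the latter region uses the transversality of $\mathbf{\Lambda}_1^{\Phi_\sigma}$ and $\mathbf{\Lambda}_2^{\Phi_\sigma}$ to convert the kernel decay into a bound of the form $(1+|t-\tilde t|/r)^{-M}$; Schur's test then produces the factor $r$. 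That route never parameterizes the unbounded conic set directly and so sidesteps the Jacobian issue.
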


\begin{proof}
	By translation invariance, we may take $\zzz_0$ as the origin.
	We only show \eqref{eq:opp} for $(j,k)=(1,2)$ and by symmetry the other case follows. 
	
	Consider first $m=0$.
	Let
	$$
	\mathfrak{D}_2^{\sigma,t}=\bigl\{{x}\in\R^{n};\,\mathsf{dist}((x,t),\mathbf{ \Lambda}_2^{\Phi_\sigma})\lesssim r\bigr\}.
	$$
	Let $S^{\Phi_\sigma,\,*}_1$ be the adjoint of $S_1^{\Phi_\sigma}$.
	By  $TT^*$, it suffices to show
	$$
	\Bigl\| \int S_1^{\Phi_\sigma,\,*}(t)\bigl[ \indic_{\mathfrak{D}^{\sigma,t}_2}\, H(\cdot,t)\bigr]dt \Bigr\|_{L^2(\R^{n})}
	\lesssim  r^{1/2}\|H\|_{L^2(\R^{n+1})}
	$$
	for all $H\in L^2(\R^{n+1})$.
	Taking squares and multiplying out, we have
	\begin{multline}
	\Bigl\| \int S_1^{\Phi_\sigma,\,*}(t)\bigl[ \indic_{\mathfrak{D}^{\sigma,t}_2}\, H(\cdot,t)\bigr]dt \Bigr\|_{L^2(\R^{n})}^2\\
	\lesssim
	\iint
	\indic_{\mathfrak{D}^{\sigma,\tilde{t}}_2}(\tilde{x})\,
	\mathcal{K}_1^{\Phi_\sigma}(\tilde{x}-x,\tilde{t}-t)\;
	\indic_{\mathfrak{D}^{\sigma,t}_2}(x)\, H(x,t)\;\overline{H(\tilde{x},\tilde{t})}\;dx d\tilde{x}dtd\tilde{t},\label{eq:HHH}
	\end{multline}
	where $\mathcal{K}_1^{\Phi_\sigma}$ is given by Proposition \ref{pp:KKK} with $a_1$ replaced by $|a_1|^2$ there.
	Split the integral \eqref{eq:HHH} over time variables to the 
	$|t-\tilde{t}|\lesssim  r$ and $|t-\tilde{t}|\gg  r$ part.
	
	By  Cauchy-Schwarz and the $L^2-$boundedness  of
	$$ \sup_{t,\tilde{t}}\;\bigl\|\indic_{\mathfrak{D}^{\sigma,\tilde{t}}_2}
	S^{\Phi_\sigma}_{ 1}(\tilde{t})\circ S^{\Phi_\sigma,\,*}_{ 1}(t)\indic_{\mathfrak{D}^{\sigma,t}_2}\bigr\|_{L^2(\R^{n})\to L^2(\R^{n})}={O}(1),$$  the  $|t-\tilde{t}|\lesssim  r$ part of the integral \eqref{eq:HHH} is  bounded by $ r\|H\|_2^2$.
	\smallskip

   Next, using  \eqref{eq:KKK} 
	$$
		|\mathcal{K}_1^{\Phi_\sigma}(\tilde{x}-x,\tilde{t}-t)|\lesssim_{M}\Bigl(1+\mathsf{dist}((\tilde{x}-x,\tilde{t}-t)),\mathbf{ \Lambda}^{\Phi_\sigma}_1\Bigr)^{-M}
	$$
	and the constraints
	for all $x\in\mathfrak{D}_2^{\sigma,t}$, $\tilde{x}\in\mathfrak{D}_2^{\sigma,\tilde{t}}$,
	one finds that  the $|t-\tilde{t}|\gg r$ part of \eqref{eq:HHH}
	can be bounded by 
	$$
   \iint (1+|t-\tilde{t}|/r)^{-M}\|H(\cdot,t)\|_2\|H(\cdot,\tilde{t})\|_2dtd\tilde{t},
	$$
	where, we have taken $\sigma_0$ small so that the amount of the transversality between the conic surfaces $\mathbf{ \Lambda}_1^{\Phi_\sigma} $ and $\mathbf{ \Lambda}_2^{\Phi_\sigma}$ depends only on $\varSigma_1,\varSigma_2,\sigma_0$ but independent of $\sigma\in(0,\sigma_0]$ and $\Phi_\sigma\in\mathscr{E}_\sigma$.
	 The result follows  by Schur's test.
	 
	 For the general case $m\ge 1$, we may apply the result to $F^{\Phi_\sigma}_1(2^{-m}x,2^{-m}t)$ and use rescaling. The proof is complete.
\end{proof}

\begin{coro}
	\label{coro:opp}
	There is a constant $C>0$ depending only on $\varSigma_1,\varSigma_2$ and $\sigma_0$ such that if $\sigma_0$ is sufficiently small, then the following property holds: 
	Let $F^\sigma_1$ be a red $\Phi_\sigma-$wave of frequency $1$ and 
	$F_2^\sigma$ be a blue $\Phi_\sigma-$wave of frequency $2^k$
	associated to some $\Phi_\sigma\in\mathscr{E}_\sigma$ with $\sigma\in(0,\sigma_0]$.
	Let $R\ge r\gg 1$, $\zzz_0=(x_0,t_0)\in\R^{n+1}$ and  set
	$$\mathbf{ \Lambda}^{\Phi_\sigma}_{\mathsf{purple}}(\zzz_0,r)=\mathbf{ \Lambda}_1^{\Phi_\sigma}(\zzz_0,r)\cup\mathbf{ \Lambda}_2^{\Phi_\sigma}(\zzz_0,r).$$
    Then, we have
	$$
	\|F_1^\sigma F_2^\sigma\|_{L^1(Q_R\cap\mathbf{ \Lambda}^{\Phi_\sigma}_{\mathsf{purple}}(\zzz_0,r))}
	\le C \sqrt{rR}\;\EEE(F_1^\sigma)^{1/2}\EEE(F_2^\sigma)^{1/2},
	$$
	for all $Q_R$  spacetime cube of length $R$. 
\end{coro}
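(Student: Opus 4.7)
The plan is to bound the $L^1$ norm on the purple region by decomposing $\mathbf{ \Lambda}^{\Phi_\sigma}_{\mathsf{purple}}(\zzz_0,r)=\mathbf{ \Lambda}_1^{\Phi_\sigma}(\zzz_0,r)\cup \mathbf{ \Lambda}_2^{\Phi_\sigma}(\zzz_0,r)$ and, on each piece, applying Cauchy--Schwarz in a way that exploits the transverse $L^2$ bound of Lemma \ref{lem:opposite} for the wave of the \emph{opposite} colour. By subadditivity of the $L^1$ norm,
\[
\|F_1^\sigma F_2^\sigma\|_{L^1(Q_R\cap\mathbf{ \Lambda}^{\Phi_\sigma}_{\mathsf{purple}}(\zzz_0,r))} \le \sum_{j=1,2}\|F_1^\sigma F_2^\sigma\|_{L^1(Q_R\cap\mathbf{ \Lambda}_j^{\Phi_\sigma}(\zzz_0,r))},
\]
so it is enough to bound each of the two summands by $C\sqrt{rR}\,\EEE(F_1^\sigma)^{1/2}\EEE(F_2^\sigma)^{1/2}$.

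On the $j=1$ piece I would apply Cauchy--Schwarz by confining the factor $F_2^\sigma$ (the blue wave, i.e.\ of \emph{opposite} colour to the red cone $\mathbf{ \Lambda}_1^{\Phi_\sigma}$) to the conic neighbourhood while leaving $F_1^\sigma$ free on $Q_R$, obtaining
\[
\|F_1^\sigma F_2^\sigma\|_{L^1(Q_R\cap\mathbf{ \Lambda}_1^{\Phi_\sigma}(\zzz_0,r))} \le \|F_1^\sigma\|_{L^2(Q_R)}\,\|F_2^\sigma\|_{L^2(\mathbf{ \Lambda}_1^{\Phi_\sigma}(\zzz_0,r))}.
\]
The first factor is controlled by $R^{1/2}\EEE(F_1^\sigma)^{1/2}$ via Fubini and the time-invariance of the spatial $L^2$ energy of a wave, while the second is controlled by $r^{1/2}\EEE(F_2^\sigma)^{1/2}$ via Lemma \ref{lem:opposite} applied to the blue wave on a neighbourhood of the red cone. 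The $j=2$ piece is treated symmetrically: this time $F_1^\sigma$ is confined to $\mathbf{ \Lambda}_2^{\Phi_\sigma}(\zzz_0,r)$ and $F_2^\sigma$ is left free on $Q_R$, with the roles of the two estimates exchanged. Summing the two contributions gives the claimed $\sqrt{rR}$ bound.

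The only points to verify are routine: the scale hypothesis $r\gtrsim 2^{-m}$ of Lemma \ref{lem:opposite} at the red scale $m=0$ and the blue scale $m=k$ is automatic from $r\gg 1$; and the uniformity of the implicit constant with respect to $\sigma\in(0,\sigma_0]$ and $\Phi_\sigma\in\mathscr{E}_\sigma$ is already built into the corresponding statement of Lemma \ref{lem:opposite}, provided $\sigma_0$ is fixed small enough to guarantee a definite amount of transversality between $\mathbf{ \Lambda}_1^{\Phi_\sigma}$ and $\mathbf{ \Lambda}_2^{\Phi_\sigma}$. I do not foresee any real obstacle: the corollary is essentially a mechanical combination of Cauchy--Schwarz, Fubini and Lemma \ref{lem:opposite}, and the single conceptual point is the pairing of each wave with the conic neighbourhood of opposite colour, which is exactly what activates the transverse $L^2$ gain of the preceding lemma.
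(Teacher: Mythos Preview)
Your proposal is correct and matches the paper's approach: the paper simply says ``The result follows from using Lemma~\ref{lem:opposite} and the triangle inequality,'' and what you have written is exactly the spelled-out version of that sentence --- split $\mathbf{\Lambda}^{\Phi_\sigma}_{\mathsf{purple}}$ by the triangle inequality, then on each piece pair the wave of opposite colour with the conic $L^2$ bound of Lemma~\ref{lem:opposite} and the wave of matching colour with the trivial Fubini bound $\|F\|_{L^2(Q_R)}\le R^{1/2}\EEE(F)^{1/2}$ via Cauchy--Schwarz. Your check of the scale hypothesis $r\gtrsim 2^{-m}$ at $m=0$ and $m=k$ and of the uniformity in $\sigma$ is also correct.
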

\begin{proof}
	The result follows from using Lemma  \ref{lem:opposite}
	and the triangle inequality.
\end{proof}

\subsection{The careful  wavepacket decomposition }

	Let $N>1$ be a large integer depending only on the dimension $n$ and  $C_0=2^{\lfloor N/\eps_0 \rfloor^{10}}$ be a  larger integer where $\eps_0>0$ is made small when necessary but never tending zero.

	Let
$0<\varpi\le 2^{-C_0}$ and $\varrho=R^{1/2}$. Define
$$\mathcal{ L}=\varpi^{-2}\varrho\,\Z^{n},\quad \varGamma=\varrho^{-1}\Z^{n-1}. $$
Fix $\Phi_\sigma\in\mathscr{E}_\sigma$.
For each $(\vvv,\mmu)\in\mathcal{ L}\times\varGamma$,
define a $(\Phi_\sigma,\varrho)-$\emph{tube} $T=T^{\Phi_\sigma}_{\varrho}(\vvv,\mmu)$ as
\begin{equation}
\label{eq:def-tube}
T=\bigl\{(x,t)\in\R^{n+1}:|x-\vvv+t\,\nabla_{\xi}\Phi_\sigma(\mmu,1)|\le\varrho\bigr\}.
\end{equation}
Denote by $\TT_\sigma=\TT_{\Phi_\sigma}$ the collection of such tubes and write 
$(\vvv_T,\mmu_T)$ as the coordinate parametrizing a given tube $T\in \TT_\sigma$ in the sense of \eqref{eq:def-tube}.

For any $T$,  let
$$\psi_{T}(x,t)=\min\bigl\{1, \mathsf{dist}((x,t),T)^{-100N}\bigr\}.$$ 
be the bump function adapted to $T$.

A \emph{disk} is a subset $D$ resident in $\R^{n+1}_{x,t}$ of the form 
$$D=D(x_D,t_D;r_D)=\{(x,t_D):|x-x_D|\le r_D\},$$
for some $(x_D,t_D)\in \R^{n+1}$, which is called the \emph{center} of $D$ and $r_D> 0$, which is called the \emph{radius} of $D$.
We call $t_D$ the \emph{time coordinate} of  $D$.
The indicator function of $D$ is defined as $$\indic_D(x)=\begin{cases}
1\, ,\;(x,t_D)\in D\,,\\  0\;,\; (x,t_D)\not\in D\,.
\end{cases}$$
The bump function adapted to $D$ is defined as 
$$
w_D(x):=\Bigl(1+\frac{|x-x_D|}{r_D}\Bigr)^{-N^{100}}.
$$
For any $F\in C^\infty(\R^{n+1})$ and disk $D$,  we write
\begin{align*}
&\|F\|_{L^2(D)}:=\;\Bigl(\int_{|x-x_D|\le r_D}\bigl|F(x,t_D)\bigr|^2dx\Bigr)^\frac{1}{2}.
\end{align*}

We use   $Q=Q(x_Q,t_Q;r_Q)$ to denote a spacetime cube $Q$  of side-length $r_Q$ centered at $\zzz_Q:=(x_Q,t_Q)\in\R^{n+1}$.  For any $C>0$, we write $C Q=Q(x_Q,t_Q;C r_Q)$, and
we call $\mathsf{LS}(Q):=[t_{Q}- \frac{r_{Q}}{2},\,t_{Q}+\frac{r_{Q}}{2}]$ the \emph{lifespan} of $Q=Q(x_{Q},t_{Q};r_{Q})$.

Let $Q=Q_R\subset \R^{n+1}$ be a spacetime 
cube of length $R\ge C_0 2^{k}$. Let $j\ge0$ be an integer and partition $Q$ into $2^{(n+1)j}$ many subcubes of 
side-length $2^{-j}R$. Denote $\mathcal{ Q}_j(Q)$
as the collection of these cubes. Let $J\approx \log R$ such that each $\qq$ in $\mathcal{ Q}_J(Q)$ is a  cube of side-length $\varrho$.

For any $\Phi_\sigma$-wave $F^\sigma$ of frequency one whose spacetime Fourier transform $\widetilde{{ F}^\sigma}$ is supported on the surface ${ C}^{\Phi_\sigma}_{\varSigma}:=\{(\Phi_\sigma(\xi),\xi):\xi\in\varSigma\}$, we define the \emph{margin} of ${ F}^\sigma$ 
\begin{equation}
\label{eq:margin}
\mathsf{marg}({ F}^\sigma):=\mathsf{dist}\bigl(\supp(\widetilde{{ F^\sigma}}),\;\mathcal{ C}^{\Phi_\sigma}\setminus \mathcal{ C}^{\Phi_\sigma}_{\varSigma^*}\bigr),
\end{equation}
where $\varSigma^*$ is the enlargement of $\varSigma$ in Proposition \ref{pp:KKK} with $\varSigma$ being either $\varSigma_1$ or $\varSigma_2$.

If $F^\sigma$ is a $\Phi_\sigma-$wave of frequency $2^m$,  we define $\mathbb{D}_{2^m}$ by letting
$$\mathbb{D}_{2^m}:F^\sigma(x,t)\mapsto \;F^\sigma(2^{-m}x,2^{-m}t)\;,$$
so that $\mathbb{D}_{2^m}F^\sigma(x,t)=S^{\Phi_\sigma}_{\varSigma}(t)\bigl[\mathbb{D}_{2^m}F^\sigma(\cdot,0)\bigr](x)$
with $\varSigma$ being equal to $\varSigma_1$ or $\varSigma_2$,
is a wave of frequency one.
Define the margin  of $F^\sigma$ as 
$
\mathsf{marg}(F^\sigma)=\mathsf{marg}\bigl(\mathbb{D}_{2^m}F^\sigma\bigr).
$
Thus, by homogeneity,
$$
\mathsf{marg}(F^\sigma)=2^{-m}\mathsf{dist}\bigl(\supp(\widetilde{{ F}^\sigma}),2^m (\mathcal{ C}^{\Phi_\sigma}\setminus\mathcal{ C}^{\Phi_\sigma}_{\varSigma^*})\bigr).
$$

	Let $\mathsf{Q}=\underbrace{[-1/2,1/2)\times\cdots\times[-1/2,1/2)}_{(n-1)\,\text{times}}$
and $\indic_\mathsf{Q}$ be the characteristic function of the unit box $\mathsf{Q}$.	
For each $\mmu\in\varGamma$ and $\zeta\in\R^{n-1}$,  define 
$$
\mathbb{P}^{\mmu,\,\varrho}_{\zeta}:f\mapsto\int_{\varSigma^*} 
e^{i\langle x,\xi\rangle}
\indic_{Q+\zeta}\bigl(\varrho\bigl(\xi_n^{-1}{\xi'}-\mmu\bigr)\bigr)\wh{f}(\xi)\,d\xi'd\xi_n,
$$
where $\mathsf{Q}+\zeta:=\{\eta+\zeta:\eta\in\mathsf{Q}\}$.

\begin{lemma}
	\label{lem:wp-d}
    There exists  $C>0$ depending only on $n$, such that the following 
    statement holds:  
    
    For any spacetime cube $Q=Q_R$ of side-length $R$, for any $\varpi\in (0,2^{-C_0}]$, there is a linear map $F^\sigma\mapsto \{F^\sigma_T\}_{T\in\TT_\sigma}$ such that for any $\Phi_\sigma-$wave $F^\sigma$ of frequency one, each $F_T^\sigma$ is also a $\Phi_\sigma$-wave of the same colour  and frequency to $F^\sigma$ with the relaxed margin property
    \begin{equation}
    \label{eq:re-mar}
    \mathsf{marg}(F^\sigma_T)\ge\mathsf{marg}(F^\sigma)-C \varrho^{-1},
    \end{equation}
    and 
	\begin{equation}
	\label{eq:wpd}
	F^\sigma(x,t)=\sum_{T\in\TT_\sigma}F^\sigma_{T}(x,t),\quad\forall \,(x,t)\in\R^{n}\times\R\,.
	\end{equation}
	Moreover, we have:
	\begin{itemize}
		\item[-] For any   $t\in \mathsf{LS}(C_0Q)$ and every $\, T\in\TT_\sigma$
		\begin{equation}
		\label{eq:up}
		\EEE(F^\sigma_{T})\le C\,\varpi^{-C}\int_{\mathsf{Q}}\,\bigl\|\psi_{T}(\cdot,t)\,\mathbb{P}_{\zeta}^{\mmu_T,\varrho} F^\sigma_{T}(\cdot,t)\bigr\|^2_{L^2(\R^n)} \;d\zeta\,,
		\end{equation}
		\item[-] Concentration of $F^\sigma_T$ on $T$
		\begin{equation}
			\label{eq:rapdcay}
			\|F^\sigma_{T}\|_{L^\infty(Q)}\le C \Bigl(1+\mathsf{dist}\bigl(T,Q\bigr)\Bigr)^{-N}
			\EEE(F^\sigma)^{1/2}\,,
		\end{equation}
	\item[-] No local  accumulating of tube multiplicities
		\begin{equation}
		\label{eq:strongcon}
		\sum_{T\in\TT_\sigma}\sup_{\qq\in\mathcal{Q}_J(Q)}
		\|\psi_{T}^{-50}F^\sigma_{T}\|^2_{L^2(C\qq)}\le C\,\varpi^{-C}\varrho\, \EEE(F^\sigma)\,,
		\end{equation}
	\item[-] 	The Bessel type inequality 
	\begin{equation}
	\label{eq:bessel}
	\Biggl(\sum_{\varDelta}\EEE\,\Bigl(\sum_{T\in\TT_\sigma} m^{\varDelta,T} F^\sigma_T\Bigr)\Biggr)^{\frac{1}{2}}
	\le (1+C\,\varpi)\,\EEE(F^\sigma)^{1/2},
	\end{equation}
	holds  for all $m^{\varDelta,T}\ge 0$ such that $$\displaystyle\sup_{T\in\TT_\sigma}\sum_{\varDelta} m^{\varDelta,T}\le 1$$ where  $\sum_\varDelta$ is summing over a finite number of $\varDelta$'s.
	\end{itemize}
\end{lemma}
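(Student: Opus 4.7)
The plan is to construct the decomposition in two stages: an angular frequency partition at scale $\varrho^{-1}$, followed by a spatial cutoff at scale $\varpi^{-2}\varrho$ whose position is then transported along the group velocity by the propagator. Pick a smooth partition of unity $\{\chi_{\mmu}\}_{\mmu\in\varGamma}$ on the angular projection $\xi\mapsto\xi'/\xi_n$ subordinate to $\varGamma$, with each $\chi_{\mmu}$ supported in a cube of side $\lesssim\varrho^{-1}$ centered at $\mmu$, and split $F^\sigma=\sum_{\mmu}F^\sigma_{\mmu}$, where $F^\sigma_{\mmu}$ is the $\Phi_\sigma$-wave whose spatial Fourier transform at time $0$ is $\chi_{\mmu}(\xi'/\xi_n)\wh{F^\sigma(\cdot,0)}(\xi)$. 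Next, for each $\mmu\in\varGamma$ choose a smooth partition of unity $\{\eta_{\vvv}\}_{\vvv\in\mathcal L}$ of $\R^n$ with $\eta_{\vvv}$ adapted to a ball of radius $\varpi^{-2}\varrho$ around $\vvv$, and for $T=T^{\Phi_\sigma}_\varrho(\vvv_T,\mmu_T)$ define
$$F^\sigma_T(x,t):=S^{\Phi_\sigma}_{\varSigma^*}(t)\bigl[\eta_{\vvv_T}\cdot F^\sigma_{\mmu_T}(\cdot,0)\bigr](x).$$
Linearity, the reconstruction identity \eqref{eq:wpd}, and the margin relaxation \eqref{eq:re-mar} are then immediate, the last one using that multiplication by $\chi_{\mmu}$ inflates the spatial Fourier support by at most $O(\varrho^{-1})$.

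For the concentration bound \eqref{eq:rapdcay}, I invoke the kernel representation \eqref{eq:K-*} of Proposition \ref{pp:KKK}: the kernel of $S^{\Phi_\sigma}_{\varSigma^*}(t)$ restricted to the angular slab around $\mmu_T$ is rapidly decaying off the ray $x=\vvv_T-t\nabla_\xi\Phi_\sigma(\mmu_T,1)$, the $O(\varrho^{-1})$ angular uncertainty producing only a transverse thickening $O(\varrho^{-1}R)\lesssim\varrho$ already absorbed in the radius of $T$; convolving against $\eta_{\vvv_T}\cdot F^\sigma_{\mmu_T}(\cdot,0)$ and using energy conservation yields \eqref{eq:rapdcay}. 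The energy bound \eqref{eq:up} follows from Plancherel at time $t$ together with the elementary averaging lower bound
$$\int_{\mathsf{Q}}\bigl\|\mathbb{P}^{\mmu,\varrho}_\zeta g\bigr\|^2_{L^2(\R^n)}\,d\zeta\gtrsim_n \|g\|_{L^2(\R^n)}^2,$$
valid for any $g$ whose Fourier transform is supported in the $(\mmu,\varrho^{-1})$-angular slab of $\varSigma^*$, applied to $g=F^\sigma_T(\cdot,t)$. Inserting the tube bump $\psi_T$ costs exactly the stated $\varpi^{-C}$ factor, since the initial data of $F^\sigma_T$ is supported in a ball of radius $\varpi^{-2}\varrho$, whereas $\psi_T\approx 1$ only inside the scale-$\varrho$ tube, and the mismatch between these two scales is paid by a fixed polynomial power of $\varpi^{-2}$.

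The Bessel-type inequality \eqref{eq:bessel} is the main technical obstacle; the no-accumulation bound \eqref{eq:strongcon} follows from \eqref{eq:bessel} by taking $m^{\varDelta,T}=\indic_{T\cap C\varDelta\ne\emptyset}$ with $\varDelta$ ranging over the $\varrho$-cube partition $\mathcal{Q}_J(Q)$, and then using \eqref{eq:rapdcay} to absorb the contribution of tubes far from $\qq$. Expanding the square in \eqref{eq:bessel} and exploiting that $\chi_{\mmu}\chi_{\mmu'}\ne 0$ only for $O(1)$-adjacent $\mmu,\mmu'$ collapses the frequency cross terms to
$$\sum_{\mmu\in\varGamma}\sum_{\varDelta}\Bigl\|\sum_{\vvv\in\mathcal L}m^{\varDelta,T(\vvv,\mmu)}\,\eta_{\vvv}\,F^\sigma_{\mmu}(\cdot,0)\Bigr\|^2_{L^2(\R^n)}$$
modulo $O(\varpi^{M})$ tails. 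The sharp constant $(1+C\varpi)$ is then obtained by constructing $\{\chi_{\mmu}\}$ and $\{\eta_{\vvv}\}$ as smooth tent-type partitions satisfying $\sum_{\mmu}\chi_{\mmu}^2,\sum_{\vvv}\eta_{\vvv}^2\le 1+O(\varpi)$, the overlap being suppressed below $\varpi$ by the ratio $\varpi^{-2}$ between the partition scale $\varpi^{-2}\varrho$ and the transition region. Transporting this near-orthogonality budget through the time-$t$ averaging in $\zeta\in\mathsf{Q}$ that appears in \eqref{eq:up}, without losing the sharp $(1+C\varpi)$ constant that is crucial for the wave-table iteration, is the delicate point that will require the most care.
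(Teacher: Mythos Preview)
Your construction is morally the same as the paper's---angular partition at scale $\varrho^{-1}$, then spatial cutoff at scale $\varpi^{-2}\varrho$---and the kernel argument for \eqref{eq:rapdcay} is fine. But two points in your sketch are genuine gaps.

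First, your derivation of \eqref{eq:strongcon} from \eqref{eq:bessel} does not work. With $m^{\varDelta,T}=\indic_{T\cap C\varDelta\ne\emptyset}$ and $\varDelta$ ranging over $\mathcal Q_J(Q)$, a single tube $T$ meets $\sim R/\varrho=\varrho$ cubes $\varDelta$, so the hypothesis $\sum_\varDelta m^{\varDelta,T}\le 1$ fails. More importantly, \eqref{eq:strongcon} asks for $\sum_T\sup_\qq$, which is not of the form $\sum_\varDelta\EEE(\sum_T m^{\varDelta,T}F_T^\sigma)$ at all; in the paper \eqref{eq:strongcon} is proved directly from the kernel bound \eqref{eq:K-v-mu-pp} and the transport estimate \eqref{eq:103tao}, not deduced from Bessel.

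Second, and more seriously, your proposed mechanism for the sharp constant in \eqref{eq:bessel} cannot work as written. You want tent-type $\eta_\vvv$ with transition width $\sim\varrho$ so that $\sum_\vvv\eta_\vvv^2\le 1+O(\varpi^2)$. But then $\wh{\eta_\vvv}$ spreads at scale $\varrho^{-1}$, the \emph{same} scale as the angular boxes, so multiplication by $\eta_\vvv$ destroys the $\mmu$-orthogonality and the ``$O(1)$-adjacent $\mmu,\mmu'$'' overlap you quote costs a dimensional constant, not $1+O(\varpi)$. The paper avoids this tension by a different device: the spatial cutoff $\mathbf{\Upsilon}_\vvv$ is Schwartz with $\wh{\mathbf{\Upsilon}}_0$ \emph{compactly supported} at scale $\varpi^2\varrho^{-1}\ll\varrho^{-1}$ and $\sum_\vvv\mathbf{\Upsilon}_\vvv\equiv1$ by Poisson summation (no attempt at small physical overlap), while the angular cutoff is the tent $\indic_\mathsf{Q}*\indic_\mathsf{Q}=\int_\mathsf{Q}\indic_{\mathsf{Q}+\zeta}\,d\zeta$, written as an average of \emph{sharp} projections $\mathbb{P}^{\mmu,\varrho}_\zeta$. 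After Minkowski in $\zeta$, one splits via the set $\mathscr{O}$ that stays $\varpi^2\varrho^{-1}$ away from box boundaries: on $\mathscr{O}$ the tiny Fourier support of $\wh{\mathbf{\Upsilon}}_\vvv$ preserves strict $\mmu$-orthogonality and the $\vvv$-sum is handled pointwise via $\sum_\vvv\mathbf{\Upsilon}_\vvv=1$, giving constant exactly $1$; off $\mathscr{O}$ one accepts the crude $O(1)$-overlap bound but pays only the $O(\varpi^2)$ measure of the boundary layer. This good-set/bad-set split, driven by the mismatch $\varpi^2\varrho^{-1}\ll\varrho^{-1}$, is the idea you are missing; the tent-partition route with $\sum\chi^2\le 1+O(\varpi)$ does not substitute for it.
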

\begin{proof}
	By translation in the physical spacetime and the modulation in the frequency space, we may take $\zzz_Q$ to be  the origin of $\R^{n+1}$.\smallskip

	\noindent\emph{- Step 1, The decomposition map.}
	Let  $\mathbf{ \Upsilon}_0\in\mathcal{S}(\R^{n})$
	be a non-negative Schwartz function
	such that $\wh{\mathbf{ \Upsilon}}_0$ is supported in $B(0,1/10):=\{\xi\in\R^{n};\,|\xi|\le 1/10\}$ and  $\wh{\mathbf{ \Upsilon}}_0$ equals to one on $B(0,1/20)$.  Put
	$
	\mathbf{\Upsilon}_\vvv(x)=\mathbf{ \Upsilon}_0(\varpi^2\varrho^{-1}(x-\vvv)),\, \vvv\in\mathcal{ L}\,.
	$
	By  the Poisson summation formula, we have
	$\sum_{\vvv\in\mathcal{ L}}\mathbf{\Upsilon}_\vvv(x)=1$ for all $x\in \R^n$.\smallskip

	For each
	$(\vvv,\mmu)\in\mathcal{ L}\times\varGamma$, let
	for every $x$ and $\,\xi=(\xi',\xi_n)\in\varSigma^*$
	$$
	a_{\vvv,\mmu}(x,\xi)=\mathbf{\Upsilon}_\vvv(x)
	\bigl(\indic_\mathsf{Q}*\indic_\mathsf{Q}\bigr)(\varrho(\xi_n^{-1}\xi'-\mmu))\,.
	$$
	For any $f\in\mathcal{S}(\R^{n})$ such that
	$\supp  \,\wh{f} \subset\varSigma$,
	define
	$$
	f_{\vvv,\mmu}(x)=\int_{\varSigma^*}
	e^{ i \langle x,\,\xi\rangle}
	a_{\vvv,\mu}(x,\xi) \,\wh{f}(\xi)\;d\xi .
	$$
	Then, by Fubini's theorem, we have $$f(x)=\sum_{(\vvv,\mmu)\in\mathcal{ L}\times\varGamma} f_{\vvv,\mmu}(x),\,\quad\forall x\in\R^{n}.$$
	For each $\Phi_\sigma-$wave $F^\sigma$,
	apply this decomposition with $f(x)=F^\sigma(x,0)$.
	By using the  linearity of the operator $S^{\Phi_\sigma}_\varSigma(t)$, we have
	\begin{equation}
	\label{eq:wp-decomp}
	S^{\Phi_\sigma}_\varSigma(t)f(x)=\sum_{(\vvv,\mmu)\in\mathcal{ L}\times\varGamma}
	S^{\Phi_\sigma}_\varSigma(t)f_{\vvv,\mmu}(x)\,.
	\end{equation}
	Letting  $F^\sigma_{T}(x,t)=S^{\Phi_\sigma}_\varSigma(t) f_{\vvv,\mu}(x)$ with $(\vvv,\mmu)=(\vvv_T,\mmu_T)$, we get  \eqref{eq:wpd}. The relaxed margin property \eqref{eq:re-mar} holds with a fixed $C$ independent of $\Phi_\sigma$. \smallskip
	
	\noindent\emph{- Step 2. Proof of \eqref{eq:up}\eqref{eq:rapdcay}\eqref{eq:strongcon}.}
	Let  $\mathbb{B}:=\{\xi\in\R^{n}; |\xi|\le 50 n\}$ and let
	$\alpha\in C_c^\infty(\R^{n-1})$ be 
	equal to one on $\mathbb{B}$, vanishing outside  $O(1)-$neighborhood of $\mathbb{B}$. Let $\beta\in C_0^\infty([\frac{1}{100},100])$ be equal to one on $[\frac{1}{50},50]$. Let
	$p(\xi',\xi_n)=\alpha(\xi')\beta(\xi_n)$
	and
	$$
	K_{\mmu}^{\sigma,\varrho}(t,x):=\int_{\R^{n}}
	e^{ i\bigl(x'\cdot \xi'+x_{n}\xi_n+t\Phi_\sigma(\xi)\bigr)}
	p\bigl(\varrho (\xi_n^{-1}\xi'-\mmu),\xi_n\bigr)
	\,d\xi 'd\xi_n.
	$$

	We have $F_{T}^\sigma(t,\cdot)=K^{\sigma,\varrho}_{\mmu_T}(t-t_0,\cdot)*F^\sigma_{T}(t_0,\cdot)$ for every $T\in\mathbf{T}_\sigma$ and all $t,t_0\in \R$. 
	Changing variables, we get
	$$
	K_{\mmu}^{\sigma,\varrho}(t,x)={\varrho^{-(n-1)}}
	\int_{\R^{n}}
	e^{ i\bigl(\varrho^{-1}x'\cdot \xi'+(x'\cdot\mmu+x_{n})\xi_n+t\,\Phi_\sigma(\varrho^{-1}\xi'+\mmu\xi_n,\xi_n)\bigr)}
	p\Bigl(\frac{\xi'}{\xi_n},\,\xi_n\Bigr)\,d\xi' d\xi_n.
	$$
	Taylor
	expanding using homogeneity and the $\mathscr{E}_\sigma$ conditions along with $\xi_n\sim 1$
	\begin{multline*}
	\quad\quad\Phi_\sigma(\varrho^{-1}\xi'+\mmu\xi_n,\,\xi_n)=\xi_n\Phi_\sigma(\varrho^{-1}\xi_n^{-1}\xi'+\mmu,1)\\
	=\xi_n\Phi_\sigma(\mmu,1)+{\varrho^{-1}}\nabla_{\xi'}\Phi_\sigma(\mmu,1)\cdot {\xi'}+{R}^{-1}\Psi^R_{\sigma,\mmu}(\xi)\,,
	\end{multline*}	
	where $\Psi_{\sigma,\mmu}^{R}$ is homogeneous of order one and depends on the Hessian of $\Phi_\sigma$ with
	$$
	\sup_{0<\sigma\le \sigma_{ 0}}
	\sup_{\mmu\in\varGamma,\, |\mmu|\lesssim 1}
	\sup_{\xi\,:\,p(\xi)\ne 0}\bigl|\,\partial_\xi^{\gamma}\Psi_{\sigma,\mmu}^R(\xi)\,\bigr|\le C_\gamma
	$$
	for any multi-indices $\gamma=(\gamma_1,\cdots,\gamma_n)\in \Z_{\ge 0}^{n}$,
	 we have
	\begin{equation}
	\label{eq:exp-K}
	K_{\mmu}^{\sigma,\varrho}(t,x)=\varrho^{-(n-1)}
		\int_{\R^{n}}
		e^{ i\varPhi^{\sigma,\varrho}_{\mmu}(x,t;\xi)}
		p\Bigl(\frac{\xi'}{\xi_n},\xi_n\Bigr) 
	\,d\xi' d\xi_n,
	\end{equation}
	where 
	$$
	\varPhi_{\mmu}^{\sigma,\varrho}(x,t;\xi):=
	\varrho^{-1}(x'+t\nabla_{\xi'}\Phi_\sigma(\mmu,1))\cdot \xi'+(x_{n}+x'\cdot\mmu+t\Phi_\sigma(\mmu,1))\xi_n+\frac{t}{R}\Psi_{\sigma,\mmu}^{R}(\xi).
	$$
	Letting
	$$
	\mathscr{L}_{\,\mathtt{I}}=\frac{1+ i^{-1}\bigl(\varrho^{-1}(x'+t\nabla_{\xi'}\Phi_\sigma(\mmu,1))+\frac{t}{R}\nabla_{\xi'}\Psi_{\sigma,\mmu}^{R}(\xi)\bigr)\cdot\nabla_{\xi'}}{1+
		\bigl |\varrho^{-1}(x'+t\,\nabla_{\xi'}\Phi_\sigma(\mmu,1)+\frac{t}{R}\,\nabla_{\xi'}\Psi_{\sigma,\mmu}^{R}(\xi)\bigr|^2},
	$$
		$$
	\mathscr{L}_{\,\mathtt{II}}=\frac{1+i^{-1}(x_{n}+x'\cdot\mmu+t\Phi_\sigma(\mmu,1)+\frac{t}{R}\partial_{\xi_n}\Psi_{\sigma,\mmu}^R(\xi)
		)\;\partial_{\xi_n}}{1+
		\bigl|x_{n}+x'\cdot\mmu+t\Phi_\sigma(\mmu,1)+\frac{t}{R}\partial_{\xi_n}\Psi_{\sigma,\mmu}^R(\xi)\bigr|^2},
	$$
	such that  for any integer $M\ge 1$, we have $$\mathscr{L}_{\,\mathtt{I}}^Me^{ i\varPhi_{\mmu}^{\sigma,\varrho}(x,t;\xi)}=
	\mathscr{L}_{\,\mathtt{II}}^Me^{ i\varPhi_{\mmu}^{\sigma,\varrho}(x,t;\xi)}=e^{ i\varPhi_{\mmu}^{\sigma,\varrho}(x,t;\xi)}.$$
	Noting that
	\begin{multline*}
	1+
	\bigl |\varrho^{-1}(x'+t\nabla_{\xi'}\Phi_\sigma(\mmu,1))+\frac{t}{R}
	\nabla_{\xi'}\Psi_{\sigma,\mmu}^R(\xi)\bigr|+\bigl|x_{n}+x'\cdot\mmu+t\Phi_\sigma(\mmu,1)+\frac{t}{R}\,\partial_{\xi_n}\Psi_{\sigma,\mmu}^{R}(\xi)\bigr|\\
	\gtrsim\frac{1}{C_0}\Bigl(\
		\bigl |\varrho^{-1}(x'+t\nabla_{\xi'}\Phi_\sigma(\mmu,1))\bigr|+\bigr|+\bigl|x_{n}+x'\cdot\mmu+t\Phi_\sigma(\mmu,1)\bigr|\Bigr)
	\end{multline*}
	holds for all $\xi\in\supp \,p$
	and all $|t|\le C_0 R$,
	we have by $M$-fold integration by parts
	\begin{equation}
	\label{eq:K-v-mu-pp}
	\bigl| K^{\sigma,\varrho}_{\mmu}(t,x)\bigr|
	\lesssim_M C_0^M \;\varrho^{-(n-1)}\Bigl(1+\varrho^{-1}\bigl|x+t\,\nabla\Phi_\sigma(\mmu,1)\bigr|\Bigr)^{-M},
	\end{equation}
	where we have used the fact that $\Phi_\sigma(\mmu,1)=\mmu\cdot\nabla_{\xi'}\Phi_\sigma(\mmu,1)+\partial_{\xi_n}\Phi_\sigma(\mmu,1)$
	and  for each $t$, the $x'$ variable is essentially
	contained in the  ball centered at $-t\nabla_{\xi'}\Phi_\sigma(\mmu,1)$ of radius $\varrho$.
With \eqref{eq:K-v-mu-pp} and using the same argument of dyadic decomposition and summing up convergent geometric series exploring the decay properties of bump functions,  c.f. \cite[(103) P.262]{TaoMZ},
we get
\begin{equation}
\label{eq:103tao}
\|F^\sigma\|_{L^2(D(x_D,t_D;\varrho))}\lesssim_{C_0}\|w_{D(x_D-(t-t_D)\nabla\Phi_\sigma(\mmu,1),t;\varrho)}F^\sigma(\cdot,t)\|_{L^2(\R^n)}
\end{equation}
for all disk $D$ and $\Phi_\sigma-$wave $F^\sigma$ such that the spatial Fourier transform is supported on $\xi\in\varSigma$
with $|\xi_n^{-1}\xi'-\mmu|\le 50n \varrho^{-1}$.  Using \eqref{eq:103tao}, $$F^\sigma_T(x,0)=\mathbf{ \Upsilon}_{\vvv_T}(x)\int_\mathsf{Q} 
\mathbb{P}^{\mmu_T,\varrho}_\zeta [F^\sigma(\cdot,0)](x)d\zeta,$$
and that  $\mathbb{P}^{\mmu_T,\varrho}_\zeta F^\sigma(x,t)$ is a $\Phi_\sigma-$wave  with $\mmu=\mmu_T$, satisfying the conditions for the $F^\sigma$ in \eqref{eq:103tao}, we obtain \eqref{eq:up} by Cauchy-Schwarz.

With \eqref{eq:K-v-mu-pp} \eqref{eq:103tao},
the estimates \eqref{eq:rapdcay}  \eqref{eq:strongcon}  can be easily verified by using the same proof in \cite{TaoMZ}.
\smallskip

\noindent\emph{- Step 3, Proof of the Bessel type inequality.}
The argument is same to \cite{TaoMZ,Y22} and we only sketch it.	 By Plancherel's theorem and Minkowski's inequality, the left side of \eqref{eq:bessel} is less or equal to 
	\begin{equation}
	\label{eq:pf-bes}
	 \int_{\mathsf{Q}}\Biggl( \sum_{\varDelta}
	\Bigl\|\sum_{T\in\mathbf{T}_\sigma} m^{\varDelta,\,T} \mathbf{\Upsilon}_{\vvv_T}(\cdot)\, \mathbb{P}^{\mmu_T,\varrho}_{\zeta}f(\cdot)\Bigr\|_{2}^2
	\Biggr)^{\frac{1}{2}}d\zeta\,,
	\end{equation}
	with $f(x)=F^\sigma(x,0)$.
	For each  $\mmu \in\varGamma$, define
	$
	\mathbb{B}_{\varrho,\mmu}=\mmu+\frac{1}{\varrho}\mathsf{Q}
	$
	and let
	$$
	\mathscr{O}=\bigcup_{\mmu\in \varGamma}
	\Bigl\{\zeta\in\mathbb{B}_{\varrho,\mmu};\; \mathsf{dist}(\zeta, \R^{n-1}\setminus \mathbb{B}_{\varrho,\mmu})\ge \varpi^2\varrho^{-1}\Bigr\}.
	$$
	For any $\eta\in \varrho^{-1}\mathsf{Q}$, define
	$$
	\varPi_{\mathscr{O}+\eta}:\;f(x)\mapsto
	\int e^{ i(x\cdot\xi)}
	\indic_{\{\xi\,;\;\xi_n^{-1}\xi'\in\mathscr{O}+\eta\}}(\xi)
	\wh{f}(\xi)\;d\xi \,.
	$$
	Splitting $f=\bigl(\varPi_{\mathscr{O}+\eta} f\bigr)+\bigl(\mathrm{Id}-\varPi_{\mathscr{O}+\eta} \bigr)f$ and using the triangle inequality, we have that
   the right side of  \eqref{eq:pf-bes} $\le \mathbf{I}+\mathbf{II}$, where
	\begin{align}
	\label{eq:main}
	\mathbf{I}=\,&\varrho^{n-1}\int_{\varrho^{-1}\mathsf{Q}}\Biggl( \sum_{\varDelta}
	\Bigl\|\sum_{T\in\TT_\sigma} m^{\varDelta,T} \mathbf{\Upsilon}_{\vvv_T}(\cdot)\, \mathbb{P}^{\mmu_T,\varrho}_{\,\varrho\eta}\circ \varPi_{\mathscr{O}+\eta} f(\cdot)\Bigr\|_{2}^2
	\Biggr)^{\frac{1}{2}}d\eta,\\
	\label{eq:error}
	\mathbf{II}=\,&\varrho^{n-1}\int_{\varrho^{-1}\mathsf{Q}}\Biggl( \sum_{\varDelta}
	\Bigl\|\sum_{T\in\TT_\sigma} m^{\varDelta,\,T} \mathbf{\Upsilon}_{\vvv_T}(\cdot)\, \mathbb{P}^{\mmu_T,\varrho}_{\,\varrho\eta}\circ\bigl( \mathrm{Id}- \varPi_{\mathscr{O}+\eta}\bigr) f(\cdot)\Bigr\|_{2}^2
	\Biggr)^{\frac{1}{2}}d\eta.
	\end{align}
	For $\mathbf{I}$, we use the  Plancherel theorem and the strict orthogonality  from the pairwise $\varpi^{2}\varrho^{-1}$-separateness between the  connected components of $\mathscr{O}$, which allows a petite amplification in the frequency space due to convolution with  $\wh{\mathbf{\Upsilon}}_{\vvv_T}$.
	For $\mathbf{II}$, by using the Plancherel theorem and the almost orthogonality followed with Cauchy-Schwarz and  Fubini, we have
	$$
	\sup_{\xi\in\varSigma^*}\,
	\varrho^{n-1}\int_{\varrho^{-1}\mathsf{Q}}\Bigl(1-\indic_{\mathscr{O}+\eta}(\xi'/\xi_n)\Bigr)\;d\eta\lesssim_n\,\varpi^2\,.
	$$
 We refer to \cite{TaoMZ,Y22} for more details. The proof is complete.
\end{proof}

\begin{remark}
	For waves of high frequency, say $2^m$ with $m\ge 1$, we may use the $\mathbb{D}_{2^m}$ normalizing the frequency to be one and apply Lemma \ref{lem:wp-d}  then scaling back. 
\end{remark}

\begin{remark}
	In the above, we  have always assumed
	the initial data of waves are Schwartz functions.
	This assumption can be removed by standard density argument.
\end{remark}

\subsection{Spatial localizations using  the Huygens principle}
\label{sec:huy}

We  introduce the spatial localization
operators as in \cite{TaoMZ} 
and summarize the properties useful in this paper to capture the energy concentration of waves. Since for any fixed $\Phi_\sigma$, the proof are same to \cite{TaoMZ},
we shall omit most of technical parts.

For any $c>0$, let  $c D:=D(x_D,\,t_D; c\, r_D)$.
Define the  \emph{disk exterior} of $D$ as
\begin{equation}
\label{eq:ext}
D^{\mathsf{ext}}=D^{\mathsf{ext}}(x_D,t_D;r_D)=\{(x,t_D):\, |x-x_D|>r_D\}.
\end{equation}
For any $F\in C^\infty_{loc}(\R^{n+1})$ and disk $D$,  we write
\begin{align*}
\|F\|_{L^2(D^{\mathsf{ext}})}:=\;\Bigl(\int_{|x-x_D|> r_D}\bigl|F(x,t_D)\bigr|^2dx\Bigr)^\frac{1}{2}.
\end{align*}
\subsubsection{The localization operator $P_D$}
We introduce the localization operator $P_D$ as in \cite{TaoMZ}.
Let $\mathbf{\Upsilon}_0(x)\ge 0$ be the Schwartz function in the proof of Lemma \ref{lem:wp-d}.
For every $r>0$,  set $\mathbf{ \Upsilon}_r(x)=r^{-n}\mathbf{\Upsilon}_0(r^{-1}x)$.

\begin{definition}
	Let $F^\sigma(x,t)$ be a $\Phi_\sigma-$wave
	of frequency $1$ for some $\Phi_\sigma\in\mathscr{E}_\sigma$.
	For any disk $D=D(x_D,t_D;r_D)$, we define $P_D^{\Phi_\sigma} F^\sigma$ at time $t_D$ as
	$$
	P_D^{\Phi_\sigma} F^\sigma(t_D)=\Bigl(\indic_D*\mathbf{ \Upsilon}_{r_D^{1-\frac{1}{N}}}\Bigr)\,F^\sigma(t_D),
	$$
	and  $\forall \;t\in\R$
	$$\bigl(P_D^{\Phi_\sigma} F^\sigma\bigr)(t)=S^{\Phi_\sigma}(t-t_D)\bigl[\,P_D^{\Phi_\sigma} F^\sigma(t_D)\bigr].$$
	For any $\Phi_\sigma-$wave  $F^\sigma$ of frequency $2^m$, we define
	$P_{D}^{\Phi_\sigma,2^m}F^\sigma=\mathbb{D}^{-1}_{2^m}\circ P_D^{\Phi_\sigma}\circ \mathbb{D}_{2^m} F^\sigma$.
\end{definition}

We shall abuse notations below, writing $P_D^{\Phi_\sigma}$ instead of $P_D^{\Phi_\sigma,2^m}$ for short and the meaning is clear from context.

We show next that the operator $P_D^{\Phi_\sigma}$ localizes a $\Phi_\sigma-$wave to $D_+$ while $1-P_D^{\Phi_\sigma}$ localizes it to the exterior of $D_-$.
\begin{lemma}
	\label{lem:Tao10.2}
	Let $m\ge 0$ and $D$ be a disk with radius $r_D=r\ge C_02^{-m}$. Then, for any $\Phi_\sigma-$waves $F^\sigma$ with frequency $2^m$, $P_D^{\Phi_\sigma} F^\sigma$ is a $\Phi_\sigma-$wave of frequency $2^m$ and the same colour to $F^\sigma$, satisfies the margin estimate
	$$
	\mathsf{marg}(P_D^{\Phi_\sigma} F^\sigma)\ge
	\mathsf{marg}(F^\sigma)-C_0(2^m r)^{-1+\frac{1}{N}},
	$$
	and the following local energy estimates 
	\begin{align}
	\label{2.16}\| P_D^{\Phi_\sigma} F^\sigma\|_{L^2(D_+^\mathsf{ext})}\lesssim& \; (2^mr)^{-N}\EEE(F^\sigma)^{1/2}\\
	\label{2.17}	
	\|(1-P_D^{\Phi_\sigma})F^\sigma\|_{L^2(D_-)}\lesssim & \; (2^mr)^{-N}\EEE(F^\sigma)^{1/2}\\
	\label{2.18}
	\sup_{t}\|P_D^{\Phi_\sigma} F^\sigma(t)\|_{L^2(\R^{n})}^2\le&\; \|F^\sigma\|_{L^2(D_+)}^2+\O( (2^mr)^{-N} \EEE(F^\sigma))\\
	\label{2.19}\sup_{t} \|(1-P_D^{\Phi_\sigma})F^\sigma(t)\|_{L^2(\R^{n})}^2\le&\; \|F^\sigma\|_{L^2( D^{\mathsf{ext}}_-)}^2+\mathcal{O}((2^mr)^{-N}\EEE(F^\sigma))\\
	\label{2.20}\sup_{t} \|(1-P_D^{\Phi_\sigma} )F^\sigma(t)\|_{L^2(\R^n)}\le &\; \EEE(F^\sigma)^{\frac{1}{2}},\; \sup_{t}\|P_D^{\Phi_\sigma}  F^\sigma(t)\|_{L^2(\R^n)}\le \EEE(F^\sigma)^{\frac{1}{2}},
	\end{align}
	where $D_\pm^\mathsf{ext}$ is the exterior of $
	D_{\pm}:=D\bigl(x_D,t_D;r(1\pm (2^mr)^{-\frac{1}{2N}})\bigr).
	$ The implicit constants are all independent of $\sigma$.
\end{lemma}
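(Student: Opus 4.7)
My plan is to reduce everything to the unit-frequency case via the scaling $\mathbb{D}_{2^m}$, and then read off all the stated bounds from elementary pointwise estimates on the multiplier $\chi_D := \indic_D * \mathbf{\Upsilon}_{r_D^{1-1/N}}$ that defines $P_D^{\Phi_\sigma}$.

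Conjugation with $\mathbb{D}_{2^m}$ converts a frequency $2^m$ wave into a frequency $1$ wave and rescales the disk radius from $r$ to $2^m r$ in the normalized frame; it therefore suffices to treat waves of frequency $1$ with $r \geq C_0$ and then replace $r$ by $2^m r$ in the conclusions. At this reduced scale, $P_D^{\Phi_\sigma}F^\sigma$ is a $\Phi_\sigma$-wave of the same colour by construction, since we multiply the Cauchy datum at time $t_D$ by the smooth function $\chi_D$ and then propagate by the unitary semigroup $S^{\Phi_\sigma}$, which preserves the support on $\mathcal{C}^{\Phi_\sigma}$. The margin inequality follows from the fact that $\wh{\chi_D}(\xi) = \wh{\indic_D}(\xi)\,\wh{\mathbf{\Upsilon}}_0(r^{1-1/N}\xi)$ is supported in a ball of radius $\lesssim r^{-1+1/N}$, so the spatial Fourier support of $P_D^{\Phi_\sigma}F^\sigma(\cdot, t_D)$ lies in an $O(r^{-1+1/N})$-neighbourhood of the original support, and un-scaling yields the loss $C_0(2^m r)^{-1+1/N}$ in the distance to $\mathcal{C}^{\Phi_\sigma}\setminus \mathcal{C}^{\Phi_\sigma}_{\varSigma_j^*}$.

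The four $L^2$ bounds \eqref{2.16}--\eqref{2.19} reduce to pointwise estimates on $\chi_D$ and $1-\chi_D$. Using the Schwartz decay of $\mathbf{\Upsilon}_0$ at rate $(1+|\cdot|)^{-N^{100}}$, one finds $|\chi_D(x)| \lesssim (1 + r^{-(1-1/N)}\mathsf{dist}(x,D))^{-N^{50}}$ for $x \notin D$ and the symmetric estimate for $|1-\chi_D(x)|$ when $x \in D$. Since $r \geq C_0$ is enormous, the rim between $D_-$ and $D_+$ has thickness $r^{1-1/(2N)}$, exceeding the smoothing scale $r^{1-1/N}$ by a factor $r^{1/(2N)}$; feeding this ratio into the two pointwise bounds gives $|\chi_D| \lesssim r^{-N}$ on $D_+^{\mathsf{ext}}$ and $|1-\chi_D| \lesssim r^{-N}$ on $D_-$, which yield \eqref{2.16} and \eqref{2.17} by Cauchy--Schwarz against $\EEE(F^\sigma)^{1/2}$. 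Estimates \eqref{2.18} and \eqref{2.19} follow by splitting $\R^n$ into $D_+$ and $D_+^{\mathsf{ext}}$ (respectively $D_-^{\mathsf{ext}}$ and $D_-$), using $|\chi_D| \leq 1$ on the main piece and the rapid decay on the tail, and invoking the time-invariance of $\|S^{\Phi_\sigma}(t)\cdot\|_{L^2}$; the bound \eqref{2.20} is the trivial Plancherel inequality $\|\chi_D F\|_2, \|(1-\chi_D)F\|_2 \leq \|F\|_2$.

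The main points requiring care are uniformity in $\Phi_\sigma$ and the bookkeeping between the scales $r$ and $2^m r$. Both resolve cleanly, however: $\chi_D$ does not depend on $\Phi_\sigma$ at all, and $S^{\Phi_\sigma}$ is unitary on $L^2(\R^n)$ with operator norm exactly $1$, so every implicit constant in \eqref{2.16}--\eqref{2.20} depends only on $n$ and $N$; meanwhile the scaling step is a clean change of variables that merely converts the powers of $r$ in the frequency-one estimates into powers of $2^m r$. I do not anticipate a genuine obstacle.
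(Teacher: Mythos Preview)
Your proposal is correct and follows exactly the standard argument from Tao \cite{TaoMZ} that the paper defers to (the paper's own proof reads ``The argument is exactly same as \cite{TaoMZ} and we omit it''). The only cosmetic points are that $\mathbf{\Upsilon}_0$ is Schwartz and hence decays at any polynomial rate rather than specifically $(1+|\cdot|)^{-N^{100}}$, and the step you call ``Cauchy--Schwarz'' is just the pointwise bound $\|\chi_D F\|_{L^2(\Omega)}\le \|\chi_D\|_{L^\infty(\Omega)}\|F\|_{L^2}$; neither affects the validity of the argument.
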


\begin{proof}
	The argument is exactly same as \cite{TaoMZ} and we omit it.
	See also \cite{Y22}.	
\end{proof}

\subsubsection{Concentration of red and blue waves on conic sets of the same colour}

\begin{lemma}
	\label{lem:conctr}
		There exists a constant $C>0$ such that the following statement holds.
	Let $R\ge r\ge C_02^{\max\{m,k\}}$ with $k,m\ge 0$. Let $D= D(\zzz_D;\,r)$ with $\zzz_D=(x_D,\,t_D)$. For any red and blue $\Phi_\sigma-$waves $F^\sigma$ and $G^\sigma$
	of frequency $2^m$ and $2^k$ respectively,  we have for any $M\ge 1$
	\begin{align}
	\label{2.21}
	\|P_D^{\Phi_\sigma} F^\sigma\|_{L^\infty\bigl(\R^{n+1}_{x,t}\setminus \mathbf{ \Lambda}^{\Phi_\sigma}_1(\zzz_D,\,Cr+R^{\frac{1}{N}})\bigr)}\lesssim_M\,R^{-M}
	\EEE(F^\sigma)^{1/2},\\
	\label{2.22'}
	\|(1-P_D^{\Phi_\sigma})F^\sigma\|_{L^\infty(Q(\zzz_D,\,r/C))}\lesssim_M\, r^{-M} \,\EEE(F^\sigma)^{1/2},\,\\
	\label{2.21-b}
	\|P_D^{\Phi_\sigma} G^\sigma\|_{L^\infty\bigl(\R^{n+1}_{x,t}\setminus\mathbf{ \Lambda}^{\Phi_\sigma}_{2}(\zzz_D,\,Cr+R^{\frac{1}{N}})\bigr)}\lesssim_M\, R^{-M}
	\EEE(G^\sigma)^{1/2},\\
		\|(1-P_D^{\Phi_\sigma})G^\sigma\|_{L^\infty(Q(\zzz_D,\,r/C))}\lesssim_M\, r^{-M} \,\EEE(G^\sigma)^{1/2}\,	.
	\end{align}
	 All the implicit structure constants are independent of $\sigma\in(0,\sigma_{ 0}]$.
\end{lemma}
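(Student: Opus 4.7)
The plan is to reduce every statement to the frequency-one case by applying the scaling operator $\mathbb{D}_{2^m}$ (respectively $\mathbb{D}_{2^k}$); the hypothesis $r \ge C_0 2^m$ then becomes the uniform lower bound $r \ge C_0$, placing us in the scope of Proposition \ref{pp:KKK}. All four estimates are pointwise bounds that I will extract from the kernel identity
\[
P_D^{\Phi_\sigma} F^\sigma(x,t) = \int \mathcal{K}_1^{\Phi_\sigma}(x-y,t-t_D)\,\bigl[(\indic_D * \mathbf{\Upsilon}_{r^{1-1/N}})\,F^\sigma(\cdot,t_D)\bigr](y)\,dy,
\]
together with the rapid decay $|\mathcal{K}_1^{\Phi_\sigma}(x,t)| \lesssim_M (1+\mathsf{dist}((x,t),\mathbf{ \Lambda}_1^{\Phi_\sigma}))^{-M}$ from Proposition \ref{pp:KKK}, which is uniform in $\sigma \in (0,\sigma_0]$.

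For \eqref{2.21} I would use that $(\indic_D * \mathbf{\Upsilon}_{r^{1-1/N}}) F^\sigma(\cdot,t_D)$ is, modulo rapidly decaying tails, supported in the $r^{1-1/N}$-enlargement of $D$. If $(x,t) \notin \mathbf{ \Lambda}_1^{\Phi_\sigma}(\zzz_D, Cr + R^{1/N})$ with $C$ chosen sufficiently large (absorbing the $r^{1-1/N}$ slack), then for every $y$ in this essential support the point $(x-y,t-t_D)$ has distance at least of order $R^{1/N}$ from $\mathbf{ \Lambda}_1^{\Phi_\sigma}$. Choosing $M$ sufficiently large in the kernel decay and absorbing the tails via Cauchy--Schwarz against the $L^2$ norm of the initial data yields the claimed $R^{-M}\EEE(F^\sigma)^{1/2}$ bound.

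For \eqref{2.22'} I apply the same identity to $(1-P_D^{\Phi_\sigma})F^\sigma$, whose initial data at time $t_D$ is, up to $r^{-N}$ tails coming from \eqref{2.17} and the rapid decay of $\mathbf{\Upsilon}_0$, supported in $\{y : |y-x_D| > r\}$. For $(x,t) \in Q(\zzz_D, r/C)$ the triangle inequality gives $|x-y| \gtrsim r$ while $|t-t_D| \le r/(2C)$. Since every generator of $\mathbf{ \Lambda}_1^{\Phi_\sigma}$ is of the form $(s\omega,s)$ with $\omega \in \Xi_1^{\Phi_\sigma,*}$ bounded above and away from zero, the spatial and temporal components of points on $\mathbf{ \Lambda}_1^{\Phi_\sigma}$ are comparable; hence $(x-y,t-t_D)$ is at distance $\gtrsim r$ from $\mathbf{ \Lambda}_1^{\Phi_\sigma}$ once $C$ is chosen large, and the kernel decay produces the desired $r^{-M}\EEE(F^\sigma)^{1/2}$ bound.

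The bounds \eqref{2.21-b} and the final inequality for the blue wave $G^\sigma$ follow by identical arguments with $\mathcal{K}_2^{\Phi_\sigma}$ and $\Xi_2^{\Phi_\sigma,*}$ replacing their red counterparts, so no new ideas are needed. Uniformity in $\sigma \in (0,\sigma_0]$ is automatic from Proposition \ref{pp:KKK} and from the fact that $\Xi_1^{\Phi_\sigma,*}, \Xi_2^{\Phi_\sigma,*}$ remain bounded and well separated with constants depending only on $\sigma_0$. The only mildly delicate point is the bookkeeping of the three distinct length scales --- the smoothing scale $r^{1-1/N}$, the target buffer $R^{1/N}$, and the decay rates $r^{-M}$ versus $R^{-M}$ --- so that a single choice of the exponent $M$ in the kernel estimate of Proposition \ref{pp:KKK} delivers all four bounds consistently; this is the main (but easily surmountable) obstacle.
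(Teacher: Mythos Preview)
Your proposal is correct and follows essentially the same strategy as the paper: reduce to frequency one by the dilation $\mathbb{D}_{2^m}$, write $P_D^{\Phi_\sigma}F^\sigma$ via the convolution kernel $\mathcal{K}_1^{\Phi_\sigma}$ from Proposition~\ref{pp:KKK}, and split the $y$-integration into a ``local'' piece near $D$ (where the geometric hypothesis forces $(x-y,t-t_D)$ far from $\mathbf{\Lambda}_1^{\Phi_\sigma}$ and the kernel decays) and a ``global'' tail (where the Schwartz decay of $\mathbf{\Upsilon}_0$ takes over). The paper's own proof is only a two-line sketch referring to \cite{Y22}, but the mechanism it names---kernel decay \eqref{eq:KKK} for one region and rapid decay of $\mathbf{\Upsilon}_0$ for the other---is exactly what you implement. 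One small point worth making explicit in your write-up: to obtain the full $R^{-M}$ (rather than merely $r^{-M}$) from the mollifier tail in \eqref{2.21}, place the local/global threshold at $|y-x_D|\sim Cr+R^{1/N}$ rather than at $|y-x_D|\sim r$; then on the tail the rescaled argument $|y-z|/r^{1-1/N}$ is at least of order $R^{1/N^2}$ (check the two cases $r\gtrless R^{1/N}$), which is what converts the Schwartz decay into the claimed power of $R$.
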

\begin{proof}
	We only prove  \eqref{2.21}\eqref{2.22'}. Consider $m=0$ first. The argument is the same as in \cite{Y22} by using the explicit expression for 
	the kernel function of $P_D^{\Phi_\sigma}$ and dividing the integration in question into the local and global parts, where for the global parts, we may use   \eqref{eq:KKK} to get the rapid decay while for the 
	local part, we combine it with the constraints outside the conic set in  \eqref{2.21} to use the rapid decay of $\mathbf{ \Upsilon}_0$.  The proof of \eqref{2.22'} is similar. We refer to Lemma 3.5 of \cite{Y22} for details.
	For the general case, we use the dilation operator $\mathbb{D}_{2^m}F^\sigma$ reducing the question to the above case with $R,r$ replaced by $2^mR,\,2^mr$.
	The proof is complete.
\end{proof}

\section{The red and blue wave tables}
\label{sec:wavetables}

\subsection{Notion of the wave tables on a spacetime  cube}We recall first  the  red and blue wave tables in \cite{TaoMZ}.

A \emph{wave table operation} $\mathfrak{F}_j$ on $Q$ with \emph{depth} $j$ is a linear map such that 
for any $\Phi_\sigma$-wave  $F^\sigma$ for some $\sigma$, we have $\mathfrak{F}_j:F^\sigma\mapsto\mathcal{ F}^{\sigma}_j$, with $\mathcal{ F}_j^\sigma$ being of the  matrix form
\begin{equation}
\label{eq:w-t}
\mathcal{ F}_j^\sigma=\bigl(\mathcal{ F}^{\sigma,(\qq)}\bigr)_{\qq\in\mathcal{ Q}_j(Q)},
\end{equation}
which is  called the \emph{wave table} of $F^\sigma$ on $Q$,
where each $\mathcal{ F}^{\sigma,(\qq)}$ is a $\Phi_\sigma-$wave (of the same colour to $F^\sigma$) as well.
We define $\mathfrak{F}_0$ to be the identity map sending $F^\sigma$
to itself, which is linked in some sense to the cube $Q$.
For a wave table of $F^\sigma$ with given depth $j$, we  call each wave $\mathcal{ F}^{\sigma,(\qq)}$ the \emph{entry} of $\mathcal{F}^\sigma_j$. The Fourier support of $\mathcal{ F}^\sigma_j$ is defined as 
$$\supp\, \widetilde{\mathcal{ F}^\sigma_j}=\bigcup_{\qq\in\mathcal{ Q}_j(Q)}\supp\, \widetilde{\mathcal{ F^{\sigma,(\qq)}}}$$
and the margin of $\mathcal{ F}_j^\sigma$ is defined as in
\eqref{eq:margin} with $F^\sigma$ replaced by $\mathcal{ F}^\sigma_j$.

For any integer $m\ge 0$, we may apply the map $\mathfrak{F}_m$ to each above $\mathcal{ F}^{\sigma,(\qq)}$ to get 
$$
\mathfrak{F}_m(\mathcal{ F}^{\sigma,(\qq)})=\Bigl(\bigl(\mathcal{ F}^{\sigma,(\qq)}\bigr)^{(\qq')}\Bigr)_{\qq'\in\mathcal{ Q}_m(\qq)}.
$$
The composition $\mathfrak{F}_m\circ\mathfrak{F}_j$  defined this way by applying $\mathfrak{F}_m$ to each entry of $\mathfrak{F}_j(F^\sigma)$ gives a new wave table operation $\mathfrak{F}_{m+j}=\mathfrak{F}_m\circ\mathfrak{F}_j$ with depth $m+j$. By doing so, it is possible to obtain a chain $\{\mathfrak{F}_j\}_{j\ge 0}$ of wave table operations.

This general formulation can be extended  to any given $Q$. Moreover, there is in no way a prescribed  paradigm of constructing wave tables. One special construction that matters for us in this paper is the one in \cite{TaoMZ} by iterating $\mathfrak{F}_{j+C_0}:=\mathfrak{F}_{C_0}\circ\mathfrak{F}_{j}$ for each  $j\in C_0\Z$. To this end, we need to give an explicit construction of  $\mathfrak{F}_{C_0}$ first, to which we now turn immediately.

For any $R\ge {C_0}2^{\max\{k,m\}}$, we define $\mathfrak{R}^{\Phi_\sigma,2^m}_R$ as the set of  red waves $F^\sigma$ with  $\widetilde{F^\sigma}$  being an $L^2$ measure on 
$\mathcal{C}_1^{\Phi_\sigma,2^m}$   satisfying the margin condition
$$\mathsf{marg}(F^\sigma)\ge (100)^{-1}-(2^{-m}R)^{-\frac{1}{N}}.$$
Likewise,
define  $\mathfrak{B}^{\Phi_\sigma,2^k}_R$ as the set of  blue waves   $G^\sigma$ such that $\supp\; \widetilde{G^\sigma}\subset \mathcal{C}_2^{\Phi_\sigma,2^k}$  satisfying the margin condition
$$\mathsf{marg}(G^\sigma)\ge (100)^{-1}-(2^{-k}R)^{-\frac{1}{N}}.$$

We construct the wave tables for waves of frequency one first and then generalize the construction to at least one of the waves with high frequency.

Given two waves of opposite colours,
we shall construct $\mathfrak{F}_{C_0}$ for one wave on any given cube  with respect to the other. 
Let $Q=Q_R$ be a spacetime cube of length $R$ and $F^\sigma\in\mathfrak{R}^{\Phi_\sigma,1}_R$ and  $G^\sigma\in\mathfrak{B}^{\Phi_\sigma,1}_R$ be red and blue waves.
We construct the wave table of $F^\sigma$ of depth $C_0$  on $Q$ with respect to $G^\sigma$ based on Lemma \ref{lem:wp-d}.

Denote 
$$F^\sigma=\sum_{T_1\in\TT_1^\sigma}F^\sigma_{T_1},\quad
G^\sigma=\sum_{T_2\in\TT_2^\sigma}G^\sigma_{T_2},
$$
as the decomposition into wave-packets
for the red and blue waves given by \eqref{eq:wpd}.
 For any  $\varDelta \in \mathcal{Q}_{C_0}(Q)$, let
$$\mathsf{K}_{Q}(\varDelta)=\bigl\{\qq\subset \varDelta;\,\qq\in\mathcal{Q}_{J}(Q)\bigr\},$$ with $J\approx \log R$ such that each $\qq$ in $\mathcal{ Q}_J(Q)$ is a  cube of side-length $\sqrt{R}$. Let $\chi\in \mathcal{S}(\R_{x,t}^{n+1})$ be such that the spacetime Fourier transform of $\chi$ is compactly supported in a small neighbourhood of the origin and $\chi\ge 1$ on double of the unit ball, with the structure constants  independent of $\sigma$. Let $\mathcal{A}_\qq$ be the affine transform sending the John ellipsoid inside  $\qq$ to the unit ball such that  if we let  $\chi_\qq=\chi\circ \mathcal{A}_\qq$, then we have $\chi_\qq\ge \indic_\qq$, where $\indic_\qq$ is the characteristic function of $\qq$\,.\medskip

The red wave table $\mathcal{ F}^\sigma$ of $F^\sigma$ \emph{with respect to} the blue wave $G^\sigma$ on $Q$ , with depth $C_0$ depending on $\varpi$ and $R$ is given by 
\begin{definition}
	\label{def:w-t}
	 For each $\varDelta\in\mathcal{Q}_{C_0}(Q)$ and $T_1\in \mathbf{T}_1^\sigma$, define
	$$
	m_{T_1}^{G^\sigma,\,\varDelta}=\sum_{\qq\in\mathsf{K}_{Q}(\varDelta)}\sum_{T_2\in\mathbf{T}_2^\sigma}\bigl\|\chi_\qq\, \psi_{T_1}\,\psi_{T_2}^{-50}\,G_{T_2}^\sigma\bigr\|^2_{L^2(\R_{x,t}^{n+1})},
	$$
	and set $$m_{T_1}^{G^\sigma}=\sum_{\varDelta\in\mathcal{Q}_{C_0}(Q)}m_{T_1}^{G^\sigma,\,\varDelta}\,.$$ 
	The $(\varpi,R)-$\emph{red wave table} $\mathcal{ F}^\sigma=\mathfrak{F}_{C_0}^{\sigma,\varpi}(F^\sigma,G^\sigma;Q)$  at depth $C_0$ for $F^\sigma$
	with respect to $G^\sigma$	over $Q$  is defined as 
	$$
	\mathcal{F}^\sigma
	=\Bigl(\mathcal{F}^{\sigma,\varpi\,(\varDelta)}\Bigr)_{\varDelta\in\mathcal{Q}_{C_0}(Q)},
	$$
	with
	$$
	\mathcal{F}^{\sigma,\varpi,(\varDelta)}(x,t):=\sum_{T_1\in\mathbf{T}^\sigma_1}
	\frac{m_{T_1}^{G^\sigma,\,\varDelta}}{m_{T_1}^{G^\sigma}}F_{T_1}^\sigma(x,t),
	$$
	where $\mathcal{ F}^\sigma$ depends on $\varpi$ through  \eqref{eq:wpd}.
\end{definition}

By the linearity of $S^{\Phi_\sigma}(t)$, each $\mathcal{ F}^{\sigma,\varpi,(\varDelta)}$ is a red wave and 
$$
F^\sigma=\sum_{\varDelta\in\mathcal{Q}_{C_0}(Q)} \mathcal{F}^{\sigma,\varpi,\,(\varDelta)}\,.$$
Define the energy of $\mathcal{ F}^\sigma$ as 
$$
	\EEE(\mathcal{F}^\sigma):=	\sum_{\varDelta\in\mathcal{Q}_{C_0}(Q)} \EEE\bigl(\mathcal{F}^{\sigma,\varpi,(\varDelta)}\bigr).
$$
From the Bessel type inequality \eqref{eq:bessel}, we have
\begin{lemma}
	\label{lem:bessel-w-t}
	There is a constant $C_n$ depending only on $n$ such that we have
	\begin{align}
	\EEE(\mathcal{F}^\sigma)^{1/2}\le \;(1+C_n\,\varpi) \EEE(F^\sigma)^{1/2}\;,
	\end{align}
	for any $(\varpi,R)$-wave tables $\mathcal{F}^\sigma$ with depth $C_0$ over a spacetime cube $Q$.
\end{lemma}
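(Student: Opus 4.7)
The plan is to derive this bound as a direct consequence of the Bessel type inequality \eqref{eq:bessel} established in Lemma \ref{lem:wp-d}. Indeed, the construction of the wave table in Definition \ref{def:w-t} is engineered precisely so that the coefficients attached to each wave-packet $F_{T_1}^\sigma$ form a partition of unity indexed by $\varDelta \in \mathcal{Q}_{C_0}(Q)$.

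More concretely, I would first set
$$
m^{\varDelta, T_1} := \frac{m_{T_1}^{G^\sigma,\varDelta}}{m_{T_1}^{G^\sigma}}, \qquad T_1 \in \mathbf{T}_1^\sigma, \quad \varDelta \in \mathcal{Q}_{C_0}(Q),
$$
and verify the two hypotheses of \eqref{eq:bessel}: the non-negativity $m^{\varDelta,T_1} \ge 0$ (clear from the definition of $m_{T_1}^{G^\sigma,\varDelta}$ as a sum of squared $L^2$ norms), and the normalization
$$
\sum_{\varDelta \in \mathcal{Q}_{C_0}(Q)} m^{\varDelta,T_1} \;=\; \frac{1}{m_{T_1}^{G^\sigma}} \sum_{\varDelta \in \mathcal{Q}_{C_0}(Q)} m_{T_1}^{G^\sigma,\varDelta} \;=\; 1,
$$
which follows at once from the definition $m_{T_1}^{G^\sigma} = \sum_\varDelta m_{T_1}^{G^\sigma,\varDelta}$. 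A small caveat is the case $m_{T_1}^{G^\sigma}=0$: then $F_{T_1}^\sigma$ contributes nothing to any entry $\mathcal{F}^{\sigma,\varpi,(\varDelta)}$, so we may simply discard such $T_1$ from the sum (or set $m^{\varDelta,T_1}=0$) without affecting either side.

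Unraveling the definition of the entries
$$
\mathcal{F}^{\sigma,\varpi,(\varDelta)} \;=\; \sum_{T_1 \in \mathbf{T}_1^\sigma} m^{\varDelta,T_1}\, F_{T_1}^\sigma,
$$
and summing their energies, \eqref{eq:bessel} yields immediately
$$
\EEE(\mathcal{F}^\sigma)^{1/2} \;=\; \Bigl(\sum_{\varDelta} \EEE\Bigl(\sum_{T_1} m^{\varDelta,T_1} F_{T_1}^\sigma\Bigr)\Bigr)^{1/2} \;\le\; (1+C_n\varpi)\, \EEE(F^\sigma)^{1/2},
$$
with $C_n$ the constant from \eqref{eq:bessel}, which depends only on $n$.

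There is essentially no obstacle here: all the work has already been done in proving \eqref{eq:bessel} (via the $\mathscr{O}$-splitting and almost-orthogonality argument), and the present lemma is a clean corollary once one recognizes the coefficient family $\{m^{\varDelta,T_1}\}$ as an admissible weight system. The only thing to keep in mind is that the analogous statement for the blue wave table (built with respect to a red wave) requires the symmetric construction, but the proof is verbatim the same with the roles of $\TT_1^\sigma$ and $\TT_2^\sigma$ exchanged.
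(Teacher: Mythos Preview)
Your proposal is correct and matches the paper's approach exactly: the paper simply states that Lemma~\ref{lem:bessel-w-t} follows from the Bessel type inequality~\eqref{eq:bessel}, and you have spelled out precisely why, identifying the weights $m^{\varDelta,T_1}=m_{T_1}^{G^\sigma,\varDelta}/m_{T_1}^{G^\sigma}$ and verifying the normalization hypothesis.
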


By symmetry,
we may also define the blue wave table $\mathcal{G}^\sigma=\mathfrak{S}^{\sigma,\varpi}_{C_0}(G^\sigma,F^\sigma;Q)$ of $G^\sigma$ \emph{with respect to} any red wave $F^\sigma$ on the cube $Q$ as above.

The above constructions naturally extends 
to the case when one of the waves is of high frequency.
For waves both with high frequencies, we may normalize the red wave to be frequency one by using $\mathbb{D}_{2^m}$ and apply the same construction as above, then undo the scaling.

\subsection{No waste bilinear $L^2$-Kakeya type estimate}

Let $\varpi\in [0, 2^{-C_0}]$ and $j\ge 0$ be an integer. 
For any spacetime cube $Q$,
define the $(\varpi,j)-$\emph{interior} of $Q$  as
$$\mathfrak{I}^{\varpi,j}(Q):=\bigcup_{\qq\in\mathcal{Q}_j(Q)}(1-\varpi)\qq\,.$$
\begin{prop}
	\label{pp:C-0 quilt}
	Let $F^\sigma\in\mathfrak{R}^{\Phi_\sigma,2^m}_R$ and $G^\sigma\in \mathfrak{B}^{\Phi_\sigma,2^k}_R$.
	For any $\varpi\in (0, 2^{-C_0}]$, let $Q=Q_R$ with $R\ge C_0 \max\Bigl(2^{-m},2^{m-2k}\Bigr)$,   there exists a  constant $C$, depending only on $\varSigma_1,\varSigma_2,\sigma_{ 0}$ and independent of $C_0$,
	such that if $F^\sigma$ and $G^\sigma$ are red and blue waves satisfying the margin conditon
	$$
\mathsf{marg}(F^\sigma)\ge(2^mR)^{-1/2},\;\mathsf{marg}(G^\sigma)\ge (2^kR)^{-1/2}
	$$
	 with $\EEE(F^\sigma)=\EEE(G^\sigma)=1$. Let $\mathcal{F}^\sigma,\mathcal{G}^\sigma$ be the $(\varpi,R)-$wave tables for  $F^\sigma$ and $G^\sigma$ with depth $C_0$ over $Q^*:=CQ$ respectively given by Definition \ref{def:w-t}. Then,  we have
		\begin{equation}
	\label{eq:bil-kakeya}
	\max_{\varDelta,\varDelta'\in\mathcal{Q}_{C_0}(Q^*)}
	\bigl\|\mathcal{F}^{\sigma,\varpi,(\varDelta)}\, \mathcal{G}^{\sigma,\varpi,(\varDelta')}\bigr\|_{L^2_{t,x}(\mathfrak{I}^{\varpi,C_0}(Q^*)\setminus\{\varDelta'\})}\le\,C\,\varpi^{-C} \Bigl(\frac{2^m}{R}\Bigr)^{\frac{n-1}{4}}\,.
	\end{equation}
\end{prop}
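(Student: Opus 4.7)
The plan is to adapt the no-waste bilinear $L^2$-Kakeya argument of Tao \cite{TaoMZ} to the family $\{\Phi_\sigma\}_{\sigma \in (0,\sigma_0]}$, taking care that every structural constant is uniform in $\sigma$. First I would expand each wave-table entry via Definition \ref{def:w-t} into its wave-packet form
$$
\mathcal{F}^{\sigma,\varpi,(\varDelta)} = \sum_{T_1 \in \TT_1^\sigma} \frac{m_{T_1}^{G^\sigma,\varDelta}}{m_{T_1}^{G^\sigma}}\, F^\sigma_{T_1}, \qquad \mathcal{G}^{\sigma,\varpi,(\varDelta')} = \sum_{T_2 \in \TT_2^\sigma} \frac{m_{T_2}^{F^\sigma,\varDelta'}}{m_{T_2}^{F^\sigma}}\, G^\sigma_{T_2},
$$
and bound the squared $L^2$ norm on $\mathfrak{I}^{\varpi,C_0}(Q^*)\setminus\varDelta'$ by a weighted sum over tube pairs of $\|F^\sigma_{T_1} G^\sigma_{T_2}\|_{L^2}^2$, where the rapid decay \eqref{eq:rapdcay} allows me to treat each wave-packet as essentially supported on its tube with Schwartz-tail errors.

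Next I would invoke the geometric Kakeya input. Since $\Xi_1^{\Phi_\sigma}$ and $\Xi_2^{\Phi_\sigma}$ are uniformly transversal in $\sigma$ for $\sigma_0$ small enough, a consequence of \eqref{eq:cond-error-ctrl} and the disjointness of $\varSigma_1,\varSigma_2$, a transversal pair of red and blue $\varrho$-tubes meets in a region of extent $\varrho$ in $n+1$ dimensions; combined with Corollary \ref{coro:opp} on the red and blue light-ray surfaces, this produces the Kakeya gain per transversal pair after Schur-test organization, and unwinding the $\mathbb{D}_{2^m}$ normalization (which rescales $Q_R$ to $Q_{2^m R}$) promotes this to the claimed $(2^m/R)^{(n-1)/4}$ factor. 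The central step that kills the otherwise logarithmic loss is Tao's no-waste observation: by construction $m_{T_1}^{G^\sigma,\varDelta}$ measures exactly the $L^2$-mass of the blue wave-packets sitting in $\varDelta$ near the red tube $T_1$, so that a Cauchy--Schwarz in $\varDelta$ (and symmetrically in $\varDelta'$) collapses through \eqref{eq:strongcon} onto the clean product $\EEE(F^\sigma)\EEE(G^\sigma)=1$ with no pigeonhole overhead, provided the degenerate diagonal $\varDelta=\varDelta'$ is excluded, which is precisely the role of the region restriction $\mathfrak{I}^{\varpi,C_0}(Q^*)\setminus\varDelta'$.

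The hardest part will be verifying that every implicit constant is genuinely uniform in $\sigma \in (0,\sigma_0]$. Specifically, the kernel bound \eqref{eq:K-v-mu-pp}, the quantitative transversality angle between $\mathbf{ \Lambda}_1^{\Phi_\sigma}$ and $\mathbf{ \Lambda}_2^{\Phi_\sigma}$, and the implicit constant in Corollary \ref{coro:opp} must depend only on $\varSigma_1,\varSigma_2,\sigma_0,n$. This uniformity is ensured by the uniform derivative control in \eqref{eq:cond-error-ctrl} together with compactness of $\varSigma_1,\varSigma_2$, once $\sigma_0$ is fixed small at the outset so that each Monge patch $\mathcal{C}_j^{\Phi_\sigma}$ stays in a fixed $C^2$-neighbourhood of the oblique cone $\mathcal{C}_{\mathtt{obl}}$. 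The remaining bookkeeping of $\varpi^{-C}$ losses is absorbed through the Bessel inequality \eqref{eq:bessel} and the non-accumulation estimate \eqref{eq:strongcon}, which together dispose of the overhead from the wave-packet decomposition at scale $\varrho=R^{1/2}$.
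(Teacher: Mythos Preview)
Your proposal is correct and follows essentially the same route as the paper, which reduces by scaling to $m=0$ and then invokes Tao's argument for (73) of \cite{TaoMZ} verbatim, noting that the only two places where transversality enters---the bilinear $L^2$ estimate for low-dispersion waves (Lemmas~14.2--14.3 of \cite{TaoMZ}) and the opposite-colour energy estimate Lemma~\ref{lem:opposite}---are uniform in $\sigma\in(0,\sigma_0]$ once $\sigma_0$ is fixed small. One small correction: the exclusion $\setminus\varDelta'$ in \eqref{eq:bil-kakeya} is not about avoiding the diagonal $\varDelta=\varDelta'$ in the index set (the maximum runs over \emph{all} pairs), but about integrating spatially away from the home box of the blue entry $\mathcal{G}^{\sigma,\varpi,(\varDelta')}$, which is what allows the weights $m_{T_2}^{F^\sigma,\varDelta'}$ to convert tube incidences into the non-accumulation bound \eqref{eq:strongcon} without pigeonholing.
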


\begin{proof}
  The argument is exactly same with (54) of \cite{TaoMZ}.
  First, by scaling invariance, one may take $m=0$ so that \eqref{eq:bil-kakeya} is essentially  (73) of \cite{TaoMZ}.
  Then, by using the explicit construction formula in Definition \ref{def:w-t} with $R$ replaced by $2^mR$ there, and \eqref{eq:strongcon} of Lemma \ref{lem:wp-d} to conclude the proof.	There are two places where the transversality condition 
  is used for the family of conic surfaces $\mathcal{ C}^{\Phi_\sigma}_1$ and $\mathcal{ C}^{\Phi_\sigma}_2$. One is the bilinear $L^2-$estimate in low dispersions and the other one is the energy estimate on conic regions of opposite colour of Lemma \ref{lem:opposite}.
  For the first one, we refer to Lemma 14.2 and Lemma 14.3  in \cite{TaoMZ} for more details. We apply our analogue counterpart for $\Phi_\sigma$ surfaces with the high frequency wave at frequency $2^{k-j}$ of angular dispersion $O((2^jR)^{-1/2})$.
   It is easy to see that these two properties can be made uniform for all $\Phi_\sigma\in\mathscr{E}_\sigma$ and all $\sigma\in(0,\sigma_{ 0}]$ by taking $\sigma_{ 0}$ small enough. 
\end{proof}

\subsection{Persistence on the non-concentration of energy}

\begin{definition}
	\label{def:energy-con}
	For any $r>0$, a spacetime cube $Q$, a red wave $F^\sigma$ and a blue wave $G^\sigma$, we define 
	 the $r-$\emph{energy-concentration} of $F^\sigma, G^\sigma$
	 over $Q$ to be the quantity
	$$
	\EEE_{r,Q}(F^\sigma,G^\sigma)=\max\Bigl\{\frac{1}{2}\EEE(F^\sigma)^{1/2}\EEE(G^\sigma)^{1/2},\; \sup_D\|F^\sigma\|_{L^2(D)}\|G^\sigma\|_{L^2(D)}\Bigr\},
	$$
	where $D$ ranges over all disks with $r_D=r$ and $t_D\in \mathsf{LS}(Q)$
\end{definition} 

This quantity is very robust in taking advantage of the Huygens' principle of wave equations,
although this property does not hold anymore for Schr\"odinger equations. However, one can recover this property
by using the method of descent  introduced in \cite{TaoMZ}. From this perspective, there seems a principle of adding dimensions to treat other surfaces
as embeded submannifolds of conic surfaces, whenever this propoerty fails and then use  method of descent to get the expected result as a limiting case.

\begin{proposition}
	\label{pp:persist}
	Let $ R\ge C_0\max\{2^{-m},2^{m-2k}\}$ with $m,k\ge 0$  and $Q=Q_R$ be a  spacetime cube of side-length $R$. For each $r\ge R^{\frac{1}{2}+\frac{1}{N}}2^{-m(\frac{1}{2}-\frac{1}{N})}$, we define $$r^\#:=r(1-C_0(2^mr)^{-\frac{1}{3N}}).$$
	There exists a constant $C>0$, such that  if $F^\sigma\in\mathfrak{R}^{\Phi_\sigma,2^m}_R, G^\sigma\in\mathfrak{B}^{\Phi_\sigma,2^k}_R$
	with $\EEE(F^\sigma)=\EEE(G^\sigma)=1$
	and $\mathcal{F}^\sigma$ is
	a $(\varpi,R^{1/2})$-wave tables for $F^\sigma$ with respect to $ G^\sigma$ over $Q$ with depth $C_0$, then
	\begin{align*}
	\sup_{\varDelta\in\mathcal{Q}_{C_0}(Q)}
	\EEE_{r^\#,5Q}\bigl(\mathcal{F}^{\sigma,\varpi,(\varDelta)},
	{G}^{\sigma}\bigr)
	\le (1+C\varpi)\,\EEE_{r,5Q}(F^\sigma,G^\sigma)+\mathcal{O}\bigl(\varpi^{-C}R^{-N/2}\bigr)
	\end{align*}
	holds for all  $\varpi\in (0,2^{-C_0}]$ and all $Q$.
\end{proposition}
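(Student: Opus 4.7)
The plan is to adapt Tao's argument for Proposition 3.6 of \cite{TaoMZ} to the family of conic surfaces $\mathcal{C}^{\Phi_\sigma}$, transferring the local $L^2$-mass of each wave-table entry $\mathcal{F}^{\sigma,\varpi,(\varDelta)}$ on a disk $D^\#$ of radius $r^\#$ to that of the original wave $F^\sigma$ on a slightly larger disk of radius $\le r$, via the spatial-localization operator $P_D$ from Lemma \ref{lem:Tao10.2}; the $(1+C\varpi)$ loss is furnished by the Bessel-type inequality \eqref{eq:bessel}. By rescaling with $\mathbb{D}_{2^m}$ one reduces to $m=0$ throughout. The contribution $\tfrac{1}{2}\EEE(\mathcal{F}^{\sigma,\varpi,(\varDelta)})^{1/2}\EEE(G^\sigma)^{1/2}$ to $\EEE_{r^\#,5Q}$ is immediately controlled by Lemma \ref{lem:bessel-w-t}, so the real task is the disk-indexed part.

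Fix $\varDelta\in\mathcal{Q}_{C_0}(Q)$ and a disk $D^\#=D(x_{D^\#},t_{D^\#};r^\#)$ with $t_{D^\#}\in\mathsf{LS}(5Q)$. Pick an auxiliary disk $D$ concentric with $D^\#$ of radius $\tilde r$ lying in the band $r^\#(1+\tilde r^{-1/(2N)})\le\tilde r\le r(1+\tilde r^{-1/(2N)})^{-1}$; this interval is nonempty because the gap $r-r^\#$ is of order $r\cdot r^{-1/(3N)}$, which dominates the margin $r^{1-1/(2N)}$. With such a choice, $D^\#\subset D_-$ and $D_+\subset D(x_{D^\#},t_{D^\#};r)$ in the notation of Lemma \ref{lem:Tao10.2}. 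Splitting $\mathcal{F}^{\sigma,\varpi,(\varDelta)}=P_D\mathcal{F}^{\sigma,\varpi,(\varDelta)}+(1-P_D)\mathcal{F}^{\sigma,\varpi,(\varDelta)}$ and using \eqref{2.17} combined with Lemma \ref{lem:bessel-w-t} absorbs the $(1-P_D)$-piece on $D^\#\subset D_-$ into the $\O(\varpi^{-C}R^{-N/2})$ error; the same treatment applied to $G^\sigma$ together with \eqref{2.18} gives $\|G^\sigma\|_{L^2(D^\#)}\le\|G^\sigma\|_{L^2(D_+)}+\O(r^{-N/2})$.

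It remains to bound $\EEE(P_D\mathcal{F}^{\sigma,\varpi,(\varDelta)})^{1/2}$. By linearity of $P_D$ and Definition \ref{def:w-t},
$$
P_D\mathcal{F}^{\sigma,\varpi,(\varDelta)}=\sum_{T_1\in\TT^\sigma_1}\lambda^\varDelta_{T_1}\,P_DF^\sigma_{T_1},\qquad \lambda^\varDelta_{T_1}:=m^{G^\sigma,\varDelta}_{T_1}/m^{G^\sigma}_{T_1}\in[0,1],
$$
with $\sum_\varDelta\lambda^\varDelta_{T_1}=1$. The tube-concentration \eqref{eq:rapdcay} together with the definition of $P_D$ shows that $P_DF^\sigma_{T_1}$ agrees with the wave-packet $(P_DF^\sigma)_{T_1}$ of $P_DF^\sigma$ up to a rapidly decaying $L^2$-error. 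Applying the Bessel inequality \eqref{eq:bessel} to the wave $P_DF^\sigma$ with the weights $\{\lambda^\varDelta_{T_1}\}$, and discarding all $\varDelta'\ne\varDelta$ terms on the left-hand side, yields
$$
\EEE\Bigl(\sum_{T_1}\lambda^\varDelta_{T_1}(P_DF^\sigma)_{T_1}\Bigr)^{1/2}\le(1+C\varpi)\EEE(P_DF^\sigma)^{1/2}\le(1+C\varpi)\bigl(\|F^\sigma\|_{L^2(D_+)}+\O(R^{-N/2})\bigr),
$$
where \eqref{2.18} is used in the last step. Since $D_+\subset D(x_{D^\#},t_{D^\#};r)$, one has $\|F^\sigma\|_{L^2(D_+)}\|G^\sigma\|_{L^2(D_+)}\le\EEE_{r,5Q}(F^\sigma,G^\sigma)$, and assembling the estimates completes the argument.

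The main obstacle is justifying the replacement $P_DF^\sigma_{T_1}\to(P_DF^\sigma)_{T_1}$ in $L^2$ with an acceptable remainder: this commutation is not exact and requires \eqref{eq:rapdcay}, \eqref{eq:strongcon}, and the relaxed-margin property \eqref{eq:re-mar} in order to control contributions from tubes grazing $\partial D$ as well as from tubes well away from $D$. Uniformity in $\sigma\in(0,\sigma_0]$ is automatic, since every quantitative estimate in Section \ref{sect:w-p-d} has implicit constants depending only on $\varSigma_1,\varSigma_2,\sigma_0$.
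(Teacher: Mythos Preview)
Your overall strategy---reducing to $m=0$, fixing a disk $D^\#$ of radius $r^\#$, and transferring the local $L^2$-mass of $\mathcal{F}^{\sigma,\varpi,(\varDelta)}$ on $D^\#$ to that of $F^\sigma$ on a concentric disk of radius $r$ via the localization operator and the Bessel inequality---matches the paper's. The difference is the \emph{order of operations}, and this is where your proposal develops a genuine gap.

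The paper splits $F^\sigma = P_{D'}F^\sigma + (I-P_{D'})F^\sigma =: F_1 + F_2$ \emph{before} forming the wave table, and then uses the linearity of the map $F^\sigma \mapsto \mathcal{F}^{\sigma,\varpi,(\varDelta)}$ (the weights $m^{G^\sigma,\varDelta}_{T_1}/m^{G^\sigma}_{T_1}$ depend only on $G^\sigma$ and the tubes, not on $F^\sigma$) to write $\mathcal{F}^{(\varDelta)} = \mathcal{F}^{(\varDelta)}_1 + \mathcal{F}^{(\varDelta)}_2$. The main term $\mathcal{F}^{(\varDelta)}_1$ is then a genuine wave-table entry of the single wave $F_1 = P_{D'}F^\sigma$, so Lemma~\ref{lem:bessel-w-t} applies directly to give $\EEE(\mathcal{F}^{(\varDelta)}_1)^{1/2}\le(1+C\varpi)\EEE(P_{D'}F^\sigma)^{1/2}$, and \eqref{2.18} finishes. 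The error term $\|\mathcal{F}^{(\varDelta)}_2\|_{L^2(D)}$ is handled by a tube dichotomy (far tubes via \eqref{eq:rapdcay}, near tubes via \eqref{2.17} applied to $(I-P_{D'})F^\sigma$).

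You instead apply $P_D$ \emph{after} forming the wave table and then need the commutation $P_D F^\sigma_{T_1} \approx (P_DF^\sigma)_{T_1}$. But $\{P_D F^\sigma_{T_1}\}_{T_1}$ are not the wave packets of any single wave, so the Bessel inequality \eqref{eq:bessel} does not apply to $\sum_{T_1}\lambda^\varDelta_{T_1}P_DF^\sigma_{T_1}$ directly. Your proposed remedy---replacing each $P_DF^\sigma_{T_1}$ by $(P_DF^\sigma)_{T_1}$---is precisely the step you label the ``main obstacle,'' and you do not carry it out. Proving it would require the same near/far tube analysis as the paper's treatment of $\mathcal{F}^{(\varDelta)}_2$, so you gain nothing and lose the clean application of Bessel. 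The fix is simply to move the split to $F^\sigma$ itself and invoke the linearity of the wave-table construction; then the commutation issue disappears entirely. (Your side remark on $G^\sigma$ is also unnecessary: $\|G^\sigma\|_{L^2(D^\#)}\le\|G^\sigma\|_{L^2(D(x_{D^\#},t_{D^\#};r))}$ is trivial by inclusion.)
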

\begin{proof}
	The argument is the same to \cite{TaoMZ} and we only sketch it. By scaling, it suffices to consider the $m=0$ case.
	Let
	$$
	D=D(\zzz_0,r^\#),\;D'=D\Bigl(\zzz_0,r\bigl(1-\frac{C_0}{2}r^{-\frac{1}{3N}}\bigr)\Bigr),\;
	D''=D(\zzz_0,r)
	$$
	with $\zzz_0=(x_0,t_0)$, $t_0\in \mathsf{LS}(5Q)$, $D\subsetneqq D'_-\subsetneqq D'\subsetneqq D'_+\subsetneqq D''$. 
	
	Write $F^\sigma=P^{\Phi_\sigma}_{D'}F^\sigma+(I-P^{\Phi_\sigma}_{D'})F^\sigma:F_1+F_2$ and by the linearity of $F^\sigma\mapsto \mathcal{ F}^\sigma$, write the corresponding decomposition for each $\mathcal{ F}^{(\varDelta)}$ short for $\mathcal{ F}^{\sigma,\varpi,(\varDelta)}$ as 
	$\mathcal{ F}^{(\varDelta)}_1+\mathcal{ F}^{(\varDelta)}_2$.
	The $\mathcal{ F}^{(\varDelta)}_1$ part follows from 
	Lemma \ref{lem:bessel-w-t}	and \eqref{2.18}.
   The contribution of the $\mathcal{ F}_2^{(\varDelta)}$
   part is bounded by the error term. Indeed, the estimate follows from using the condition $r\ge R^{\frac{1}{2}+\frac{1}{N}}$ 
	by distinguishing the following two cases for the wave packets $F^\sigma_T$ for $T\in\TT_\sigma$ in the definition of $\mathcal{ F}^{(\varDelta)}$
	\begin{itemize}
		\item Case A. $\mathsf{dist}(T,D)\ge R^{\frac{1}{2}+\frac{1}{100 N}}$,
		\item Case B. $\mathsf{dist}(T,D)\le R^{\frac{1}{2}+\frac{1}{100 N}}$,
	\end{itemize}
	For Case A, we use \eqref{eq:rapdcay}\eqref{eq:strongcon} to see the contributions of $\qq$ that do intersect with $D$ is bounded by the error term.
	For Case B, we use \eqref{2.17} to get the result. We refer to \cite[P.30]{Y22} for more details.

\end{proof}

\subsection{Wrap-up}
To state and demonstrate the main result of this section, let us introduce more notations.\medskip

Given  a wave  $F^\sigma$ and a cube $Q=Q_R$ of side-length $R\ge C_0 2^k$, assume that $k\in C_0\Z$ large and that 
we have defined $\mathfrak{F}_j$ for each $j\in C_0\Z\cap [0,k]$, inductively by  $\mathfrak{F}_{j+C_0}=\mathfrak{F}_{C_0}\circ\mathfrak{F}_j$ so that 
$\mathcal{ F}^\sigma_j=\mathfrak{F}_j(F^\sigma)$
is a wave table of $F^\sigma$ over $Q$ with depth $j$. For each $j$, we have obtained inductively a chain of wave tables
$$
\mathcal{ F}^\sigma_0\prec \mathcal{ F}^\sigma_{C_0}\prec\cdots
\prec \mathcal{ F}^\sigma_{j'}\prec\cdots\prec\mathcal{ F}^\sigma_j.
$$
For any $j'<j$, we say $\mathcal{ F}^\sigma_j$ is a \emph{relative} wave table of $\mathcal{ F}^\sigma_{j'}$
with depth $j-j'$. For each entry $\mathcal{ F}^{\sigma,(\qq)}$
of $\mathcal{ F}^{\sigma}_j$ with $\qq \in\mathcal{ Q}_j(Q)$,
there exists a unique $\qq'\in \mathcal{ Q}_{j'}(Q)$ such that
$\qq\in\mathcal{ Q}_{j-j'}(\qq')$ and $\mathcal{ F}^{\sigma,(\qq)}=\bigl(\mathcal{ F}^{\sigma,(\qq')}\bigr)^{(\qq)}$, obtained as an entry of  $\mathfrak{F}_{j-j'}(\mathcal{ F}^{\sigma,(\qq')})$.
\medskip

For each $j$ and $0\le j'<j$,
define the $j'-$\emph{quilt} of $\mathcal{ F}^\sigma:=\mathcal{F}^\sigma_j$  on $Q$ as
$$
\bigl[\mathcal{F}^\sigma\bigr]_{j'}=\sum_{\qq'\in\mathcal{Q}_{j'}(Q)}\indic_{\qq'}\, \bigl|\mathcal{F}^{\sigma,\,(\qq')}\bigr|\,,
$$
where $\indic_{\qq'}$ is the indicator function of $\qq'$
and $\mathcal{ F}^{\sigma,(\qq')}$ is the  entry of $\mathcal{ F}^\sigma_{j'}$. 
Then
\begin{equation}
\label{eq:tao18}
\max_{\qq\in\mathcal{ Q}_j(Q)}\indic_{\qq}\,\bigl|\mathcal{ F}^{\sigma,(\qq)}\bigr|\le \bigl[\mathcal{ F}^\sigma\bigr]_j
\le\cdots\le
\bigl[\mathcal{ F}^\sigma\bigr]_{j'}\le \cdots \le
\bigl[\mathcal{ F}^\sigma\bigr]_{0}=|F^\sigma|\indic_{Q}.
\end{equation}
Let 
$$
X^{\varpi,k}(Q)=\bigcap_{j=C_0}^{k} \mathfrak{I}^{\varpi2^{-(k-j)/N},j}(Q)\,.
$$
Recall that for any  $\Phi_\sigma\in\mathscr{E}_\sigma$
and $\zzz_0=(x_0,t_0)\in \R^{n+1}$, 
$$\mathbf{\Lambda}_{\mathsf{purple}}^{\Phi_\sigma}(\zzz_0,r)=\mathbf{ \Lambda}_1^{\Phi_\sigma}(\zzz_0,r)\cup \mathbf{ \Lambda}_2^{\Phi_\sigma}(\zzz_0,r),$$
with $\mathbf{ \Lambda}_i^{\Phi_\sigma}(\zzz_0,r)$  given in Lemma \ref{lem:opposite}  for $i=1,2$.
Define
$$\mathcal{X}_{\Phi_\sigma,\zzz_0}^{\varpi,k,r}(Q)=X^{\varpi,k}(Q)\cap \,\mathbf{\Lambda}^{\Phi_\sigma}_{\mathsf{purple}}(\zzz_0,r)\,.$$

For any $u\in C^\infty(\R^{n}_{x}\times\R_t)$ and any measurable subset $\Omega\subset\R^{n+1}$, such that $\Omega=\bigcup_{t\in \mathcal{I}}\bigl(\varPi_{t}\times \{t\}\bigr)$ for some $\mathcal{I}\subset\R$, we let
$$
\|u\|_{Z(\Omega)}:=\Bigl(\int_{\mathcal{I}}\Bigl(\int_{\varPi_{t}}|u(x,t)|^{s}dx\Bigr)^\frac{q}{s}dt\Bigr)^{\frac{1}{q}},
$$
with  $(q,s)=\bigl(q_c^+,r_c^-\bigr)\in\mathbf{\Gamma}$,
where for any $\gamma\in\R$, we denote $\gamma^+$ ( resp. $\gamma^-$) as a real number greater (resp. smaller) than  but  sufficiently close to $\gamma$. Thus, the sense of $Z\bigl(X^{\varpi,k}(Q)\bigr)$
and $Z\bigl(\mathcal{X}_{\Phi_\sigma,\zzz_0}^{\varpi,k,r}(Q)\bigr)$ is clearly. The assumption on $k\in C_0\Z$ is only for a technical reason and can be removed by scaling.
\medskip

Based on the above preparations, we  arrive at 
\begin{prop}
	\label{pp:4.1-tao}
	There exists a constant $C>0$ depending only on $n$ such that the following statement holds.

	Let $R\ge C_0 2^k$ and $\varpi\in (0,2^{-C_0}]$.
	Let $F^\sigma$ and $G^\sigma$ be red and blue waves
	associated to $\Phi_\sigma\in\mathscr{E}_\sigma$ for some $\sigma\in (0,\sigma_0]$
	with frequency $1$ and $2^k$ respectively,
	which obey the energy normalization 
	$$
	\EEE(F^\sigma)=\EEE(G^\sigma)=1
	$$
	and the relaxed margin condition 
	\begin{equation}
	\label{eq:re-m}
\min\Bigl\{	\mathsf{marg}(F^\sigma),\mathsf{marg}(G^\sigma)\Bigr\}\ge {100}^{-1}-2(2^k R^{-1})^{\frac{1}{N}}.
	\end{equation}
	For any spacetime cube $Q$ of sidelength $CR$, there 
	exists a red wave table $\mathcal{F}^\sigma$
	on $Q$ with depth $k$ and frequency one, and 
	a blue wave table $\mathcal{G}^\sigma$ on $Q$ with depth $C_0$ and frequency $2^k$, both depending on $\varpi$
	such that
	\begin{itemize}
		\item[-] The margin condition holds
		\begin{equation}
		\label{eq:m-f-wt}
		\min\Bigl\{\mathsf{marg}(\mathcal{F}^\sigma),\,
		\mathsf{marg}(\mathcal{G}^\sigma)\Bigl\}
		\ge 100^{-1}-(2^{k+C_0}/R)^{-\frac{1}{N}}\,,
		\end{equation}
		\item[-] The energy estimate holds
		\begin{equation}
		\label{eq:en-est}
		\max\Bigl\{
		\EEE(\mathcal{F}^\sigma),\EEE(\mathcal{G}^\sigma)\Bigr\}\le 1+C\varpi\,,
		\end{equation}
		\item[-] Effective approximation via $(k,C_0)-$quilts product
		\begin{equation}
		\label{eq:appkC_0}
	\quad\;\;\,	\|F^\sigma G^\sigma\|_{Z(X^{\varpi,k}(Q))}\le
		\bigl\|\bigl[\mathcal{F}^\sigma\bigr]_k\bigl[\mathcal{G}^\sigma\bigr]_{C_0}\bigr\|_{Z(X^{\varpi,k}(Q))}+\varpi^{-C}2^{k\gamma(q)}\,,
		\end{equation}
		\item[-] Refined approximation on conic regions
		\begin{multline}
		\label{eq:refinedapp}
		\quad\quad\quad	\|F^\sigma G^\sigma\|_{Z(\mathcal{X}^{\varpi,k,r}_{\Phi_\sigma,\zzz_0}(Q))}\le
		\bigl\|\bigl[\mathcal{F}^\sigma\bigr]_k\bigl[\mathcal{G}^\sigma\bigr]_{C_0}\bigr\|_{Z(\mathcal{X}^{\varpi,k,r}_{\Phi_\sigma,\zzz_0}(Q))}\\
		+\varpi^{-C}2^{k\gamma(q)}\Bigl(1+\frac{R}{2^kr}\Bigr)^{-\frac{1}{N}}\,
		\end{multline}
		for all $\zzz_0\in\R^{n+1}$ and $r>0$,
		\item[-] Persistence of the non-concentration of energy
		\begin{equation}
		\label{eq:persist}
		\EEE_{r^\#,5Q} (\mathcal{F}^\sigma,\mathcal{G}^\sigma)\le
		\EEE_{r,5Q}(F^\sigma,G^\sigma)+C\varpi+\varpi^{-C}R^{-\frac{N}{10}}\,,
		\end{equation}
		for all $r\ge C R^{\frac{1}{2}+\frac{3}{N}}$, with $r^\#:=r(1-C_0r^{-\frac{1}{3N}})$,
	\end{itemize} 
	where
	 $\gamma(q)=\max\Bigl(\frac{1}{q}-\frac{1}{2},0\Bigr)$ and the energy concentration for $\mathcal{ F}^\sigma,\mathcal{ G}^\sigma$ is defined as 
	 $$
	 \EEE_{r,Q}(\mathcal{ F}^\sigma,\mathcal{ G}^\sigma):=\max_{\varDelta,\varDelta'\in\mathcal{Q}_{C_0}(Q)}\EEE_{r,Q}\bigl(\mathcal{ F}^{(\varDelta)},\mathcal{ G}^{(\varDelta')}\bigr)
	 $$
\end{prop}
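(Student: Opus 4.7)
The plan is to iterate the depth-$C_0$ wave table construction of Definition \ref{def:w-t} to build the red wave table $\mathcal{F}^\sigma$ of depth $k$, while the blue wave table $\mathcal{G}^\sigma := \mathfrak{S}^{\sigma,\varpi}_{C_0}(G^\sigma, F^\sigma; Q)$ requires only a single depth-$C_0$ construction, since the natural wavepacket scale for $G^\sigma$ is already $\sqrt{R/2^k}$. For each $j \in C_0\Z \cap [0,k]$, set $R_j := R \cdot 2^{-j}$ and $\varpi_j := \varpi \cdot 2^{-(k-j)/N}$, the latter tuned to match the scale-$j$ interior $\mathfrak{I}^{\varpi_j, j}$ appearing in $X^{\varpi,k}(Q)$. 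Starting from $\mathcal{F}^\sigma_0 = F^\sigma \cdot \indic_Q$, I inductively apply $\mathfrak{F}^{\sigma,\varpi_j}_{C_0}(\,\cdot\,, G^\sigma; \qq^*)$ entry-by-entry to each $\mathcal{F}^{\sigma,(\qq)}$ (with $\qq^*$ a modest enlargement of $\qq$) to produce $\mathcal{F}^\sigma_{j+C_0}$, and set $\mathcal{F}^\sigma := \mathcal{F}^\sigma_k$.

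The margin bound \eqref{eq:m-f-wt} follows from the relaxed margin property \eqref{eq:re-mar}: each depth-$C_0$ step at scale $R_j$ loses at most $C R_j^{-1/2}$ in margin, and summing the geometric series $\sum_j R_j^{-1/2}$ together with the hypothesis \eqref{eq:re-m} yields the claim. Similarly the energy bound \eqref{eq:en-est} follows by iterating the Bessel-type inequality of Lemma \ref{lem:bessel-w-t}: each stage amplifies the energy by the factor $(1 + C_n\,\varpi_j)^2$, and $\prod_j (1 + C_n\,\varpi_j) \le 1 + C\varpi$ since $\sum_j \varpi_j$ is dominated by $\varpi$ by geometric convergence.

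The heart of the proposition, and the main obstacle, is the pair of approximation estimates \eqref{eq:appkC_0} and \eqref{eq:refinedapp}. At each inductive stage $j \to j+C_0$, on each $\qq' \in \mathcal{Q}_j(Q)$, write
\[
\bigl|\mathcal{F}^{\sigma,(\qq')} G^\sigma\bigr|\, \indic_{\mathfrak{I}^{\varpi_j, C_0}(\qq')}
\le \bigl[\mathcal{F}^\sigma_{j+C_0}\bigr]_{j+C_0}\bigl[\mathcal{G}^\sigma\bigr]_{C_0}\, \indic_{\qq'} + E_j,
\]
where the error $E_j$ consists of the off-diagonal products $\mathcal{F}^{\sigma,(\varDelta)}\,\mathcal{G}^{\sigma,(\varDelta')}$ with $\varDelta \neq \varDelta'$ over $\qq'^*$, plus tube-tail contributions. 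The off-diagonal term is controlled in $L^2_{t,x}$ by the no-waste bilinear $L^2$-Kakeya estimate of Proposition \ref{pp:C-0 quilt}, whose pigeonhole-free bound produces \emph{no} logarithmic loss, while the tube tails are negligible by \eqref{eq:rapdcay}\eqref{eq:strongcon}. Converting the $L^2_{t,x}$ error into the $Z$-norm by H\"older's inequality against the a priori bilinear estimate \eqref{eq:bil-unif} then yields the $\varpi^{-C} 2^{k \gamma(q)}$ term, with the $\eps$-loss in \eqref{eq:bil-unif} absorbed into $\varpi^{-C}$. It is precisely at this step that the loss-free Kakeya bound is indispensable: any additional logarithmic factor would be fatal after summing the telescoping series across the $k/C_0$ stages. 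For the refinement \eqref{eq:refinedapp}, I further intersect the domain with $\mathbf{\Lambda}^{\Phi_\sigma}_{\mathsf{purple}}(\zzz_0, r)$ and invoke Corollary \ref{coro:opp} to extract the geometric gain $(1 + R/(2^k r))^{-1/N}$, where the uniform transversality of $\mathbf{\Lambda}_1^{\Phi_\sigma}$ and $\mathbf{\Lambda}_2^{\Phi_\sigma}$ across $\sigma \in (0, \sigma_0]$ from Lemma \ref{lem:opposite} keeps all constants $\sigma$-independent.

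Finally, persistence \eqref{eq:persist} is an iterative application of Proposition \ref{pp:persist}: at each stage, $r$ contracts to $r(1 - C_0 R_j^{-1/(3N)})$ and the concentration bound picks up an additive $C\varpi_j + \varpi_j^{-C} R_j^{-N/2}$. The multiplicative contractions compose to at least $1 - C_0 R^{-1/(3N)}$, consistent with the definition $r^\# = r(1 - C_0 r^{-1/(3N)})$ since $r \ge C R^{1/2 + 3/N}$, while the additive errors sum geometrically to $C\varpi + \varpi^{-C} R^{-N/10}$. Uniformity in $\sigma$ throughout the argument follows because all structure constants appearing in the ingredient estimates (Lemmas \ref{lem:wp-d}, \ref{lem:opposite}, \ref{lem:conctr} and Propositions \ref{pp:C-0 quilt}, \ref{pp:persist}) have been established as $\sigma$-independent provided $\sigma_0$ is chosen sufficiently small.
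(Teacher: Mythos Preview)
Your overall architecture matches the paper's: iterate the red wave table to depth $k$ with the decreasing weights $\varpi_j = \varpi\, 2^{-(k-j)/N}$, build a depth-$C_0$ blue wave table, then telescope the no-waste bilinear $L^2$-Kakeya bound. The margin, energy, and persistence arguments are essentially as in the paper.

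There is however a genuine gap in your construction of the blue wave table. You set $\mathcal{G}^\sigma = \mathfrak{S}^{\sigma,\varpi}_{C_0}(G^\sigma, F^\sigma; Q)$, i.e.\ a \emph{single} depth-$C_0$ table built with respect to the \emph{original} red wave $F^\sigma$ on the top cube $Q$. The paper instead builds, for each $\qq \in \mathcal{Q}_k(Q)$, the blue table $\mathfrak{S}^{\sigma,\varpi}_{C_0}\bigl(G^\sigma, \mathcal{F}^{\sigma,(\qq)}; \qq\bigr)$ with respect to the red \emph{entry} $\mathcal{F}^{\sigma,(\qq)}$ on the small cube $\qq$. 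This is not cosmetic: Proposition~\ref{pp:C-0 quilt} bounds the off-diagonal product $\mathcal{F}^{\sigma,\varpi,(\varDelta)}\mathcal{G}^{\sigma,\varpi,(\varDelta')}$ only when the two tables are constructed with respect to each other on the \emph{same} cube at the \emph{same} scale. The final step of the approximation, replacing $|G^\sigma|$ by $[\mathcal{G}^\sigma]_{C_0}$ against $[\mathcal{F}^\sigma]_k$, therefore requires $\mathcal{G}^\sigma$ to be adapted to each $\mathcal{F}^{\sigma,(\qq)}$ on $\qq$ (side-length $2^{-k}R$), not to $F^\sigma$ on $Q$. With your single global construction the Kakeya estimate is simply not available at that step, and the telescoping does not close.

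A second, smaller issue: to pass from the $L^2_{t,x}$ Kakeya error to the $Z$-norm you invoke ``H\"older against the a~priori bilinear estimate \eqref{eq:bil-unif}''. That estimate is the target of the entire argument, so as stated this is circular. The paper (following \cite{TaoMZ,Y22}) instead interpolates the $L^2$ Kakeya bound against the elementary energy bound, and for the conic refinement \eqref{eq:refinedapp} against the $L^1$ bound of Corollary~\ref{coro:opp}; this produces the error $\varpi^{-C}2^{k\gamma(q)}$ and the extra factor $(1+R/(2^kr))^{-1/N}$ without appealing to the conclusion.
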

\begin{proof}
	The argument is the same to \cite{TaoMZ} and we only sketch it below.
	We first construct
	 the red wave table with respect to the blue wave by letting $\mathfrak{F}_0(F^\sigma)$ be the restriction of $F^\sigma$ on $Q$, and inductively $\mathfrak{F}_{j+C_0}:=\mathfrak{F}_{C_0}^{\sigma,\varpi 2^{-(k-j)/{N}}}\circ\mathfrak{F}_j$ for every integer $j\in[0,k]$ being a multiple of $C_0$. 
	 Reverse the colour of the waves, we construct the blue wave table for $G^\sigma$ with respect to the red wave table. This is done by letting $\mathcal{ G}^\sigma$ be the table of entries given by $\mathfrak{S}^{\sigma,\varpi}_{C_0}(G^\sigma,\mathcal{ F}^{\sigma,(\qq)};Q)$ for each $\qq\in \mathcal{ Q}_k(Q)$. Telescoping the no waste bilinear $L^2$ estimate and using the same interpolation argument with the energy estimate and also Lemma \ref{lem:opposite} as in \cite{Y22} to get the approximation as well as its  refinement via $(k,C_0)-$quilts in the $Z-$norms.
	 Finally, the persistence of the non-concentration  of energy is readily deduced from Proposition \ref{pp:persist}.

\end{proof}

\begin{remark}
	For Proposition \ref{pp:sigma-1}, where the surface is the lightcone without using the Lee-Vargas rescaling in \cite{LeeVargas},
	the above construction of wave tables is readily obtained by modifying Proposition 4.1 of \cite{TaoMZ} in the step of using interpolations. 
\end{remark}

\section{Proof of Theorem \ref{pp:sigma-0}}
\label{sec:pf-sigma-0}
For any $\sigma\in(0,\sigma_0]$ and $R\ge C_02^k$, we denote
$$
\mathfrak{R}^\sigma_R\times \mathfrak{B}^{\sigma,k}_R:=\bigcup_{\Phi_\sigma\in\mathscr{E}_\sigma}\Bigl( \mathfrak{R}^{\Phi_\sigma,1}_R\times\mathfrak{B}^{\Phi_\sigma,2^k}_R\Bigr)\,.
$$
\begin{definition}
	\label{def-A}
	Fix $\sigma\in(0,\sigma_0]$.
	For any $R\ge C_02^k$, fix $Q_R\subset\R^{n+1}$ a  spacetime cube of side-length $R$. Let $A^\sigma(R)$ be the optimal constant $C$ such that \begin{equation}
	\label{eq:bilinar-wolff}
	\| F^\sigma G^\sigma\|_{Z(Q_R)}\le C	\,\EEE(F^\sigma)^{1/2}\EEE(G^\sigma)^{1/2},
	\end{equation}
	holds for all  $(F^\sigma, G^\sigma)\in \mathfrak{R}^\sigma_R\times \mathfrak{B}^{\sigma,2^k}_R$ and all $Q_R$\,.
\end{definition}
By the invariance of translation   in the physical spacetime and  modulating the frequency variables, $A^\sigma(R)$ is independent of the center  of $Q_R$. \smallskip

 Since we have
$$
\sup_{0<\sigma\le \sigma_0} A^\sigma(C_02^k)\lesssim 2^{k\bigl(\frac{1}{q}-\frac{1}{2}\bigr)},
$$
by using the same argument in \cite{LeeVargas},
We shall only consider $R\ge C_02^k$.\medskip

To show Theorem \ref{pp:sigma-0}, we shall prove that there is a fixed constant $C_*$ depending only on $n,\eps, \sigma_0$, $\varSigma_1, \varSigma_2$ and the $(q,s)$ exponent in $Z-$norm  taken sufficiently close to the critical index $(q_c,r_c)$, such that 
$$\sup_{0<\sigma\le \sigma_0}A^\sigma(R)\le C_* 2^{k\gamma(q,\eps)}\,,$$ holds
with $\gamma(q,\eps)=\frac{1}{q}-\frac{1}{2}+\eps$ for all $R\ge C_02^k$. 
We set  $$\displaystyle\overline{A}^{\sigma}(R)=\sup_{\sigma\le\sigma'\le\sigma_0}\;\sup_{{C_0}2^k\le R'\le R}A^{\sigma'}(R'),$$  
for dealing with the margin condition.
We may assume $\overline{A}^\sigma(R)\ge 2^{k\gamma(k,\eps)}$.
\smallskip

We consider  the small scale case $R\le 2^{C_0k}$ first.
In this case,
by using the non-endpoint argument leading to (41) in \cite[P. 1307]{LeeVargas}, we have 
$$
\overline{A}^\sigma(R)\lesssim_\eps\, 2^{k\gamma(q,\eps)} \,,\quad \forall\,\eps>0.
$$ 

Thus, it suffices to consider the large scale case $R\ge 2^{C_0k}$. To this end, we need an auxiliary quantity linked to the energy concentration as in \cite{TaoMZ}.
Here, slightly different from \cite{TaoMZ} where 
the proof deals with a fixed lightcone, we are working with a family of conic surfaces.  However, we find that it suffices to study the 
auxiliary quantity for every fixed conic surface. 

\begin{definition}
	\label{def:AAA}
	For any $\sigma\in(0,\sigma_0]$, let 
	$\Phi_\sigma\in\mathscr{E}_\sigma$, $R\ge 2^k{C_0}$ and  $r,r'>0$. We define
	$\mathscr{A}^{\Phi_\sigma}(R,r,r')$ to be the optimal constant $C$ such that
	$$
	\bigl\|F^\sigma G^\sigma\bigr\|_{Z(Q_R\cap\, \mathbf{\Lambda}^{\Phi_\sigma}_{\mathsf{purple}}(\zzz_0,r'))}
	\le C \bigl(\EEE(F^\sigma)\EEE(G^\sigma)\bigr)^{\frac{1}{2q}}\; \EEE_{r,C_0Q_R}(F^\sigma,G^\sigma)^{\frac{1}{q'}},
	$$
	holds for all  $(F^\sigma,G^\sigma)\in\mathfrak{R}^{\Phi_\sigma,1}_R\times\mathfrak{B}^{\Phi_\sigma,2^k}_R$  and all  $Q_R$ and all $\zzz_0=(x_0,t_0)\in\R^{n+1}$.
\end{definition}

\subsection{Control of $\mathscr{A}^{\Phi_\sigma}$ by $\overline{A}^\sigma$ in the large scale case $R\ge 2^{C_0k }$}
\begin{proposition}
	\label{pp_2}
	There is a constant $C>0$ depending only on $n$ and $q,s$ in the $Z$-norm, but not explicitly on $C_0$, such that   for any $\sigma\in(0,\sigma_0]$ and any $\Phi_\sigma\in\mathscr{E}_\sigma$, we have 
	\begin{equation}
	\label{eq:pp-2}
	\mathscr{A}^{\Phi_\sigma}({R},r,C_0(r+1))\le (1+C2^{-C_0})\overline{A}^\sigma(R)+2^{CC_0}2^{k\gamma(q)},
	\end{equation}
	for all $R\ge2^{C_0k}$ and all $r\ge R^{\frac{1}{2}+\frac4N}$.
\end{proposition}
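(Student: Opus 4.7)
The plan is to establish \eqref{eq:pp-2} by a wave-table decomposition in the spirit of Proposition~3.6 of \cite{TaoMZ} (adapted as in \cite{Y22}); the crucial gain is the factor $(1+C2^{-C_0})$ in front of $\overline{A}^\sigma(R)$, as this is what will power the induction on scale in the next step. Normalising $\EEE(F^\sigma)=\EEE(G^\sigma)=1$ and fixing $Q_R$ and $\zzz_0$, I would apply Proposition~\ref{pp:4.1-tao} with parameter $\varpi:=2^{-C_0/2}$ over an enlarged cube $Q^*\supset Q_R$ of sidelength $\sim R$, obtaining the red wave table $\mathcal{F}^\sigma$ of depth $k$ and the blue wave table $\mathcal{G}^\sigma$ of depth $C_0$; by \eqref{eq:en-est}, \eqref{eq:m-f-wt}, \eqref{eq:refinedapp} and \eqref{eq:persist} these satisfy the required energy, margin, refined conic approximation, and persistence-of-non-concentration properties.

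Next I would split the relevant domain $Q_R\cap\mathbf{\Lambda}^{\Phi_\sigma}_{\mathsf{purple}}(\zzz_0,C_0(r+1))$ into its intersection with the quilt interior $\mathcal{X}^{\varpi,k,C_0(r+1)}_{\Phi_\sigma,\zzz_0}(Q^*)$ and the complementary $\varpi$-skin. The skin contributes only an $O(\varpi)$-fraction of the total purple volume; combining this with the trivial bound $\overline{A}^\sigma(R)$ and the fact that the purple measure $|\mathbf{\Lambda}^{\Phi_\sigma}_{\mathsf{purple}}(\zzz_0,r')\cap Q_R|\lesssim R^n r'$ is uniform in $\sigma\in(0,\sigma_0]$ by the $\mathscr{E}_\sigma$ axioms, this part is absorbed into $O(\varpi\,\overline{A}^\sigma(R))$. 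On the interior, \eqref{eq:refinedapp} then reduces the task to bounding $\|[\mathcal{F}^\sigma]_k[\mathcal{G}^\sigma]_{C_0}\|_Z$ on $\mathcal{X}^{\varpi,k,C_0(r+1)}_{\Phi_\sigma,\zzz_0}(Q^*)$, modulo the admissible error $\varpi^{-C}2^{k\gamma(q)}=2^{CC_0/2}2^{k\gamma(q)}$ which matches the second term on the right-hand side of \eqref{eq:pp-2}.

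The core estimation is the quilt product. On each fine quilt cube $\qq\in\mathcal{Q}_k(Q^*)$ sitting inside the unique $\varDelta\in\mathcal{Q}_{C_0}(Q^*)$ with $\qq\subset\varDelta$, the integrand reads $|\mathcal{F}^{\sigma,(\qq)}\mathcal{G}^{\sigma,(\varDelta)}|\indic_{\qq}$, and $\overline{A}^\sigma(r)\le\overline{A}^\sigma(R)$ applied to $(\mathcal{F}^{\sigma,(\qq)},\mathcal{G}^{\sigma,(\varDelta)})$ on subcubes of side $r$ covering $\qq\cap\mathbf{\Lambda}^{\Phi_\sigma}_{\mathsf{purple}}$ gives a local $Z$-bound by $\EEE(\mathcal{F}^{\sigma,(\qq)})^{1/2}\EEE(\mathcal{G}^{\sigma,(\varDelta)})^{1/2}$. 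I would interpolate each such factor between the global energy and the pointwise disk concentration $\EEE_{r^\#,5Q^*}(\mathcal{F}^{\sigma,(\qq)},\mathcal{G}^{\sigma,(\varDelta)})$, invoke \eqref{eq:persist} to replace the latter by $\EEE_{r,C_0Q_R}(F^\sigma,G^\sigma)+O(\varpi)$, and then sum over $(\qq,\varDelta)$ by combining the Bessel-type inequality \eqref{eq:bessel} with an $\ell^q_t\ell^s_x$ aggregation. The assembly should produce $(1+C\varpi)\overline{A}^\sigma(R)=(1+C2^{-C_0/2})\overline{A}^\sigma(R)$ with the exponent structure $(\EEE(F^\sigma)\EEE(G^\sigma))^{1/(2q)}\,\EEE_{r,C_0Q_R}^{1/q'}$ required by Definition~\ref{def:AAA}. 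The hardest step is this final bookkeeping, where one must simultaneously realise the correct exponents $1/(2q)$ and $1/q'$ while keeping Bessel's loss at $1+O(\varpi)$; uniformity in $\sigma\in(0,\sigma_0]$ is then automatic from the quantitative transversality in Lemma~\ref{lem:opposite} and the $\mathscr{E}_\sigma$-uniform wave-packet construction of Lemma~\ref{lem:wp-d}.
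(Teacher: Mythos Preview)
Your proposal has a genuine gap in the mechanism by which the energy-concentration factor $\EEE_{r,C_0Q_R}(F^\sigma,G^\sigma)^{1/q'}$ enters the estimate. Applying $\overline{A}^\sigma(r)$ on $r$-subcubes yields only full-energy factors $\EEE(\mathcal{F}^{\sigma,(\qq)})^{1/2}\EEE(\mathcal{G}^{\sigma,(\varDelta)})^{1/2}$; the subsequent ``interpolation between global energy and disk concentration'' is not an available operation, and \eqref{eq:persist} merely bounds the concentration of the table entries by that of the original waves rather than producing a concentration factor from a full-energy bound. Worse, your $\ell^q$ aggregation over the $r$-cubes tiling $\qq\cap\mathbf{\Lambda}^{\Phi_\sigma}_{\mathsf{purple}}$ would pick up the \emph{number} of such cubes, which is unbounded as $R/r\to\infty$, so the sum does not close.

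The paper's argument is not a single pass but a three-step recursion in the spirit of \cite[Proposition~3.6]{TaoMZ}. First (Proposition~\ref{pp:the noncon}), the non-concentrated base case $r\ge C_0R$: here a single disk $D=D(\zzz_Q,r/2)$ contains all of $Q_R$ in its Huygens zone, so one splits $F^\sigma=P^{\Phi_\sigma}_D F^\sigma+(1-P^{\Phi_\sigma}_D)F^\sigma$ (and likewise $G^\sigma$), discards the complementary pieces via Lemma~\ref{lem:conctr}, and bounds the main term by $\overline{A}^\sigma(2^{-C_0}R)$ applied to wave-table entries whose energies are now controlled by $\|F^\sigma\|_{L^2(D_+)}\|G^\sigma\|_{L^2(D_+)}\le\EEE_{r,C_0Q_R}(F^\sigma,G^\sigma)$ through \eqref{2.18}. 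Second (Proposition~\ref{pp:the-con}), for $R^{1/2+3/N}\le r\le C_0R$ one obtains the recursive inequality
\[
\mathscr{A}^{\Phi_\sigma}(R,r,r')\le(1+C\varpi)\,\mathscr{A}^{\Phi_\sigma}(R/C_0,r^\#,r')+\varpi^{-C}\bigl(1+R/(2^kr')\bigr)^{-\theta}2^{k\gamma(q)},
\]
by applying the \emph{definition} of $\mathscr{A}^{\Phi_\sigma}$ (not $\overline{A}^\sigma$) on each $\varDelta\in\mathcal{Q}_{C_0}(Q^*)$ and invoking \eqref{eq:persist}. Third, one iterates this recursion $O(\log_{C_0}(R/r))$ times until the base case applies, summing the geometric error series; the specific choice $r'=C_0(r+1)$ makes the error term telescoped summable. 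Your one-shot scheme collapses precisely the distinction between these two regimes, and that distinction is where the concentration factor actually enters.
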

The proof is achieved  in three steps.

\subsubsection{Step 1. The non-concentrated case $r \ge C_0R$}

Recall an orthogonality lemma:
\begin{lemma}
	\label{lem:Te}
	Let $F_1,F_2,\ldots,F_J$ be a finite number of functions on $\R^{n+1}$ such that  $\{F_j\}_j\subset L^q_tL^s_x(\R^{n+1})$ and that the supports of these functions are mutually disjoint. Then, we have
	$$
	\Bigl\|\sum_{j=1}^J F_j\Bigr\|_{L^q_tL^s_x}^q\le \sum_{j=1}^J\|F_j\|_{L^q_tL^s_x}^q\;,
	$$
	for all $(q,s)\in\mathbf{\Gamma}$ close to the critical index $(q_c,r_c)$ .
\end{lemma}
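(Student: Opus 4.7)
The plan is to reduce the mixed-norm inequality to two elementary facts: the exact disjoint-support identity in the inner $L^s_x$ norm, and the subadditivity of $y\mapsto y^{q/s}$ available in the regime $q/s\le 1$.

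First, for each fixed $t\in\R$, the hypothesis that the functions $F_j(\cdot,t)$ have mutually disjoint spatial supports gives the pointwise identity
\begin{equation*}
\Bigl\|\sum_{j=1}^J F_j(\cdot,t)\Bigr\|_{L^s_x}^s=\sum_{j=1}^J\|F_j(\cdot,t)\|_{L^s_x}^s,
\end{equation*}
since on each support only a single summand contributes. Raising to the power $q/s$ and integrating in $t$ converts the left-hand side of the target inequality into
\begin{equation*}
\Bigl\|\sum_{j=1}^J F_j\Bigr\|_{L^q_tL^s_x}^q=\int_\R\Bigl(\sum_{j=1}^J\|F_j(\cdot,t)\|_{L^s_x}^s\Bigr)^{q/s}\,dt.
\end{equation*}

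Next, I would verify the range of exponents. With $(q_c,r_c)=\bigl(\max(1,4/(n+1)),\min(2,(n+1)/(n-1))\bigr)$, a quick inspection of the cases $n=2$ (where $(q_c,r_c)=(4/3,2)$) and $n\ge 3$ (where $q_c=1$ while $r_c=(n+1)/(n-1)>1$) shows that $q_c<r_c$ strictly in every dimension $n\ge 2$. Hence, for $(q,s)=(q_c^+,r_c^-)$ taken sufficiently close to the critical pair one still has $q<s$, and the elementary subadditivity $(\sum_j a_j)^{q/s}\le\sum_j a_j^{q/s}$ valid for $q/s\le 1$ can be applied pointwise in $t$ with $a_j(t)=\|F_j(\cdot,t)\|_{L^s_x}^s$. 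Integrating in $t$ then yields the claimed inequality.

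There is really no serious obstacle to surmount: the lemma is nothing more than the standard orthogonality in mixed Lebesgue spaces, specialized to the disjoint-support setting and to the regime $q\le s$ that happens to contain $(q_c,r_c)$ together with a small neighbourhood of admissible exponents on the endline $\frac{1}{q}=\frac{n+1}{2}(1-\frac{1}{s})$. The only content of the argument is the dimensional check in the previous paragraph, which guarantees robustness under the $(q_c^+,r_c^-)$ perturbation used in the definition of the $Z$-norm.
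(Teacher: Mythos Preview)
Your proof is correct and complete. The paper itself does not give an argument for this lemma but simply cites Lemma~5.3 of Temur \cite{Temur}; your argument---disjoint-support orthogonality in the inner $L^s_x$ norm followed by the elementary subadditivity $(\sum a_j)^{q/s}\le\sum a_j^{q/s}$ for $q\le s$, together with the dimensional check that $q_c<r_c$ in all dimensions $n\ge 2$---is exactly the standard proof one finds there.
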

Please see Lemma 5.3 of \cite{Temur} for the proof.

\begin{proposition}
	\label{pp:the noncon}
	There is a constant $C>0$ such that for any $R\ge2^{C_0k}$, we have for all $r \ge C_0R$ and $r'>0$
	$$\mathscr{A}^{\Phi_\sigma}(R,r, r')\le (1+C \varpi)
	\overline{A}^{\sigma}(R)+\varpi^{-C}2^{k\gamma(q)},
	$$
	for all $0<\varpi\le 2^{-C_0}$ and all $\Phi_\sigma\in\mathscr{E}_\sigma$ with $\sigma\in(0,\sigma_0]$.
\end{proposition}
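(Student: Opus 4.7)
The plan is to reduce this non-concentrated case directly to Definition \ref{def-A} of $A^\sigma(R)$, exploiting that the hypothesis $r\ge C_0 R$ forces $\EEE_{r,C_0 Q_R}(F^\sigma,G^\sigma)$ to be a definite fraction of $\EEE(F^\sigma)^{1/2}\EEE(G^\sigma)^{1/2}$; a wave-packet refinement then upgrades the resulting fixed multiplicative constant into a factor $(1+C\varpi)$, at the cost of a negligible error.

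First, using the trivial inclusion $Q_R\cap \mathbf{\Lambda}^{\Phi_\sigma}_{\mathsf{purple}}(\zzz_0,r')\subset Q_R$, the definition of $A^\sigma(R)$, and the built-in lower bound
\[
\EEE_{r,C_0 Q_R}(F^\sigma,G^\sigma)\ge \tfrac{1}{2}\EEE(F^\sigma)^{1/2}\EEE(G^\sigma)^{1/2}
\]
from Definition \ref{def:energy-con}, together with the identity $\tfrac{1}{2q}+\tfrac{1}{2q'}=\tfrac{1}{2}$, one immediately obtains the crude bound $\mathscr{A}^{\Phi_\sigma}(R,r,r')\le 2^{1/q'}\overline{A}^\sigma(R)$. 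This already has the correct structure but with an absolute multiplicative constant in place of $(1+C\varpi)$.

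To sharpen the constant, I would apply the wave-packet decomposition from Lemma \ref{lem:wp-d} with parameter $\varpi$ and scale $\varrho=R^{1/2}$ on a spacetime cube slightly larger than $Q_R$, obtaining $F^\sigma=\sum_T F^\sigma_T$ and analogously for $G^\sigma$. Separating each sum into a \emph{near} part (tubes whose geometric trace meets a fixed dilate of $Q_R$) and a \emph{far} part, the rapid decay estimate \eqref{eq:rapdcay} together with the hypothesis $R\ge 2^{C_0 k}$ guarantees that the far part contributes at most $\varpi^{-C}2^{k\gamma(q)}\EEE(F^\sigma)^{1/2}\EEE(G^\sigma)^{1/2}$ to $\|F^\sigma G^\sigma\|_{Z(Q_R)}$, which after conversion via the lower bound on $\EEE_{r,C_0 Q_R}$ becomes exactly the claimed error term. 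For the near part, I would invoke Definition \ref{def-A} applied to the aggregated near waves $F^{\sigma,\mathrm{near}}$ and $G^{\sigma,\mathrm{near}}$; the Bessel-type inequality \eqref{eq:bessel} then yields $\EEE(F^{\sigma,\mathrm{near}})^{1/2}\le (1+C_n\varpi)\EEE(F^\sigma)^{1/2}$, and similarly for $G^\sigma$, producing the desired prefactor $(1+C\varpi)$.

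The principal obstacle will be the non-subadditivity of the $Z$-norm when $(q,s)$ is close to the critical index $(q_c,r_c)$, which makes the triangle-type splitting between the near and far pieces a priori lossy. I would address this by appealing to Lemma \ref{lem:Te} on the almost-disjoint spatial cells provided by the tube partition (so that the quasi-$\ell^q$ nesting propagates through the argument), with any residual constants coming from the bounded overlap of neighbouring tube cells absorbed harmlessly into $\varpi^{-C}2^{k\gamma(q)}$. The condition $r\ge C_0 R$ is used only in the first step, where it ensures the conversion between $\EEE(F^\sigma)^{1/2}\EEE(G^\sigma)^{1/2}$ and $(\EEE(F^\sigma)\EEE(G^\sigma))^{1/(2q)}\EEE_{r,C_0 Q_R}^{1/q'}$ costs only a constant independent of $r$ and $r'$, making the subsequent wave-packet refinement effective.
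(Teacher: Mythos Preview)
Your approach has a genuine gap: the near/far tube splitting together with Bessel yields at best $\EEE(F^{\sigma,\mathrm{near}})^{1/2}\le(1+C_n\varpi)\,\EEE(F^\sigma)^{1/2}$, so after applying Definition~\ref{def-A} you obtain $\|F^\sigma G^\sigma\|_{Z(Q_R)}\le(1+C\varpi)\,\overline{A}^\sigma(R)\,\EEE(F^\sigma)^{1/2}\EEE(G^\sigma)^{1/2}$ plus error. Converting back via $\EEE(F^\sigma)^{1/2}\EEE(G^\sigma)^{1/2}\le 2^{1/q'}(\EEE(F^\sigma)\EEE(G^\sigma))^{1/(2q)}\EEE_{r,C_0Q_R}^{1/q'}$ still carries the fixed factor $2^{1/q'}$, so you end with $\mathscr{A}^{\Phi_\sigma}(R,r,r')\le 2^{1/q'}(1+C\varpi)\,\overline{A}^\sigma(R)+\text{error}$, no better than the crude bound. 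The Bessel inequality controls total energy after a partition; it does nothing about the ratio between $\EEE(F^\sigma)^{1/2}\EEE(G^\sigma)^{1/2}$ and $\EEE_{r,C_0Q_R}$, which is where the $2^{1/q'}$ lives. Since the later bootstrap in Proposition~\ref{pp_2} needs a prefactor strictly below $1$ after multiplying by $(1-\delta)^{1/q'}$, retaining $2^{1/q'}$ is fatal.

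The paper's mechanism is different and uses the hypothesis $r\ge C_0R$ in an essential second way. Because $Q_R$ sits deep inside the disk $D=D(\zzz_Q,r/2)$, Lemma~\ref{lem:conctr} makes $(1-P_D^{\Phi_\sigma})F^\sigma$ and $(1-P_D^{\Phi_\sigma})G^\sigma$ negligible on $Q_R$, reducing matters to $(P_D^{\Phi_\sigma}F^\sigma)(P_D^{\Phi_\sigma}G^\sigma)$. One then builds wave tables $\mathcal{F}_D^\sigma,\mathcal{G}_D^\sigma$ for the \emph{localised} waves, passes to quilts via Proposition~\ref{pp:4.1-tao}, and $\ell^q$-sums over $\varDelta\in\mathcal{Q}_{C_0}(Q^*)$ using Lemma~\ref{lem:Te}. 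The decisive step is the interpolation
\[
\sum_{\varDelta}\bigl(\EEE(\mathcal{F}_D^{\sigma,\varDelta})\EEE(\mathcal{G}_D^{\sigma,\varDelta})\bigr)^{q/2}
\le \Bigl(\sup_{\varDelta}\EEE(\mathcal{F}_D^{\sigma,\varDelta})\EEE(\mathcal{G}_D^{\sigma,\varDelta})\Bigr)^{(q-1)/2}
\sum_{\varDelta}\bigl(\EEE(\mathcal{F}_D^{\sigma,\varDelta})\EEE(\mathcal{G}_D^{\sigma,\varDelta})\bigr)^{1/2}.
\]
The supremum is bounded by $\EEE(P_D^{\Phi_\sigma}F^\sigma)\EEE(P_D^{\Phi_\sigma}G^\sigma)\le \|F^\sigma\|_{L^2(D_+)}^2\|G^\sigma\|_{L^2(D_+)}^2\le \EEE_{r,C_0Q_R}^2$ via \eqref{2.18}, and this is precisely where the exponent $1/q'$ on $\EEE_{r,C_0Q_R}$ is generated; the $\ell^1$-sum is controlled by Cauchy--Schwarz and Lemma~\ref{lem:bessel-w-t}, producing the factor $(1+C\varpi)$. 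Without the $P_D^{\Phi_\sigma}$ localisation linking the sup-bound to a disk of radius $\sim r$, and without the $\ell^q$--$\ell^\infty$ interpolation across the wave-table partition, the constant $2^{1/q'}$ cannot be removed.
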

\begin{proof}
	For any $\sigma\in(0,\sigma_0]$ and $\Phi_\sigma\in\mathscr{E}_\sigma$.
	Let $F^\sigma\in\mathfrak{R}^{\Phi_\sigma,1}_R,G^\sigma\in\mathfrak{B}^{\Phi_\sigma,2^k}_R$ be red and blue waves  with normalized energy.
	For any $Q=Q_R$, let $\zzz_Q=(x_Q,t_Q)$ be the center of $Q$. Let $D=D(\zzz_Q,r/2)$ and write
	$$
	F^\sigma=P_D^{\Phi_\sigma} F^\sigma+(1-P^{\Phi_\sigma}_D)F^\sigma,\;
	G^\sigma=P_D^{\Phi_\sigma} G^\sigma+(1-P_D^{\Phi_\sigma})G^\sigma.$$
	Using Lemma \ref{lem:conctr}, we have
	$$
	\max\Bigl\{
	\|\bigl((1-P_D^{\Phi_\sigma})F^\sigma\bigr)\; G^\sigma\|_{Z(Q_R)},\;\|(P_D^{\Phi_\sigma} F^\sigma)(1-P_D^{\Phi_\sigma})G^\sigma\|_{Z(Q_R)}\Bigr\}\le \varpi^{-C}2^{k\gamma(q)}.
	$$
	We are reduced to
	\begin{equation}
	\label{UHU}
		\|(P^{\Phi_\sigma}_D F^\sigma )(P_D^{\Phi_\sigma} G^\sigma)\|_{Z(Q)}\le (1+C\varpi)\overline{A}^{\sigma}(R)\, \EEE_{r,C_0Q}(F^\sigma,G^\sigma)^{1/q'}+\varpi^{-C}2^{k\gamma(q)}.
	\end{equation}
	To see this is the case, let $\mathcal{F}^\sigma_D$ and $\mathcal{G}^\sigma_D$ be the wave tables  for the red and blue waves $P_D^{\Phi_\sigma}F^\sigma$ and $P_D^{\Phi_\sigma} G^\sigma$ on an appropriately enlarged\footnote{The enlargement is for the use of the averaging Lemma 4.2 in \cite{LeeVargas}, extension of  Lemma 6.1 in \cite{TaoMZ} to the mixed-norms.} cube $Q^*$ containing $Q$. Applying Proposition \ref{pp:C-0 quilt}, we have
	$$
	\|(P_D^{\Phi_\sigma} F^\sigma) (P_D^{\Phi_\sigma} G^\sigma)\|_{Z(Q_R)}\le (1+C\varpi)
	\bigl\|[\mathcal{F}_D^\sigma]_{k}[\mathcal{G}_D^\sigma]_{C_0}\bigr\|_{Z(X^{\varpi,k}(Q^*))}+\varpi^{-C}2^{k\gamma(q)}.
	$$
	Applying Lemma \ref{lem:Te} and the definition of $\overline{A}^\sigma(R)$, we get
	\begin{multline}
	\bigl\|[\mathcal{F}_D^\sigma]_{k}[\mathcal{G}_D^\sigma]_{C_0}\bigr\|_{Z(X^{\varpi,k}(Q^*))}\le\Bigl(\sum_{\varDelta\in \mathcal{Q}_{C_0}(Q^*)}
	\bigl\|\mathcal{F}_{D}^{\sigma,\varDelta}\,\mathcal{G}_{D}^{\sigma,\varDelta}\bigr\|^q_{Z(\varDelta)}\Bigr)^{1/q}\\
	\le \overline{A}^\sigma(2^{-C_0}R)\Bigl(\sum_{\triangle\in\mathcal{Q}_{C_0}(Q^*)}
	\EEE(\mathcal{F}_{D}^{\sigma,\varDelta})^{q/2} \EEE(\mathcal{G}_{D}^{\sigma,\varDelta})^{q/2}\Bigr)^{1/q},
	\end{multline}
	where we have used $\mathcal{F}^{\sigma,\varDelta}_D\in\mathfrak{R}^{\sigma}_{2^{-C_0}R},\, \mathcal{G}^{\sigma,\varDelta}_D\in\mathfrak{B}^{\sigma,k}_{2^{-C_0}R}.$
	Using Cauchy-Schwarz,
	$\EEE(\mathcal{F}_{D}^\sigma), \EEE(\mathcal{G}_D^\sigma)\le 1+C\varpi$ by  Lemma \ref{lem:bessel-w-t}, and \eqref{2.18} of Lemma \ref{lem:Tao10.2},
	we get \eqref{UHU}.
\end{proof}
\subsubsection{Step 2.  The energy concentrated case $R^{\frac{1}{2}+\frac{3}{N}}\le r\le C_0 R$}
\begin{proposition}
	\label{pp:the-con}
	There is $C>0$ and $\theta>0$ such that for any $R\ge 2^{C_0k}$, we have for all $r,r'>0$ with $R^{\frac{1}{2}+\frac{3}{N}}\le r\le C_0 R$.
	$$
	\mathscr{A}^{\Phi_\sigma}(R,r,r')\le (1+C\varpi) \mathscr{A}^{\Phi_\sigma}(R/C_0,r^\#,r')+\varpi^{-C}\Bigl(1+\frac{R}{2^kr'}\Bigr)^{-\theta}2^{k\gamma(q)}
	$$
	with $r^\#=r(1-C_0r^{-\frac{1}{3N}})$
	holds	for all $0<\varpi\le 2^{-C_0}$ and $\Phi_\sigma\in\mathscr{E}_\sigma$ with $\sigma\in (0,\sigma_0]$.
\end{proposition}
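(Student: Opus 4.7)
The plan is to execute one step of induction-on-scale, passing from scale $R$ to scale $R/C_0$ while the non-concentration parameter degrades slightly from $r$ to $r^\#$. Given a red wave $F^\sigma\in\mathfrak{R}^{\Phi_\sigma,1}_R$ and a blue wave $G^\sigma\in\mathfrak{B}^{\Phi_\sigma,2^k}_R$ of unit energy, a spacetime cube $Q=Q_R$, and a point $\zzz_0\in\R^{n+1}$, first enlarge $Q$ slightly to $Q^*=CQ$ and apply Proposition \ref{pp:4.1-tao} with parameter $\varpi$ to produce a red wave table $\mathcal{F}^\sigma$ of depth $k$ and a blue wave table $\mathcal{G}^\sigma$ of depth $C_0$ on $Q^*$, enjoying the margin estimate \eqref{eq:m-f-wt}, the Bessel-type energy bound \eqref{eq:en-est}, the persistence of non-concentration \eqref{eq:persist}, and, crucially, the \emph{refined} approximation \eqref{eq:refinedapp}, which gives
\[
\|F^\sigma G^\sigma\|_{Z(\mathcal{X}^{\varpi,k,r'}_{\Phi_\sigma,\zzz_0}(Q^*))}
\le \bigl\|[\mathcal{F}^\sigma]_k[\mathcal{G}^\sigma]_{C_0}\bigr\|_{Z(\mathcal{X}^{\varpi,k,r'}_{\Phi_\sigma,\zzz_0}(Q^*))}
+\varpi^{-C}\bigl(1+R/(2^k r')\bigr)^{-1/N}2^{k\gamma(q)}.
\]
This is the source of the target $(1+R/(2^kr'))^{-\theta}$ factor in the error term, and it is the only place where the non-concentrated argument for Proposition \ref{pp:the noncon} must be upgraded from \eqref{eq:appkC_0} to \eqref{eq:refinedapp}.

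Next, decompose the quilt product over the sub-cubes $\varDelta\in\mathcal{Q}_{C_0}(Q^*)$. By the monotonicity of quilts \eqref{eq:tao18}, the relative quilt of $\mathcal{F}^{\sigma,(\varDelta)}$ on $\varDelta$ at depth $k-C_0$ is pointwise dominated by $|\mathcal{F}^{\sigma,(\varDelta)}|$, so
\[
\indic_\varDelta\,[\mathcal{F}^\sigma]_k\,[\mathcal{G}^\sigma]_{C_0}\le \bigl|\mathcal{F}^{\sigma,(\varDelta)}\mathcal{G}^{\sigma,(\varDelta)}\bigr|.
\]
Since the margin estimate \eqref{eq:m-f-wt} places $\mathcal{F}^{\sigma,(\varDelta)}$ in $\mathfrak{R}^{\Phi_\sigma,1}_{R/C_0}$ and $\mathcal{G}^{\sigma,(\varDelta)}$ in $\mathfrak{B}^{\Phi_\sigma,2^k}_{R/C_0}$, the definition of $\mathscr{A}^{\Phi_\sigma}(R/C_0,r^\#,r')$ applies on each $\varDelta$; coupled with the persistence of non-concentration \eqref{eq:persist}, which uniformly in $\varDelta$ dominates $\mathcal{E}_{r^\#,C_0\varDelta}(\mathcal{F}^{\sigma,(\varDelta)},\mathcal{G}^{\sigma,(\varDelta)})$ by $\mathcal{E}_{r,5Q}(F^\sigma,G^\sigma)+C\varpi+\varpi^{-C}R^{-N/10}$, this yields the required per-cube $L^q$-weighted estimate.

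Aggregation then proceeds via the orthogonality Lemma \ref{lem:Te}, which is applicable since the indicators $\{\indic_\varDelta\}$ have pairwise disjoint supports, together with Cauchy--Schwarz in the energy factors and the Bessel-type bound $\sum_\varDelta \mathcal{E}(\mathcal{F}^{\sigma,(\varDelta)})\le\mathcal{E}(\mathcal{F}^\sigma)\le 1+C\varpi$ from \eqref{eq:en-est}. The combination produces the main inequality with leading coefficient $(1+C\varpi)\mathscr{A}^{\Phi_\sigma}(R/C_0,r^\#,r')$ on the interior $\mathcal{X}^{\varpi,k,r'}(Q^*)$. It remains to treat the boundary complement $(Q\cap\mathbf{\Lambda}^{\Phi_\sigma}_{\mathsf{purple}}(\zzz_0,r'))\setminus\mathcal{X}^{\varpi,k,r'}(Q^*)$, a tubular strip of measure $O(\varpi\sqrt R)$ inside the conic neighbourhood, whose contribution is absorbed into the error by interpolating the bilinear $L^2$ energy estimate of Corollary \ref{coro:opp} against the trivial frequency-localized $L^\infty$ bound.

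The main obstacle is ensuring that a strictly positive $\theta$ in $(1+R/(2^kr'))^{-\theta}$ persists on this entire complement: the interior receives the optimal exponent $1/N$ directly from \eqref{eq:refinedapp}, but the boundary tubular strips only yield a weaker exponent after the H\"older interpolation above, so one must settle for some $\theta<1/N$. Apart from this, the proof is uniform in $\sigma\in(0,\sigma_0]$ precisely because every ingredient---the wave table construction of Lemma \ref{lem:wp-d}, the conic energy estimate of Lemma \ref{lem:opposite}, the pigeonhole-free bilinear $L^2$-Kakeya of Proposition \ref{pp:C-0 quilt}, the persistence of Proposition \ref{pp:persist}, and the refined quilt approximation \eqref{eq:refinedapp}---is itself uniform on the family $\mathscr{E}_\sigma$ once $\sigma_0$ is fixed sufficiently small.
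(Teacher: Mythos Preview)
Your proposal follows essentially the same route as the paper: apply Proposition~\ref{pp:4.1-tao} on an enlarged cube to obtain wave tables satisfying \eqref{eq:m-f-wt}--\eqref{eq:persist}, invoke the refined quilt approximation \eqref{eq:refinedapp} on the conic set, decompose over $\varDelta\in\mathcal Q_{C_0}(Q^*)$ using \eqref{eq:tao18}, apply the definition of $\mathscr{A}^{\Phi_\sigma}(R/C_0,r^\#,r')$ on each sub-cube together with the persistence estimate, and aggregate via Lemma~\ref{lem:Te} and Cauchy--Schwarz on the energy factors. All of this matches the paper's argument.

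The one substantive difference is how you pass from the full region $Q_R\cap\mathbf{\Lambda}^{\Phi_\sigma}_{\mathsf{purple}}(\zzz_0,r')$ required in Definition~\ref{def:AAA} to the interior $\mathcal{X}^{\varpi,k,r'}_{\Phi_\sigma,\zzz_0}(Q^*)$ where \eqref{eq:refinedapp} lives. You propose estimating the boundary-strip contribution directly by interpolating Corollary~\ref{coro:opp} against an $L^\infty$ bound, and you correctly identify that this may degrade the exponent $\theta$ below $1/N$. The paper instead uses the averaging lemma (Lemma~6.1 of \cite{TaoMZ}, extended to mixed norms as Lemma~4.2 of \cite{LeeVargas}; it is invoked in the footnote to the proof of Proposition~\ref{pp:the noncon}). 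One averages over translates $Q^*_\tau$ of $Q^*$: since every point $z\in Q\cap\mathbf{\Lambda}^{\Phi_\sigma}_{\mathsf{purple}}(\zzz_0,r')$ lies in $X^{\varpi,k}(Q^*_\tau)$ for a $(1-O(\varpi))$-fraction of the translates, and the conic set $\mathbf{\Lambda}^{\Phi_\sigma}_{\mathsf{purple}}(\zzz_0,r')$ is fixed under the averaging, one obtains directly
\[
\|F^\sigma G^\sigma\|_{Z(Q\cap\mathbf{\Lambda}^{\Phi_\sigma}_{\mathsf{purple}}(\zzz_0,r'))}\le (1+C\varpi)\sup_\tau\|F^\sigma G^\sigma\|_{Z(\mathcal{X}^{\varpi,k,r'}_{\Phi_\sigma,\zzz_0}(Q^*_\tau))},
\]
which cleanly transfers the full decay exponent $\theta=1/N$ from \eqref{eq:refinedapp} and avoids any separate boundary analysis. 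Your direct approach is workable but unnecessarily laborious; the averaging lemma is the standard device here.
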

\begin{proof}
	Let $F^\sigma\in\mathfrak{R}^{\Phi_\sigma,1}_R,
	G^\sigma\in \mathfrak{B}^{\Phi_\sigma,2^k}_R$ be red and blue $\Phi_\sigma-$waves  with normalized energyb $\EEE(F^\sigma)=\EEE(G^\sigma)=1$. For any $Q=Q_R$, by using Proposition \ref{pp:C-0 quilt}, we have for all $\zzz_0$
	\begin{multline*}
	\bigl\|F^\sigma G^\sigma\bigr\|_{Z\bigl(\mathcal{X}^{\varpi,k,r}_{\Phi_\sigma,\zzz_0}(Q)\bigr)}\\
	\le (1+C\varpi)\Bigl\|[\mathcal{F}^{\sigma}]_{k}[\mathcal{G}^{\sigma}]_{C_0}\Bigr\|_{Z(\mathcal{ X}^{\varpi,k,r}_{\Phi_\sigma,\zzz_0}(Q^*)}
	+\varpi^{-C}\Bigl(1+\frac{R}{2^kr'}\Bigr)^{-\frac{1}{N}}2^{k\gamma(q)}.
	\end{multline*}
	We are reduced to showing
	\begin{multline}
	\label{OIU}
	\Bigl\|[\mathcal{F}^{\sigma}]_{k}[\mathcal{G}^{\sigma}]_{C_0}\Bigr\|_{Z(\mathcal{ X}^{\varpi,k,r}_{\Phi_\sigma,\zzz_0}(Q^*))}\\ \le (1+C\varpi) \mathscr{A}^{\Phi_\sigma}(R/C_0,r^\#,r')\;
	\EEE_{r,C_0Q}(F^\sigma,G^\sigma)^{1/q'}+\varpi^{-C} R^{-N/2} 2^{k\gamma(q)}.
	\end{multline}
	Using the definition of $\mathscr{A}^{\Phi_\sigma}(R,r,r')$, we have for all $\varDelta$
	\begin{multline*}
	\bigl\|\mathcal{F}^{\sigma,\varDelta}\mathcal{G}^{\sigma,\varDelta}\bigr\|_{Z(\mathcal{ X}^{\varpi,k,r}_{\Phi_\sigma,\zzz_0}(\varDelta))}\\
	\le \mathscr{A}^{\Phi_\sigma}(R/C_0, r^\#,r')\,\EEE_{r^\#,C_0\varDelta}(\mathcal{F}^{\sigma,\varDelta},\mathcal{G}^{\sigma,\varDelta})^{1/q'}(\EEE(\mathcal{F}^{\sigma,\varDelta})\EEE(\mathcal{G}^{\sigma,\varDelta}))^{1/(2q)}.
	\end{multline*}
	By  Lemma \ref{lem:Te}, Proposition \ref{pp:persist} with $2^{-C_0}CR\ll C_0^{-1}R$ so that $C_0\varDelta\subset 5Q$, we obtain by  Cauchy-Schwarz's inequality
	\begin{multline}
	\label{OIKU}
	\Bigl\|[\mathcal{F}^{\sigma}]_{k}[\mathcal{G}^{\sigma}]_{C_0}\Bigr\|^q_{Z(\mathcal{X}^{\varpi,k,r}_{\Phi_\sigma,\zzz_0}(Q^*))}
	\le
		\sum_{\varDelta\in\mathcal{Q}_{C_0}(Q^*)}
	\bigl\|\mathcal{F}^{\sigma,\varDelta}\mathcal{G}^{\sigma,\varDelta}\bigr\|^q_{Z(\mathcal{X}_{\Phi_\sigma,\zzz_0}^{\varpi,k,r}(Q^*))}\\ \le (1+C\varpi) \mathscr{A}^{\Phi_\sigma}(R/C_0,r^\#,r')^q\,\,
	\EEE_{r,C_0Q}(F^\sigma,G^\sigma)^{q/q'}+\varpi^{-C} R^{-qN/2}2^{k\gamma(q)},
	\end{multline}
	and \eqref{OIU} follows by adjusting the constant $C$.
\end{proof}
\subsubsection{Step 3. Proof of Proposition \ref{pp_2}}
With Proposition \ref{pp:the noncon} and \ref{pp:the-con}, we may complete the proof of  Proposition \ref{pp_2} by using the standard iteration argument. Please see \cite[P.225]{TaoMZ}
for details.
\qed

\subsection{Essential concentration along conic regions}\label{sect:pf-thm}
Fix a pair of red and blue waves $(\mathscr{F}^\sigma,\mathscr{G}^\sigma)\in\mathfrak{R}^\sigma_R\times\mathfrak{B}^{\sigma,2^k}_R$ with $\EEE(\mathscr{F}^\sigma)=\EEE(\mathscr{G}^\sigma)=1$. 
The complete proof relies crucially on  the Kakeya compression property below.

Before stating this result, let us start with a non-endpoint bilinear estimate.
For $0<r_1<r_2<+\infty$,  we define the cubical annulus as
$$
Q^{\mathsf{ann}}(x_Q,t_Q;r_1,r_2)=Q(x_Q,t_Q;r_2)\setminus Q(x_Q,t_Q;r_1).
$$
The following  non-endpoint bilinear estimate holds for localized blue or red waves, on a dyadic annulus corresponds to Lemma 11.1 of Tao \cite{TaoMZ}, and we omit the proof since it is rather standard using the wave-packets and a multiplicity estimate on the overlappedness of tubes. See also \cite{Y22} for more details.
\begin{lemma}
	\label{lem:non-edpt}
	Let $R\ge  2^{C_0k}$,
	$C_02^k\le r\le R^{\frac{1}{2}+\frac{4}{N}}$, and
	$D=D(\zzz_D,\,C_0^{1/2}r)$ with $\zzz_D=(x_D,t_D)$. Then, there exists $b>0$, depending only on $\sigma_0,\,Z$ and $n$, such that for any  red and blue $\Phi_\sigma-$waves $F^\sigma,G^\sigma$ of frequency $1$ and $2^k$ respectively, with  $\EEE(F^\sigma)=\EEE(G^\sigma)=1$, we have
	\begin{equation}
	\|(P_D^{\Phi_\sigma} F^\sigma)G^\sigma\|_{Z(Q^\mathsf{ann}(\zzz_D; R,2R))},	\| F^\sigma\, (P_D^{\Phi_\sigma}G^\sigma)\|_{Z(Q^\mathsf{ann}(\zzz_D; R,2R))}\lesssim R^{-b},
	\end{equation}
	where the implicit constants are uniform with respect to $\sigma\in (0,\sigma_0]$
\end{lemma}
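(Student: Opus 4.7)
The proof follows the wave-packet-plus-multiplicity template used in Tao's Lemma 11.1 for the lightcone, carried out uniformly in $\sigma\in(0,\sigma_0]$. The key geometric input is that $P_D^{\Phi_\sigma}F^\sigma$ localizes at later times to an $O(r)$-thick shell around the red cone emanating from $D$, and on $Q^{\mathsf{ann}}(\zzz_D;R,2R)$ this shell is thin transverse to its null directions. Since $(q,s)=(q_c^+,r_c^-)$ is strictly off the critical exponent, this thinness can be traded for a positive power of $R^{-1}$.

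First, by estimate \eqref{2.21} of Lemma \ref{lem:conctr}, I would replace $P_D^{\Phi_\sigma}F^\sigma$ with its restriction to $\mathcal{U}:=Q^{\mathsf{ann}}(\zzz_D;R,2R)\cap \mathbf{ \Lambda}_1^{\Phi_\sigma}(\zzz_D,C_1 r)$ at cost $O(R^{-M})$, for any $M$. Next I would apply the wave-packet decomposition of Lemma \ref{lem:wp-d} at tube scale $\varrho=R^{1/2}$ to both waves, first normalizing $G^\sigma$ to frequency one via $\mathbb{D}_{2^k}$. The concentration of $P_D^{\Phi_\sigma}F^\sigma$ inside $D$ at time $t_D$ (inequality \eqref{2.18}), coupled with the wave-packet decay \eqref{eq:rapdcay}, confines the effective red sum to tubes whose cores come within $O(\varrho)$ of $D$; all such tubes fan out from the common region $D$, and hence at each spacetime point in $\mathcal{U}$ at most $O(1)$ of them overlap. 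Pairing this bounded tube multiplicity with the uniform angular transversality of the red and blue characteristic directions from Proposition \ref{pp:KKK} yields, as a minor variant of the no-waste bilinear $L^2$-Kakeya estimate of Proposition \ref{pp:C-0 quilt} (and using Lemma \ref{lem:opposite} applied to $G^\sigma$ on the red conic region), a bilinear $L^2_{t,x}$ bound on $\mathcal{U}$ with a polynomial gain in $(r/R)^{1/2}$, up to $\varpi^{-O(1)}$ losses.

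Finally, I would interpolate this $L^2_{t,x}$ bound with the cheap $L^\infty_tL^2_x$-Bernstein bound, together with Hölder in space against the thin slice $\mathcal{U}(t)$ of spatial measure $O(rR^{n-1})$. Since $(q,s)$ lies strictly off the critical exponent on the endline, the resulting $L^q_tL^s_x(\mathcal{U})$-bound takes the form $r^{\alpha}R^{-\beta}$, with $\beta>0$ produced by the off-endpoint gap and $\alpha>0$ strictly smaller than $1/2+4/N$ for $N$ large; combined with the hypothesis $r\le R^{1/2+4/N}$, this gives $R^{-b}$ with $b=b(q,s,n,\sigma_0)>0$. The companion estimate for $F^\sigma(P_D^{\Phi_\sigma}G^\sigma)$ is identical after swapping the roles of the two colours. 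The main potential obstacle is securing uniformity across $\Phi_\sigma\in\mathscr{E}_\sigma$ and $\sigma\in(0,\sigma_0]$, but this is already arranged by the uniform formulations of Proposition \ref{pp:KKK}, Lemma \ref{lem:opposite}, Lemma \ref{lem:conctr} and Lemma \ref{lem:wp-d}; once those uniformities are in place, the adaptation of Tao's argument is routine.
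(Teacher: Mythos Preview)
Your proposal is correct and follows essentially the same approach the paper indicates: the paper omits the proof, noting that it ``corresponds to Lemma~11.1 of Tao~\cite{TaoMZ}'' and is ``rather standard using the wave-packets and a multiplicity estimate on the overlappedness of tubes,'' which is exactly the template you describe. One minor point: the claim that the red tubes through $D$ have multiplicity $O(1)$ on $\mathcal{U}$ is only literally true when $r\lesssim R^{1/2}$; for $r$ up to $R^{1/2+4/N}$ the overlap is $R^{O(1/N)}$, but this small loss is harmless for the final $R^{-b}$ bound and is absorbed in the interpolation step just as in Tao's argument.
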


The main result of this subsection reads
\begin{proposition}
	\label{pp_1}
	Let $\Phi_\sigma\in\mathscr{E}_\sigma$ and $(\mathscr{F}^\sigma, \mathscr{G}^\sigma)\in\mathfrak{R}^{\Phi_\sigma,1}_R\times \mathfrak{B}^{\Phi_\sigma,2^k}_R$  be the pair of red and blue waves fixed at the beginning of this section with $\EEE(\mathscr{F}^\sigma)=\EEE(\mathscr{G}^\sigma)=1$.
	There exists a constant $C>0$ depending only on $n,\eps,\sigma_0$ and $q,s,$ in the $Z-$norm such that for any $R\ge 2^{C_0k}$ and $\delta\in(0,1/2)$,
	if $Q_R$ satisfies
	\begin{equation}
	\label{eq:crit}
	\|\mathscr{F}^\sigma\,\mathscr{G}^\sigma\|_{Z(Q_R)}\ge
	\frac{1}{2} \,\overline{A}^\sigma(R),
	\end{equation}
	and we  let $r_\delta$ be the supremum of all radii $r\ge {C_0}2^k$ such that
	\begin{equation}
	\label{eq:en-cr}
	\EEE_{r,C_0Q_R}(\mathscr{F}^\sigma,\mathscr{G}^\sigma)\le 1-\delta
	\end{equation}
	holds and  let $r_\delta={C_0}2^k$ if no such radius exists,
	then  there exists a cube $\widetilde{Q}_{\overline{R}_\delta}$ 	of size  $\overline{R}_\delta\in[ 2^{C_0k},R]$ and $\zzz_\delta\in\R^{n+1}$  such that $\overline{R}_\delta^{\frac{1}{2}+\frac{4}{N}}\le r_\delta$ when $r_\delta\ge 2^{C_0k}$, for which we have
	\begin{equation}
	\label{eq:pp-1}
	\|\mathscr{F}^\sigma \mathscr{G}^\sigma\|_{Z(Q_R)}\le (1-C(\delta+C_0^{-C})^{q})^{-2/q}\|\mathscr{F}^\sigma \mathscr{G}^\sigma\|_{Z(\Omega_{\delta}^{\Phi_\sigma})}+\;2^{CC_0}2^{k\gamma(p,\eps)}\,,
	\end{equation}
	where
	$\Omega_{\delta}^{\Phi_\sigma}:=\widetilde{Q}_{\overline{R}_\delta}\cap \mathbf{ \Lambda}^{\Phi_\sigma}_\delta$ with  $\mathbf{\Lambda}^{\Phi_\sigma}_\delta=\mathbf{\Lambda}^{\Phi_\sigma}_{\mathsf{purple}}(\zzz_\delta,\,C_0(r_\delta+1))$.
\end{proposition}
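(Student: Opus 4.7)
The strategy is to combine the wave-table localization from Proposition \ref{pp:4.1-tao} with a sharp orthogonal decomposition, showing that under the near-saturation hypothesis \eqref{eq:crit} essentially all of the $Z$-mass of $\mathscr{F}^\sigma\mathscr{G}^\sigma$ must reside in a conic neighborhood $\Omega_\delta^{\Phi_\sigma}$ inside a single distinguished subcube of $Q_R$, the contraction coming from the missing fraction of $L^2$-mass off the disk $D$ that witnesses the concentration scale $r_\delta$.

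The construction proceeds in three moves. First, I would fix $\overline{R}_\delta$ dyadic with $\overline{R}_\delta^{1/2+4/N}\le r_\delta<(2\overline{R}_\delta)^{1/2+4/N}$ when $r_\delta>C_0 2^k$, and $\overline{R}_\delta=2^{C_0 k}$ otherwise, then tile $C_0 Q_R$ into subcubes $\widetilde{Q}$ of side $\overline{R}_\delta$. Applying the wave-table construction of Proposition \ref{pp:4.1-tao} on each $\widetilde{Q}$ together with the approximation \eqref{eq:appkC_0} and the disjoint-support orthogonality of Lemma \ref{lem:Te}, one obtains
\begin{equation*}
\|\mathscr{F}^\sigma\mathscr{G}^\sigma\|_{Z(Q_R)}^q\le\sum_{\widetilde{Q}}\|\mathscr{F}^\sigma\mathscr{G}^\sigma\|_{Z(\widetilde{Q})}^q+2^{CC_0 q}\,2^{kq\gamma(q,\varepsilon)},
\end{equation*}
and the saturation hypothesis \eqref{eq:crit} combined with the trivial bound $\|\mathscr{F}^\sigma\mathscr{G}^\sigma\|_{Z(\widetilde{Q})}\le\overline{A}^\sigma(\overline{R}_\delta)\le\overline{A}^\sigma(R)$ singles out one heavy subcube $\widetilde{Q}_{\overline{R}_\delta}$ carrying essentially the full $Z^q$-mass. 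Second, the supremum definition of $r_\delta$ together with \eqref{eq:en-cr} produces a point $\zzz_\delta$ with $t_\delta\in\mathsf{LS}(C_0 Q_R)$ and a disk $D=D(\zzz_\delta,r)$ at scale $r\in(r_\delta,2r_\delta]$ along which $\|\mathscr{F}^\sigma\|_{L^2(D)}\|\mathscr{G}^\sigma\|_{L^2(D)}\ge 1-\delta$; Lemma \ref{lem:Tao10.2} then bounds the energies of $(I-P_D^{\Phi_\sigma})\mathscr{F}^\sigma$ and $(I-P_D^{\Phi_\sigma})\mathscr{G}^\sigma$ by $2\delta+\mathcal{O}(R^{-N})$. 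Third, Lemma \ref{lem:conctr} concentrates $(P_D^{\Phi_\sigma}\mathscr{F}^\sigma)(P_D^{\Phi_\sigma}\mathscr{G}^\sigma)$ pointwise in $\mathbf{\Lambda}^{\Phi_\sigma}_{\mathsf{purple}}(\zzz_\delta,Cr+R^{1/N})\subset\mathbf{\Lambda}^{\Phi_\sigma}_\delta$ modulo rapid spatial decay, while the annular cross terms are estimated on each dyadic annulus $Q^{\mathsf{ann}}(\zzz_\delta;R_j,2R_j)$ by the non-endpoint Lemma \ref{lem:non-edpt}, giving an $R_j^{-b}$ power saving whose sum is absorbed into $2^{CC_0}2^{k\gamma(q,\varepsilon)}$.

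The sharp coefficient $(1-C(\delta+C_0^{-C})^q)^{-2/q}$ in \eqref{eq:pp-1} is the main technical obstacle, since a direct triangle inequality on $Z$ would yield only $(1-C\sqrt{\delta})^{-1}$. To recover the correct exponent I would apply Lemma \ref{lem:Te} to the support-disjoint splitting
\begin{equation*}
\indic_{\widetilde{Q}_{\overline{R}_\delta}}\mathscr{F}^\sigma\mathscr{G}^\sigma=\indic_{\Omega_\delta^{\Phi_\sigma}}\mathscr{F}^\sigma\mathscr{G}^\sigma+\indic_{\widetilde{Q}_{\overline{R}_\delta}\setminus\Omega_\delta^{\Phi_\sigma}}\mathscr{F}^\sigma\mathscr{G}^\sigma,
\end{equation*}
and bound the second summand by the definition of $\overline{A}^\sigma(R)$ applied to $(I-P_D^{\Phi_\sigma})\mathscr{F}^\sigma\cdot\mathscr{G}^\sigma$ and $\mathscr{F}^\sigma\cdot(I-P_D^{\Phi_\sigma})\mathscr{G}^\sigma$. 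Each such piece carries an energy factor at most $(2\delta)^{1/2}+C_0^{-C/2}$; after raising to the $q$-th power and rearranging with respect to the red and blue colour contributions, this produces the factor $(\delta+C_0^{-C})^q$ and the exponent $-2/q$, at which point \eqref{eq:pp-1} follows. Uniformity in $\sigma\in(0,\sigma_0]$ is preserved throughout because all transversality and wave-packet constants in Propositions \ref{pp:KKK}, \ref{pp:4.1-tao} and Lemma \ref{lem:opposite} are uniform in $\sigma$.
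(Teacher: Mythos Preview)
Your proposal has two genuine gaps.

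First, the tiling step in your first move does not work. From $\|\mathscr{F}^\sigma\mathscr{G}^\sigma\|_{Z(Q_R)}\ge\tfrac12\overline{A}^\sigma(R)$ and the trivial bound $\|\mathscr{F}^\sigma\mathscr{G}^\sigma\|_{Z(\widetilde{Q})}\le\overline{A}^\sigma(R)$ on each of the $(R/\overline{R}_\delta)^{n+1}$ subcubes, nothing forces the $Z^q$-mass to concentrate on a single $\widetilde{Q}$; it could be spread evenly. The paper does not tile. In the high-concentration regime $r_\delta\le R^{1/2+4/N}$ it \emph{first} locates $\zzz_\delta$ from the definition of $r_\delta$, then takes $\widetilde{Q}_{\overline{R}_\delta}$ to be the cube of side $\overline{R}_\delta$ \emph{centered at} $\zzz_\delta$, and shows directly that $Q_R\setminus\widetilde{Q}_{\overline{R}_\delta}$ contributes little: the exterior is decomposed into dyadic annuli, Lemma~\ref{lem:non-edpt} handles the pieces carrying a $P_{D^\natural}^{\Phi_\sigma}$ factor, and the doubly delocalized piece $(1-P_{D^\natural}^{\Phi_\sigma})\mathscr{F}^\sigma\,(1-P_{D^\natural}^{\Phi_\sigma})\mathscr{G}^\sigma$ is treated by induction (with a margin repair). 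Proposition~\ref{pp:4.1-tao} is not invoked here at all; it enters only in the proof of Proposition~\ref{pp_2}.

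Second, your decomposition on $\widetilde{Q}_{\overline{R}_\delta}\setminus\Omega_\delta^{\Phi_\sigma}$ cannot yield the factor $(\delta+C_0^{-C})^q$. The cross terms $(1-P_D)\mathscr{F}^\sigma\cdot\mathscr{G}^\sigma$ and $\mathscr{F}^\sigma\cdot(1-P_D)\mathscr{G}^\sigma$ each carry only a \emph{single} small energy factor $\EEE((1-P_D)\mathscr{F}^\sigma)^{1/2}=O(\delta^{1/2})$, so applying $\overline{A}^\sigma(R)$ and raising to the $q$-th power gives at best $\delta^{q/2}$, not $\delta^q$; no ``rearrangement with respect to colours'' fixes this. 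The paper instead writes
\[
\mathscr{F}^\sigma\mathscr{G}^\sigma=(P_{D^\natural}\mathscr{F}^\sigma)\mathscr{G}^\sigma+(1-P_{D^\natural})\mathscr{F}^\sigma\,(P_{D^\natural}\mathscr{G}^\sigma)+(1-P_{D^\natural})\mathscr{F}^\sigma\,(1-P_{D^\natural})\mathscr{G}^\sigma.
\]
On the complement of $\mathbf{\Lambda}_\delta^{\Phi_\sigma}$ the first two terms are $O(C_0^{-C})$ by Lemma~\ref{lem:conctr} (each contains a $P_{D^\natural}$ factor), while the third term, estimated on all of $Q_R$ by the definition of $\overline{A}^\sigma(R)$, is $\lesssim(\delta+C_0^{-C})\,\overline{A}^\sigma(R)$ because \emph{both} energy factors are $O(\delta^{1/2})$. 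After Lemma~\ref{lem:Te} and \eqref{eq:crit} this produces $(\delta+C_0^{-C})^q$ as stated.
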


\subsubsection{ The medium or low concentration case: $r_\delta\ge R^{1/2+4/N}$}
In this case, we show there is a constant $C$ such that for some $\zzz_\delta\in\R^{n+1}$, we have
\begin{equation}
\label{eq:low-med}
\bigl\|\mathscr{F}^\sigma \mathscr{G}^\sigma\bigr\|_{Z(Q_R)}
\le \bigl(1-C(\delta+C_0^{-C})^q\bigr)^{-1/q}\bigl\|\mathscr{F}^\sigma \mathscr{G}^\sigma\bigr\|_{Z(Q_R\cap \mathbf{ \Lambda}^{\Phi_\sigma}_\delta)}
\end{equation}
holds with $C$ independent of $Q_R$ and $\zzz_\delta$. In this case,  $\overline{R}_\delta=R$ and $\widetilde{Q}_{\overline{R}_\delta}=Q_R$.\medskip

By definition, there is $D_\delta=D(\zzz_\delta,r_\delta)$ with  $\zzz_\delta=(x_0,t_0)$ and $t_0\in\mathsf{LS}(C_0Q_R)$, such that we have
\begin{equation}
\label{eq:sup}
\min\Bigl(\|\mathscr{F}^\sigma\|_{L^2(D_\delta)}^2, \|\mathscr{G}^\sigma\|_{L^2(D_\delta)}^2\Bigr)\ge 1-2\delta\,.
\end{equation}
Let $D^\natural=C_0^{1/2}D_\delta=D(\zzz_\delta,\,C_0^{1/2}r_\delta)$ and write
$$
\mathscr{F}^\sigma=P^{\Phi_\sigma}_{D^\natural}\mathscr{F}^\sigma+(1-P_{D^\natural}^{\Phi_\sigma})\mathscr{F}^\sigma,\;\;
\mathscr{G}^\sigma=P_{D^\natural}^{\Phi_\sigma}\mathscr{G}^\sigma+(1-P_{D^\natural}^{\Phi_\sigma})\mathscr{G}^\sigma.
$$
By  Lemma \ref{lem:Te} and the  condition $2^{k\gamma(q,\eps)}\le \overline{A}^\sigma(R)\le 2\,\|\mathscr{F}^\sigma \mathscr{G}^\sigma\|_{Z(Q_R)}$, it suffices to show for some universal constant $C>0$, we have
\begin{equation}
\label{eq:extr-1}
\|(P_{D^\natural}^{\Phi_\sigma}\mathscr{F}^\sigma ) \,\mathscr{G}^\sigma\|_{Z(Q_R\setminus\mathbf{ \Lambda}^{\Phi_\sigma}_\delta)}\lesssim C_0^{-C},
\end{equation}
\begin{equation}
\label{eq:extr-2}
\|(1-P_{D^\natural}^{\Phi_\sigma})\mathscr{F}^\sigma\, P_{D^\natural}^{\Phi_\sigma}\mathscr{G}^\sigma\|_{Z(Q_R\setminus \mathbf{ \Lambda}^{\Phi_\sigma}_\delta)}\lesssim C_0^{-C},
\end{equation}
and
\begin{equation}
\label{eq:extr-3}
\|(1-P^{\Phi_\sigma}_{D^\natural})\mathscr{F}^\sigma \, (1-P^{\Phi_\sigma}_{D^\natural})\mathscr{G}^\sigma\|_{Z(Q_R)}\lesssim (\delta+C_0^{-C})\;\;\overline{A}^\sigma(R).
\end{equation}

To see  \eqref{eq:extr-1} and \eqref{eq:extr-2}, by using energy estimates, we are reduced to
\begin{equation}
\label{eq:PP-huygens}
\|P_{D^\natural}^{\Phi_\sigma}\mathscr{F}^\sigma\|_{L^\infty(Q_R\setminus \mathbf{ \Lambda}^{\Phi_\sigma}_{\delta})}\lesssim_N R^{-N/2},\, \|P_{D^\natural}^{\Phi_\sigma}\mathscr{G}^\sigma\|_{L^\infty(Q_R\setminus \mathbf{ \Lambda}^{\Phi_\sigma}_{\delta})}\lesssim_N R^{-N/2},
\end{equation}
which is obvious in view of Lemma \ref{lem:conctr} and  $r_\delta\ge R^{1/2+4/N}$.

To show \eqref{eq:extr-3}, by the induction argument and  \eqref{eq:en-cr}, \eqref{2.19}, \eqref{2.20} as well as the assumption on $r_\delta$, we have
$$
\EEE\bigl((1-P_{D^\natural}^{\Phi_\sigma})\mathscr{F}^\sigma\bigr)\lesssim \delta+R^{-N/2},\; \;\EEE\bigl((1-P^{\Phi_\sigma}_{D^\natural})\mathscr{G}^\sigma\bigr)\lesssim \delta+R^{-N/2}.
$$
It is clear that $$(1-P^{\Phi_\sigma}_{D^\natural})\mathscr{F}^\sigma\in\mathfrak{R}^{\Phi_\sigma,1}_{R'},\quad (1-P^{\Phi_\sigma}_{D^\natural})\mathscr{G}^\sigma\in\mathfrak{B}^{\Phi_\sigma,2^k}_{R'},$$ with $R'=\frac{R}{(1+o(1))}$. This yields \eqref{eq:extr-3} by finitely partitioning $Q_R$ and using the definition of $A^{\sigma}(R')$ and the monotonicity of $\overline{A}^\sigma(R)$.
The proof is complete.

\subsubsection{The high concentration case: $r_\delta\le  R^{1/2+4/N}$}
We turn to the case where the blue and red waves are highly concentrated.  Define $$\overline{R}_\delta=\max\Bigl(2^{C_0k},\;r_\delta^{1/(1/2+4/N)}\Bigr).$$

Consider the case $\overline{R}_\delta>2^{C_0k}$. In this case, we necessarily have $r_\delta>2^{C_0k/2}$ and there is $\zzz_\delta$ such that we have \eqref{eq:sup}.
Let $\widetilde{Q}=Q^{\zzz_\delta}_{\overline{R}_\delta}$ be the  cube of size $\overline{R}_\delta$ centered at $\zzz_\delta$.
By  splitting $Q_R=\bigl(Q_R\cap \widetilde{Q}\bigr)\cup\bigl(Q_R\setminus \widetilde{Q}\bigr)$ and using Lemma \ref{lem:Te}, we have
$$
\bigl\|\mathscr{F}^\sigma \mathscr{G}^\sigma\bigr\|_{Z(Q_R)}^q
\le \bigl\|\mathscr{F}^\sigma \mathscr{G}^\sigma\bigr\|_{Z(\widetilde{Q})}^q
+\bigl\|\mathscr{F}^\sigma \mathscr{G}^\sigma\bigr\|_{Z(Q_R\setminus \widetilde{Q})}^q.
$$

For the first term on $\widetilde{Q}$, the argument as in the medium or low concentration case leads to an estimate of the form \eqref{eq:low-med} with $Q_R$ there replaced by $\widetilde{Q}$.

For the second term,
write 
$$
\mathscr{F}^\sigma \mathscr{G}^\sigma=\underbrace{\bigl((P^{\Phi_\sigma}_{D^\natural}\mathscr{F}^\sigma)\mathscr{G}^\sigma\bigr)}_{:=I}
+\underbrace{\bigl((1-P^{\Phi_\sigma}_{D^\natural})\mathscr{F}^\sigma \; P^{\Phi_\sigma}_{D^\natural}\mathscr{G}^\sigma\bigr)}_{:=II}+
\underbrace{\bigl((1-P^{\Phi_\sigma}_{D^\natural})\mathscr{F}^\sigma\,(1-P^{\Phi_\sigma}_{D^\natural})\mathscr{G}^\sigma\bigr)}_{:=III}
$$
For $I$ and $II$, dyadic decomposing $Q_R\setminus \widetilde{Q}$
into annuli around $\zzz_\delta$ of the form $Q^{\mathsf{ann}}(\zzz_\delta; 2^{j},2^{j+1})$ with $2^j\gtrsim \overline{R}_\delta$. Taking $C_0$ large and applying Lemma \ref{lem:non-edpt} then summing over dyadic $2^{-jb}$, we are done.

It remains to handle the $III-$term.
Denote
$$
\mathring{\mathscr{F}}^\sigma=(1-P^{\Phi_\sigma}_{D^\natural})\mathscr{F}^\sigma,\quad \mathring{\mathscr{G}}^\sigma=(1-P^{\Phi_\sigma}_{D^\natural})\mathscr{G}^\sigma\,.
$$
Note that $\mathring{\mathscr{F}}^\sigma, \mathring{\mathscr{G}}^\sigma$ are red and blue waves without the relaxed margin conditions required in $\mathfrak{R}^{\Phi_\sigma}_R$ and $\mathfrak{B}^{\Phi_\sigma}_R$. Thus, we can not apply the inductive argument as in the case when $r_\delta\ge R^{1/2+4/N}$. To over come this obstacle, we may use  the same method in  \cite[Appendix B]{Y22}, to which we refer for details.

Switching back and
affording a fixed universal constant, we have
\begin{equation}
\label{eq:ext-en-ind}
\|III\|_{Z(Q_R)}\lesssim\;\delta  \overline{A}^\sigma(R)+2^{O_\eps(C_0)}2^{k\gamma(q,\eps)}
\end{equation}
Thus, by using \eqref{eq:crit} 
$$\|III\|_{Z(Q_R)}\lesssim\;\delta \|\mathscr{F}^\sigma \mathscr{G}^\sigma\|_{Z(Q_R)}+ 2^{O_\eps(C_0)}2^{k\gamma(q,\eps)},$$
Plugging this back we are done.
\medskip

It remains to consider the case $\overline{R}_\delta=2^{2C_0k}$. In this case, the energy is concentrated in a scale $\le 2^{C_0k}$, we use  the small scale estimate at the very beginning of the section to see thatr $\|\mathscr{F}^\sigma\mathscr{G}^\sigma\|_{Z(\widetilde{Q})}\lesssim 2^{k\gamma(q,\eps)}$ with $\eps$ small.\medskip

Collecting all these estimates, we obtain \eqref{eq:pp-1} and the proof of Proposition \ref{pp_1} is complete. \qed

\subsection{End of the proof }

We are ready to show Theorem \ref{pp:sigma-0}. Let $C_1$ and $C_2$ be the structural constants given by Proposition \ref{pp_1} and Proposition  \ref{pp_2} respectively.  We may take $C_1$ large and  take  $\delta=C_0^{-C_1/100}$.

Let $(\mathscr{F}^\sigma, \mathscr{G}^\sigma) \in\mathfrak{R}^\sigma_R\times\mathfrak{B}^{\sigma,2^k}_R$ such that there exists $\Phi_\sigma\in\mathscr{E}_\sigma$ for which we have $\mathscr{F}^\sigma \in\mathfrak{R}^{\Phi_\sigma}_R$ and $\mathscr{G}^\sigma\in\mathfrak{B}^{\Phi_\sigma,2^k}_R$.
For any spacetime cube $Q_R$, if it satisfies the condition \eqref{eq:crit}, we let  $\zzz_\delta, r_\delta,D_\delta$ be given by Proposition \ref{pp_1}. If $\overline{R}_\delta>2^{C_0k}$, then by  definition of $\mathscr{A}^{\Phi_\sigma}(\overline{R}_\delta,r_\delta, C_0r_\delta)$, \eqref{eq:pp-1},  Proposition \ref{pp_2},  \eqref{eq:en-cr}, we get
\begin{multline*}
\|\mathscr{F}^\sigma  \mathscr{G}^\sigma\|_{Z(Q_R)}\\
\le \bigl(1-C_1(\delta+C_0^{-C_1})^{q}\bigr)^{-2/q} (1-\delta)^{1/q'}\Bigl((1+C_22^{-C_0})\overline{A}^{\sigma}(R)+2^{C_2C_0}2^{k\gamma(q,\eps)}\Bigr)\\
+2^{O(C_0)}2^{k\gamma(q,\eps)}.	
\end{multline*}
Using  $q>1$,  and taking $C_0$ large if necessary (depending only on $q, C_1$), one  has  $\exists\;\delta_\mathbf{o}\in (0,1/10)$  and  $0<C_{\mathbf{o}}<\infty$, depending only on $C_0,C_1,C_2$ and $q$, such that
$$
\|\mathscr{F}^\sigma \mathscr{G}^\sigma\|_{Z(Q_R)}\le (1-\delta_\mathbf{o})\,\overline{A}^\sigma(R)+C_\mathbf{o}\,2^{k\gamma(k,\eps)}.
$$

If $\overline{R}_\delta=2^{C_0k}$, then we have
the trivial estimate $\|\mathscr{F}^\sigma\mathscr{G}^\sigma\|_{Z({Q}_R)}\le \, C_\mathbf{o}2^{k\gamma(q,\eps)}$ by the small scale estimate, in view of the proof in the last subsection.

Thus, we have
\begin{align*}
&\max_{Q_R}
\|\mathscr{F}^\sigma \mathscr{G}^\sigma\|_{Z(Q_R)}\\
\le&
\max\Bigl\{\max_{Q_R: \eqref{eq:crit} \text{holds}}
\|\mathscr{F}^\sigma \mathscr{G}^\sigma\|_{Z(Q_R)}, \max_{Q_R: \eqref{eq:crit}\text{ fails}}
\|\mathscr{F}^\sigma \mathscr{G}^\sigma\|_{Z(Q_R)}\Bigr\}+C_\mathbf{o}\, 2^{k\gamma(q,\eps)}\\
\le&\max\Bigl\{(1-\delta_\mathbf{o})\overline{A}^\sigma(R)+{C}_\mathbf{o}2^{k\gamma(k,\eps)},\;\frac{1}{2}\,\overline{A}^\sigma(R)\Bigr\}+C_\mathbf{o}2^{k\gamma(q,\eps)}\\
\le& (1-\delta_\mathbf{o})\overline{A}^\sigma(R)+2{C}_\mathbf{o}\,2^{k\gamma(q,\eps)}\,.
\end{align*}
Noting the right side is independent of $(\mathscr{F}^\sigma, \mathscr{G}^\sigma) \in\mathfrak{R}^\sigma_R\times \mathfrak{B}^{\sigma,2^k}_R$,
we  get
$$
A^\sigma(R)\le (1-\delta_\mathbf{o})\overline{A}^\sigma(R)+2{C}_\mathbf{o} 2^{k\gamma(q,\eps)}.
$$
Note that  $\overline{A}^\sigma(R)$  is finite for each $\sigma,R$.
Taking suprema, we obtain
$$
\overline{A}^\sigma(R)\le \frac{2{C}_\mathbf{o}} {\delta_\mathbf{o}} \,2^{k\gamma(q,\eps)}.
$$
Since  the right side is uniform with respect to $\sigma\le \sigma_0$,
the proof is thus complete by taking suprema over $\sigma\in (0,\sigma_0]$.\qed

\begin{remark}
	To get Proposition \ref{pp:sigma-1}, the above same proof
	works equally well
	based on the the  wave tables in \cite{TaoMZ}, with modifications in the interpolation argument to meet the mixed-norms.
\end{remark}

\section{Endpoint  bilinear restriction estimates on the sphere concerning a  conjecuture by Foschi and Klainerman }\label{sect:furtherrks}

Let $n\ge 2$ and $\mathbb{S}=\mathbb{S}^{n-1}=\{\xi\in\R^n:|\xi|=1\}$
be the unit sphere in $\R^n$. Define the Stein operator to be
the adjoint of the Fourier restriction operator
$$
Sf(x)=\int_{\mathbb{S}}e^{ix\cdot\xi}f(\xi)\,d\sigma(\xi),
$$
with $d\sigma$ being the surface measure.
It is conjecured in \cite{FoKl} that \footnote{Conjecture 17.2} if
we let $\Omega_1,\Omega_2$ be two disjoint compact subsets of $\mathbb{S}$ such that
$$
\mathsf{dist}(\Omega_1,\Omega_2)>0,\;
\mathsf{dist}(\Omega_1,-\Omega_2)>0\;,
$$
then for every $p\ge\frac{n+2}{n}$, there is 
a finite constant $C=C_{\Omega_1,\Omega_2,p}>0$ such that
\begin{eqnarray}\label{eq:FK107}
\bigl\| Sf\cdot Sg\bigr\|_{L^p(\R^n)}\le C \|f\|_{L^2(\Omega_1)}\|g\|_{L^2(\Omega_2)}
\end{eqnarray}
holds for all $f$ supported in $\Omega_1$, and $g$ supported in $\Omega_2$.\medskip

Nowadays, this question is usually referred as the  bilinear restriction theorems on elliptic type surfaces. In \cite{TaoGFA}, the sharp
result was established by  Tao for all $p>1+\frac{2}{n}$, except the endpoint. The purpose of this section is to invest a possible way of combining the method 
of descent with the argument for Theorem \ref{pp:sigma-0} to get  the endpoint case.
To illustrate the idea, we assume $\Omega_1,\Omega_2$  are contained in a small neighbourhood of the north pole $e_n=(0,\ldots,0,1)$. By modulating the integrand in the $L^p-$norm  on the left side of \eqref{eq:FK107}, it is natural to consider the
Monge function
 $\Psi(\xi')=\sqrt{1-|\xi'|^2}-1$ and
introducing the $\lambda-$dependent operator
$$
S^\lambda_{\Omega} f(x,t)=\int_{-1}^1\int_{\Omega}e^{i(x'\cdot\xi'+ts+x_n\frac{\Psi(\xi')}{\lambda+s})}f(\xi',s)d\xi' ds, 
$$
where $t,s$ are auxiliary variables.

Taylor expand $\Psi(\xi')=\frac{|\xi'|^2}{2}+\mathcal{ E}(\xi')$ for $|\xi'|\ll 1$ and look at the surface
$$
\varSigma^\lambda:=\{(\xi',s,\,\Psi(\xi')/(\lambda+s)):|\xi'|\ll 1, |s|\le 1\} 
$$
Ignoring the error term  $\frac{\mathcal{ E}(\xi')}{\lambda+s}$,
the main term is a paraboloid as in \cite{Y22}, so that 
normal directions at each point of $\varSigma^\lambda$ should be almost contained in a conic surface due to the main term. To justify this rigorously, it remains to 
 control the perturbation term. To this end, one may introduce a similar class of functions including  $\frac{\Psi(\xi')}{\lambda+s}$ as a special case such that the stabiliy result holds  as $\lambda\to+\infty$

 Let $I=[-1,1]$ and $\mathscr{ C}^\lambda(R)$ be the optimal constant $C$ for which 
 $$
 \|S^\lambda_{\Omega_1}f\cdot S^\lambda_{\Omega_2}g\|_{L^p(Q^\lambda_R)}\le C \lambda^{1/p} \|f\|_{L^2(\Omega_1\times I)}\|g\|_{L^2(\Omega_2\times I)}
 $$
 for all $f,g$ and all $\lambda-$stretched $R$-cube $Q^\lambda_R\subset \R^{n+1}_{x,t}$. Here, by stretch we mean multiplying $\lambda$ to the side along $x_n-$direction.
 Construct the wave tables 
  and use bootstrap under the condition $R\le\lambda$ ultimately to get
 $\mathscr{ C}^\lambda(R)\lesssim 1$ for all $R\le \lambda$.

Let $\lambda=R$.
Changing variables, and letting $R\to\infty$ using Lebesgue's dominated convergence theorem and then Fatou's lemma, we obtain after integrating $t$ out

\begin{theorem}
	Under the above conditions on $\Omega_1,\Omega_2$, \eqref{eq:FK107} holds for all $p\ge \frac{n+2}{n}$.
\end{theorem}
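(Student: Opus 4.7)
The plan is to realize \eqref{eq:FK107} at the critical exponent $p=(n+2)/n$ as a limit of uniform bilinear estimates for a one-parameter family of Fourier extension operators whose phases are small perturbations of the paraboloid. After a spatial rotation and a partition of unity, I may assume $\Omega_1,\Omega_2$ lie in a small neighbourhood of $e_n$ and work with the operator $S^\lambda_\Omega$ defined in the excerpt. Expanding $\Psi(\xi')=|\xi'|^2/2+\mathcal{E}(\xi')$ with $\mathcal{E}(\xi')=O(|\xi'|^4)$, the phase of $S^\lambda_\Omega$ differs from the paraboloid phase $x'\cdot\xi'+ts+x_n|\xi'|^2/2$ only by $x_n\mathcal{E}(\xi')/(\lambda+s)$, which together with all its derivatives in $(\xi',s)$ is $O(1/\lambda)$ for $|s|\le 1$. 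This places the extra variable $s$ as a regular parameter and puts the resulting phase in an $(n+1)$-variable analogue of $\mathscr{E}_\sigma$ with effective parameter $\sigma^2\sim 1/\lambda$, so that the graphs $\varSigma^\lambda$ converge to the paraboloid as $\lambda\to\infty$ while retaining Kakeya transversality between the two separated pieces over $\Omega_1$ and $\Omega_2$.

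Next I would set up $\mathscr{C}^\lambda(R)$ as the best constant in the bilinear $L^p(Q^\lambda_R)$ inequality on the $\lambda$-stretched $R$-cubes $Q^\lambda_R$ adapted to the anisotropy introduced by the $\lambda$-factor in $x_n$, and prove the uniform bound $\mathscr{C}^\lambda(R)\lesssim 1$ for every dyadic $R\le\lambda$. The argument parallels Sections \ref{sect:w-p-d}--\ref{sec:pf-sigma-0}: install the wavepacket decomposition at scale $\varrho=R^{1/2}$ provided by Lemma \ref{lem:wp-d}, construct red/blue wave tables as in Proposition \ref{pp:4.1-tao} using the Huygens law for $S^\lambda$ (the analogue of Proposition \ref{pp:KKK}), establish the no-waste bilinear $L^2$-Kakeya estimate in the present geometry, propagate the persistence of non-concentration of energy à la Proposition \ref{pp:persist}, and close the induction by combining Proposition \ref{pp_1} with Proposition \ref{pp_2}. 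Since the phase belongs uniformly to the perturbed class, all structural constants depend only on $\Omega_1,\Omega_2$ and $p$, not on $\lambda$, and a bootstrap over scales identical to Section \ref{sec:pf-sigma-0} yields the uniform bound.

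With the uniform estimate in hand, specialize $\lambda=R$ and let $R\to\infty$. Changing variables to undo the $\lambda$-stretching, the $s$-integration becomes a convolution in $t$ by a fixed Schwartz kernel coming from the Fourier dual of $\mathbf{1}_I(s)$, whose integral over $\R$ is a nonzero constant; hence for tensorized data $f(\xi')\otimes \varphi(s)$ the quantity $S^\lambda_\Omega f\cdot S^\lambda_\Omega g$ converges pointwise to a fixed multiple of $(Sf)(Sg)$ outside a null set. Applying Fatou's lemma to the left-hand $L^p$-norm on the exhausting boxes $Q^\lambda_R$ and dominated convergence on the data side delivers \eqref{eq:FK107} at $p=(n+2)/n$; the range $p>(n+2)/n$ follows by interpolation with the trivial estimate at $p=\infty$ and is in any case covered by \cite{TaoGFA}.

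The hard part will be verifying the wave-table machinery for the $\lambda$-dependent phase, because neither the Lorentz invariance nor the exact homogeneity of the cone is available. One has to control how the perturbation $\mathcal{E}(\xi')/(\lambda+s)$ distorts the $(\Phi_\sigma,\varrho)$-tube geometry, the relaxed margin condition under iteration of $\mathfrak{F}_{C_0}$, and the transversality of the normal fields of the two separated pieces throughout the induction. The decisive constraint is $R\le\lambda$, which guarantees that the spatial displacement $R/\lambda$ produced by the perturbation along a tube of length $R$ is harmless at the wave-packet scale; preserving this condition through every step of the induction-on-scales, and checking that the no-waste bilinear Kakeya estimate (Proposition \ref{pp:C-0 quilt}) retains constants uniform in $\lambda$ once one descends to the $\varrho$-tubes, will be the main technical obstacle.
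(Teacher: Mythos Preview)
Your proposal follows essentially the same route as the paper's Section~\ref{sect:furtherrks}: the method of descent via the auxiliary operator $S^\lambda_\Omega$, the uniform bound $\mathscr{C}^\lambda(R)\lesssim 1$ for $R\le\lambda$ obtained by running the wave-table machinery of Sections~\ref{sect:w-p-d}--\ref{sec:pf-sigma-0} for the perturbed paraboloid phase, and the passage to the limit $\lambda=R\to\infty$ using Fatou and dominated convergence. The paper's own argument is itself a sketch at the same level of detail as yours, and your identification of the crucial constraint $R\le\lambda$ and of the uniformity of the structural constants matches the paper's reasoning.
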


\end{document}